\newcommand\ReDeclareMathOperator{%
  \@ifstar{\def\rmo@s{m}\rmo@redeclare}{\def\rmo@s{o}\rmo@redeclare}%
}
\newcommand\rmo@redeclare[2]{%
  \begingroup \escapechar\m@ne\xdef\@gtempa{{\string#1}}\endgroup
  \expandafter\@ifundefined\@gtempa
     {\@latex@error{\noexpand#1undefined}\@ehc}%
     \relax
  \expandafter\rmo@declmathop\rmo@s{#1}{#2}}
\newcommand\rmo@declmathop[3]{%
  \DeclareRobustCommand{#2}{\qopname\newmcodes@#1{#3}}%
}
\newtheorem{theorem}{Theorem}
\newtheorem{lemma}[theorem]{Lemma}
\newtheorem{prop}[theorem]{Proposition}
\newtheorem{proposition}[theorem]{Proposition}
\newtheorem{assumption}[theorem]{Assumption}
\newcommand{\vertiii}[1]{{\left\vert\kern-0.25ex\left\vert\kern-0.25ex\left\vert #1 
    \right\vert\kern-0.25ex\right\vert\kern-0.25ex\right\vert}}
\newcommand{\aut}{{\rm aut}}
\newcommand{\ptw}{{\rm ptw}}
\newcommand{\E}{\mathbb{E}}
\newcommand{\R}{\mathbb{R}}
\newcommand{\Z}{\mathbb{Z}}
\newcommand{\cF}{\mathcal{F}}
\newcommand{\cW}{\mathcal{W}}
\newcommand{\eps}{\varepsilon}
\newcommand{\e}{\epsilon}
\DeclareMathOperator{\supp}{supp}
\DeclareMathOperator{\tod}{\xrightarrow{~d~}}
\DeclareMathOperator{\toas}{\xrightarrow{a.s.}}
\DeclareMathOperator{\Lip}{Lip}
\ReDeclareMathOperator{\Re}{Re}
\ReDeclareMathOperator{\Im}{Im}
\renewcommand{\nabla}{D}
\newcommand{\ochi}{{\overline\chi}}
\newcommand{\Chi}{\mathcal{X}}
\numberwithin{equation}{section}
\numberwithin{theorem}{section}
\def\beq{\begin{equation}}
\def\eeq{\end{equation}}
\Crefname{assumption}{Assumption}{Assumptions}
\Crefname{theorem}{Theorem}{Theorems}
\Crefname{lem}{Lemma}{Lemmas}
\Crefname{cor}{Corollary}{Corollaries}
\Crefname{prop}{Proposition}{Propositions}
\Crefname{theorem}{Theorem}{Theorems}
\Crefname{conjecture}{Conjecture}{Conjectures}
\begin{document}

\title{Brownian fluctuations of flame fronts with small random advection}

\setlength\thanksmarkwidth{.5em}
\setlength\thanksmargin{-\thanksmarkwidth}

\author{
	Christopher Henderson\thanks{Corresponding author, Department of Mathematics, The University of Chicago, 5734 S.~University Avenue, Chicago, IL 60637, E-mail: \texttt{henderson@math.uchicago.edu}} 
\ and	
	Panagiotis E.~Souganidis\thanks{Department of Mathematics, The University of Chicago, 5734 S.~University Avenue, Chicago, IL 60637, E-mail: \texttt{souganidis@math.uchicago.edu}}
}

\maketitle
\begin{abstract}
\noindent We study the effect of small random advection in two models in turbulent combustion.  Assuming that the velocity field decorrelates sufficiently fast, we (i) identify the order of the fluctuations of the front with respect to the size of the advection, and (ii) characterize them by the solution of a Hamilton-Jacobi equation forced by white noise.  In the simplest case, the result yields, for both models, a front with Brownian fluctuations of the same scale as the size of the advection.  That the fluctuations are the same for both models is somewhat surprising, in view of known differences between the two models.
\end{abstract}
%


\section{Introduction}

We are interested in the rigorous understanding of the effect of a small random advective term, which varies on large scales, on the asymptotic behavior of two types of fronts arising in turbulent combustion, population dynamics, and various other physical systems, which in the absence of advection yield the same front.


\medskip

The first model is the so-called G-equation.  It is a positively homogeneous of degree one Hamilton-Jacobi equation used to describe front propagation governed by Huygen's principle.  In its simplest form, that is without advection, the G-equation yields fronts moving with constant normal velocity. 
The G-equation 
is derived as a simplified model when the advection varies on an integral length scale.  

\medskip

The second model is an eikonal equation that is related to a turbulent reaction-diffusion equation. 
The combined effects of reaction, advection, and diffusion yield complex behavior, including the failure of Huygen's principle, that has drawn significant mathematical interest.

\medskip

There is a long history of developing and using simplified models for turbulent combustion; we refer the reader to the book of Williams~\cite{Williams}, the introduction of the work by Majda and Souganidis~\cite{MajdaSouganidis}, and references therein.  In~\cite{MajdaSouganidis}, the authors develop a mathematically rigorous framework to understand the connection between the advective reaction-diffusion models and the G-equation.  One of the conclusions is that, when the advection varies on large length scales, the front asymptotics may be different, see~\cite[Appendix~B]{MajdaSouganidis}.

\medskip

In~\cite{MayoKerstein}, Mayo and Kerstein study small advection perturbations of the G-equation and formally obtain that the correction of the front location is given by a Hamilton-Jacobi equation forced by one-dimensional (in the direction of the front) white noise.

\medskip

Here, we provide a rigorous mathematical justification of this result.  
 In addition, we study the asymptotics of the second model, that is, the eikonal equation.

\medskip

A somewhat surprising conclusion is that these two models have the same highest-order asymptotics and first-order correction.  In particular, the result implies that the disparity found in~\cite{MajdaSouganidis} is a large-advection phenomenon.

\medskip

We next describe the setting. 
We work in $\R^n$ with $n\geq 2$ and denote elements as $(x,y)$ with $x \in \R^{n-1}$ and $y\in \R$.  We also write $(x,\xi)$ for elements of $\R^n$ with $x\in \R^{n-1}$ and $\xi \in \R$, when $\xi$ plays the role of a ``slow variable.''  Finally, we set $\R_\pm := \{y \in \R: 0 < \pm y < \infty\}$.

\medskip

For our results, we require an appropriate smooth approximation of white noise, often referred to as mild white noise, which we denote by $w$.  The precise definition and assumptions are given in \Cref{sec:results}.  Here, we only remark that, if $w$ is mild white noise, then, as $\e \to 0$, $\e^{-1}\int_0^y w(z/\e^2) dz$ converges in distribution to a Brownian motion.


\medskip

The random advection whose effect we investigate is
\[
	u(x,y,t) = (u_\perp(x,y,t), u_\parallel(x,t) w(y)),
\]
where $u_\perp$ and $u_\parallel$ are smooth and bounded.  We study fronts that, on average, propagate in the $y$-direction, so that $u_\perp$ and $u_\parallel w$ are the perpendicular and parallel advective forces respectively.

\medskip

To state the results, we define two objects that will be of considerable importance to our study since they provide the correction due to the small advection.  For a fixed standard one-dimensional Brownian motion $W$, we consider the stochastic Hamilton-Jacobi equation
\begin{equation}\label{eq:corrector}
 	\begin{cases}
 		d \chi + \frac{1}{2} |\nabla_x \chi|^2 d\xi = - u_\parallel(\xi,0) dW(\xi) \qquad &\text{ in }~~ \R^{n-1} \times \R_+,\\
 		\chi = 0 &\text{ on }~~ \R^{n-1}\times\{0\}.
	\end{cases}
\end{equation}
and its viscous counterpart
\begin{equation}\label{eq:corrector_viscous}
 	\begin{cases}
 		d\chi_{\rm visc} + \left(\frac{1}{2} |\nabla_x \chi_{\rm visc}|^2  - \frac{1}{2} \Delta_x \chi_{\rm visc}\right) d\xi = - u_\parallel(\xi,0) d W(\xi) \qquad &\text{ in }~~ \R^{n-1}\times\R_+,\\
 		\chi_{\rm visc} = 0 &\text{ on }~~ \R^{n-1}\times\{0\}.
	\end{cases}
\end{equation}
%
%
%
Because of the lack of regularity of $dW$ in~\eqref{eq:corrector} and~\eqref{eq:corrector_viscous}, the classic notion of viscosity solution is not applicable here.  At the end of \Cref{sec:results}, we explain how to make sense of~\eqref{eq:corrector} and~\eqref{eq:corrector_viscous}.

\medskip

Next, we introduce the models and describe the results.

\subsubsection*{The G-equation}

We fix $\alpha \geq 1$ and consider the initial value problem 
\begin{equation}\label{eq:geometric}
	\begin{cases}
		G^\epsilon_t + \e u(x,y,\e^\alpha t) \cdot \nabla G^\epsilon + |\nabla G^\epsilon| = 0 \quad &\text{ in }~~ \R^n\times \R_+,\\
		G^\epsilon = G_0 & \text{ on }~~ \R^n \times \{0\},
	\end{cases}
\end{equation}
where $G_0$ is a ``front-like'' initial datum (see \Cref{assumption:initial_datum}), the simplest example being $G_0(x,y) = y$.  We are interested in the evolution of the``front,'' that is, the $0$-level set of $G^\e$ at time $t$, which we denote $\Gamma_t(G^\e)$.  We note that, if $\e=0$ and $G_0(y) = y$, then $G^0(x,y,t) = y-t$ solves~\eqref{eq:geometric}, and its front at time $t$ is given by $\Gamma_t(G^0) = \{(x,y): y = t\}$.  Our goal is to understand in what way it is approximated by the front of $G^\e$.

\medskip

The case $\alpha = \infty$ is allowed, and the convention is that $\e^\infty = 0$.

\medskip

The first result is stated informally in the following theorem.  The precise statements are given  in \Cref{thm:geometric_ti} and \Cref{prop:geometric_td}.
\begin{theorem}\label{thm:rough_geometric}
	If $G^\e$ solves~\eqref{eq:geometric} and $G_0$ is front-like, then
	\[
		\Gamma_t(G^\e)
			= \{(x,y)\in \R^n : y + \e^{2/3} \chi^\e \left( x, \e^{2/3} y, \e^{2/3} t\right) = t\},
	\]
	 where, as $\e\to0$, $\chi^\e$ converges in distribution to the solution $\chi$ of~\eqref{eq:corrector}.
\end{theorem}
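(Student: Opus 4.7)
The plan is to proceed perturbatively around the unperturbed solution $G^0 = y - t$, combining a matched asymptotic expansion with a Brownian-scale central limit theorem for the integrated mild noise $w$. I define $\chi^\e$ implicitly via
\[
G^\e(x,y,t) = y - t + \e^{2/3}\,\chi^\e(x,\, \e^{2/3}y,\, \e^{2/3}t),
\]
so that the front representation claimed in the theorem follows immediately from the level-set identity $\Gamma_t(G^\e) = \{G^\e = 0\}$. The exponent $2/3$ is dictated by the balance between the amplitude $\e$ of the advection and the Brownian scaling of the integrated noise: at the slow scale $y \sim \e^{-2/3}$ one heuristically expects $\e\int_0^y u_\parallel\, w(z)\,dz \sim \e \cdot y^{1/2} \sim \e^{2/3}$.

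Substituting the ansatz into~\eqref{eq:geometric} and Taylor expanding $|\nabla G^\e|$ around the reference gradient $(0,\dots,0,1)$ yields, after dividing by $\e^{4/3}$, a rescaled Hamilton--Jacobi equation of the form
\begin{equation*}
\chi^\e_\tau + \chi^\e_\eta + \tfrac{1}{2}|\nabla_x \chi^\e|^2 + \e^{-1/3}u_\parallel(x,\e^{\alpha-2/3}\tau)\,w(\e^{-2/3}\eta) + \e^{1/3}u_\perp \cdot \nabla_x \chi^\e = O(\e),
\end{equation*}
where $(\eta,\tau) := (\e^{2/3}y, \e^{2/3}t)$ are the slow variables. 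The invariance principle for $w$ mentioned in \Cref{sec:results} guarantees that $-\e^{-1/3}w(\e^{-2/3}\eta)\,d\eta$ converges (in distribution, after a suitable mollification) to $-dW(\eta)$; for $\alpha \geq 1$ the coefficient $u_\parallel(x, \e^{\alpha-2/3}\tau)$ further collapses to its value at $t=0$. On the front, $\eta \approx \tau$, so the transport structure $\chi^\e_\tau + \chi^\e_\eta$ degenerates to a single slow variable $\xi$, recovering~\eqref{eq:corrector}.

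The rigorous implementation would proceed in three steps. First, uniform (in $\e$) a priori estimates on $\chi^\e$---an $L^\infty$ bound and a spatial Lipschitz bound---obtained by combining finite-speed propagation for~\eqref{eq:geometric} with CLT-type control on the noise integrals $\e\int_0^y u_\parallel\, w\,dz$. Second, passage to the limit using the Lions--Souganidis stochastic viscosity solution framework: one mollifies the Brownian forcing, solves the approximated HJ equation in the classical viscosity sense, and invokes the pathwise stochastic stability. Third, identification of the limit with the unique solution of~\eqref{eq:corrector}, yielding convergence in distribution. The principal obstacle is the interaction between the nonlinear Hamiltonian $\tfrac{1}{2}|\nabla_x \chi^\e|^2$ and the singular forcing $dW$: linear CLT arguments alone do not suffice, and one must rely on the stochastic viscosity stability theory (or, equivalently, an explicit pathwise transformation that absorbs the noise into a drift) in order to preserve the nonlinear structure in the limit.
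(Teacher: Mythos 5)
Your heuristic ansatz, the identification of the $\e^{2/3}$ scaling, and the rescaled Hamilton--Jacobi equation are all correct in spirit; the paper also uses precisely this formal computation as motivation, and the observation that along the front $\eta \approx \tau$, so that the transport operator $\partial_\tau + \partial_\eta$ collapses to a single slow variable, is exactly the change-of-variables trick the paper uses to handle $\alpha = 1$. However, the overall structure of your argument has a genuine gap at its starting point. You define $\chi^\e$ implicitly from the \emph{general} IVP solution $G^\e$ via $G^\e = y - t + \e^{2/3}\chi^\e(x, \e^{2/3}y, \e^{2/3}t)$, and claim the level-set identity then trivially gives the front representation. But with this definition, $\chi^\e(x,\xi,0) = \e^{-2/3}\bigl(G_0(x,\e^{-2/3}\xi) - \e^{-2/3}\xi\bigr)$, which for an arbitrary front-like $G_0$ satisfying only \Cref{assumption:initial_datum} (e.g., $G_0(x,y) = 2y$) blows up as $\e \to 0$. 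None of the uniform a priori bounds you propose in Step 1 can hold, because the initial datum for $\chi^\e$ is not $O(1)$, so the passage to the limit in Steps 2--3 is not available.

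The paper resolves this by splitting the theorem into two separate claims, which is the essential idea you are missing. First (\Cref{thm:geometric_ti}), one constructs a \emph{special} solution $G^\e_\ptw = y - t + \e^{2/3}\chi^\e(x,\e^{2/3}y,\e^{2/3}t)$ --- the ``perturbed traveling wave'' --- whose corrector $\chi^\e$ \emph{does} have uniformly controlled initial datum; this is built via the ansatz $G^\e_\ptw = \rho^\e - t$ where $\rho^\e$ solves the metric planar problem \eqref{eq:general_metric_problem}, and the key technical input is the sharp barrier estimate \eqref{eq:bounds_rho}, obtained by an explicit sub/super-solution construction (your ``finite-speed propagation + CLT control'' is far too coarse for this). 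Second (\Cref{prop:geometric_td}), one exploits the positive homogeneity of degree one of the G-equation and the level-set method: by composing $G^\e_\ptw$ with monotone functions built from $\underline G, \overline G$, one traps $G^\e$ between sub- and super-solutions that share the exact same zero level set as $G^\e_\ptw$, whence $\Gamma_t(G^\e) = \Gamma_t(G^\e_\ptw)$. The $\chi^\e$ appearing in the theorem statement is the one associated to $G^\e_\ptw$, not the one you would read off from $G^\e$ itself. Finally, your Step 2 also glosses over a real obstacle: half-relaxed limits require pointwise convergence, while $W^\e \to W$ only in distribution; the paper invokes a Skorokhod-type representation (\cite[Theorem~4.6, Chapter 1]{IkedaWatanabe}) to pass to an a.s.-convergent copy before taking half-relaxed limits, a reduction you do not address.
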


\subsubsection*{The eikonal equation }

The second model is
\begin{equation}\label{eq:FKPP}
\begin{cases}
	v_t^\e + \e u(x,y, \e^\alpha t) \cdot \nabla v^\e + \frac{1}{2} |\nabla v_x^\e|^2 + \frac{1}{2} = \frac{\e^\beta}{2} \Delta v^\e \qquad &\text{ in }~~ \R^n \times \R_+,\\
	v^\e = v_0 & \text{ on }~~ \R^n \times \{0\},
\end{cases}
\end{equation}
where $v_0$ is front-like.  For the sake of completeness, we describe the connection of~\eqref{eq:FKPP} to a turbulent reaction-diffusion equation.  A simple calculation yields that $T^\e(x,y,t) := \exp\{ - \e^{-\beta} v^\e(\e^{\beta} x, \e^\beta y, \e^\alpha t)\}$ solves
\begin{equation}\label{eq:FKPP_linearized}
	T_t^\e + u \cdot \nabla T^\e
		= \frac{1}{2} \Delta T^\e + \frac12 T^\e.
\end{equation}
The front of $T^\e$ is the area where it transitions from $T^\e \approx 0$ to $T^\e \approx O(1)$.  It is clear from the relationship between $T$ and $v$ that the two uses of the term ``front'' are consistent.  When $u \equiv 0$, the front of $T$ is approximately the same as those of solutions of the Fisher-KPP equation, which is sometimes used as a model for combustion.

\medskip

Our second result is stated informally in the following theorem.  The precise statement can be found in \Cref{thm:FKPP} and \Cref{prop:ivp}.
\begin{theorem}\label{thm:rough_FKPP}
If $v^\e$ solves~\eqref{eq:FKPP} and $v_0$ is front-like, then
	\[
		\Gamma_t(v^\e) \approx
			\{(x,y) \in \R^n: y + \e^{2/3} \chi^\e(x, \e^{2/3} y, \e^{2/3} t) = t\},
	\]
	where, as $\e\to0$, $\chi^\e$ converges in distribution to the solution $\chi$ of~\eqref{eq:corrector} when $\beta > 2/3$ and to the solution $\chi_{\rm visc}$ of~\eqref{eq:corrector_viscous} when $\beta = 2/3$. 
\end{theorem}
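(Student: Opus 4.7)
The natural approach is to perform the change of unknown
\[
v^\e(x,y,t)=y-t+\e^{2/3}\chi^\e\bigl(x,\e^{2/3}y,\e^{2/3}t\bigr)
\]
and derive an equation for $\chi^\e$ in the rescaled coordinates $(x,\xi,s)=(x,\e^{2/3}y,\e^{2/3}t)$. Substituting into~\eqref{eq:FKPP}, one has $v^\e_t=-1+\e^{4/3}\chi^\e_s$, $v^\e_y=1+\e^{4/3}\chi^\e_\xi$, $\nabla_x v^\e=\e^{2/3}\nabla_x\chi^\e$, and $\Delta v^\e=\e^{2/3}\Delta_x\chi^\e+\e^2\chi^\e_{\xi\xi}$. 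The $O(1)$ identity $-1+\tfrac12+\tfrac12=0$ cancels, and dividing the remainder by $\e^{4/3}$ yields schematically
\begin{equation*}
\chi^\e_s+\chi^\e_\xi+\tfrac12|\nabla_x\chi^\e|^2+\e^{-1/3}u_\parallel\bigl(x,s/\e^{2/3}\bigr)\,w\bigl(\xi/\e^{2/3}\bigr)=\tfrac{\e^{\beta-2/3}}{2}\Delta_x\chi^\e+o(1).
\end{equation*}
The transport combination $\partial_s+\partial_\xi$ encodes the unit normal speed of the unperturbed front, and the prefactor $\e^{\beta-2/3}$ on the Laplacian isolates precisely the threshold $\beta=2/3$ at which diffusion survives in the limit.

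The principal difficulty is the singular forcing $\e^{-1/3}u_\parallel w(\xi/\e^{2/3})$, which is pointwise unbounded but whose $\xi$-antiderivative $B^\e(\xi):=\e^{-1/3}\int_0^\xi w(z/\e^{2/3})\,dz$ converges in distribution to a standard Brownian motion $W$ by the mild white noise hypothesis recalled in~\Cref{sec:results}. To absorb it, I would introduce the shifted unknown $\widetilde\chi^\e:=\chi^\e+u_\parallel B^\e$, with small corrections to handle the $x$- and $s$-dependence of $u_\parallel$ and the time scaling $\e^\alpha$. The equation satisfied by $\widetilde\chi^\e$ then contains only bounded coefficients plus terms polynomial in $B^\e$, which is tight; viscosity comparison delivers $\e$-independent Lipschitz bounds in $x$ and along characteristics of $\partial_s+\partial_\xi$. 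A half-relaxed-limit / perturbed test function argument then extracts a subsequential limit. Restricting to the characteristic $\xi=s$ that tracks the front location, the evolution collapses to a single-parameter $\xi$-equation matching~\eqref{eq:corrector} when $\beta>2/3$ and~\eqref{eq:corrector_viscous} when $\beta=2/3$, interpreted in the stochastic HJ sense explained at the end of~\Cref{sec:results}; uniqueness in that framework promotes subsequential convergence to full convergence in distribution. That the answer coincides with~\Cref{thm:rough_geometric} is then transparent, since the Hamiltonians $|p|$ and $\tfrac12|p|^2+\tfrac12$ agree to first order at the normal direction $p=(0,1)$.

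The main obstacle is the rough forcing itself: the $\e^{-1/3}$ blow-up lies outside classical viscosity stability, and the limit equations are driven by nowhere-differentiable noise, so both the $\e$-equation and the limit must be handled within the generalized framework of~\Cref{sec:results}. A related subtlety is that the quadratic term $\tfrac12|\nabla_x\chi^\e|^2$ becomes, under the shift, $\tfrac12|\nabla_x\widetilde\chi^\e-B^\e\nabla_x u_\parallel|^2$, whose cross and square pieces must be passed to the limit via the joint weak convergence of $(\widetilde\chi^\e,B^\e)$; this is where the multiplicative structure $u_\parallel(x,t)w(y)$ with scalar $w$ is essential. Finally, for $\beta=2/3$ the $\tfrac12\Delta_x$ term has to be carried uniformly through the perturbed test function construction, which forces a slightly stronger form of the comparison estimate than in the inviscid case $\beta>2/3$.
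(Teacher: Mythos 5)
Your computational outline for the perturbed traveling wave (substituting the ansatz, identifying the $\e^{4/3}$ cancellation, shifting by $u_\parallel B^\e$ to turn the singular forcing into a bounded coefficient, half-relaxed limits, and restricting to the characteristic $\xi=\tau$) does capture the core mechanism the paper uses in \Cref{thm:FKPP} and \Cref{prop:general_td}. However, there are three genuine gaps.

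First and most important, your plan substitutes $v^\e=y-t+\e^{2/3}\chi^\e$ with $\chi^\e$ bounded directly into~\eqref{eq:FKPP}, but this ansatz is only valid for a \emph{special} solution, not for the solution with a general front-like $v_0$. The theorem is a statement about $\Gamma_t(v^\e)$ for arbitrary admissible $v_0$, and the eikonal Hamiltonian is not positively homogeneous of degree one, so the level-set bootstrap that works for the G-equation (composition with a monotone $\phi$) is not available. The paper deals with this in \Cref{prop:ivp} by a completely separate comparison argument: $G^\e$ is a super-solution of~\eqref{eq:FKPP}, which combined with \Cref{prop:geometric_td} gives $\{G^\e_\ptw\le 0\}\subset\{v^\e\le 0\}$, and the maximum principle together with the extra hypothesis $v_0\ge v^\e_\ptw(\cdot,\cdot,0)$ gives $\{v^\e\le 0\}\subset\{v^\e_\ptw\le 0\}$. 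The precise meaning of ``$\approx$'' in the theorem is exactly this two-sided inclusion, not the equality you implicitly assume; the paper even exhibits an initial datum $v_0(x,y)=y/\mu$ with $\mu>1$ for which~\eqref{eq:sub-level} fails, which shows the extra hypothesis and the weaker conclusion are unavoidable. Your proposal has no step at all corresponding to this.

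Second, the half-relaxed-limits machinery needs almost-sure locally uniform convergence of $W^\e$, whereas the mild-white-noise hypothesis only gives convergence in distribution; you do not say how to bridge this. The paper makes a specific reduction (\Cref{sec:reduction}): along any subsequence one invokes the Skorokhod/Ikeda--Watanabe representation to replace $W^\e$ by a pathwise-converging copy $\widehat W^\e$, proves a.s.\ convergence there, and then transfers back to distributional convergence by uniqueness in law; this is not automatic.

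Third, your assertion that ``viscosity comparison delivers $\e$-independent Lipschitz bounds'' understates the actual obstacle. What is needed is the sharp barrier estimate of \Cref{lem:sharp_bounds} (equivalently~\eqref{eq:bounds_rho}), whose error terms $\e^{4/3}\mu_1|y|+\mu_2\e^2y^2/2+\e^{2/3}\mu_3\int_0^{y\e^{2/3}}|W^\e|^2$ are precisely what guarantees both the local boundedness of $\ochi^\e$ and the correct boundary datum at $\xi=0$ in the half-relaxed limits. Obtaining this requires building sub- and super-solutions of~\eqref{eq:general_metric_problem} with Brownian-weighted correction terms and verifying them on a delicately chosen domain $V_\e$, which is the hardest part of the paper; a generic Lipschitz bound does not give enough to identify the boundary condition for $\chi$.

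Finally, a minor structural point: the paper first passes to the time-independent metric planar problem ($\alpha=\infty$) and then treats $1\le\alpha<\infty$ by a perturbation argument (\Cref{lem:a_priori_td}) showing the time-dependent solution stays within $O(\e^{1+\alpha}t^2)$ of the autonomous one; your direct substitution into the time-dependent equation is morally the same computation but obscures why the restriction $\alpha\ge 1$ enters.
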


We point out that the front location for the G-equation, given in \Cref{thm:rough_geometric}, and those of the eikonal equation, given in \Cref{thm:rough_FKPP}, have the same approximate expansion,
\[
	y + \e^{2/3} \chi(x, \e^{2/3} y) + (\text{lower order terms})
		= t.
\]
This is somewhat surprising since examples were given in~\cite{MajdaSouganidis} where these two models do not have the same front asymptotics for $\e>0$.

\subsubsection*{A simple example}

To illustrate the results, we find the front in the simple example where $u_\parallel \equiv 1$.  Since the conclusion is the same for both $G^\e$ and $v^\e$, we consider, for notational simplicity, only the solution $G^\e$ of~\eqref{eq:geometric};  however, the same discussion applies to the solution $v^\e$ of~\eqref{eq:FKPP}.  With $u_\parallel \equiv 1$, the solution to~\eqref{eq:corrector} is $\chi(x,\xi) = W(\xi)$.  \Cref{thm:rough_geometric} yields that the front location is
\[
	\Gamma_t(G^\e)
		= \{(x,y) \in \R^n: t = y + \e^{2/3} \chi^\e(x,\e^{2/3}y)\}
		\approx \{(x,y) \in \R^n: t = y + \e^{2/3} W(\e^{2/3}y)\}.
\]
Since, in view of the Brownian scaling, $\e^{2/3} W(\e^{2/3}y)$ is equal in distribution to $\e \sqrt y (\widetilde W(t) /\sqrt t)$, where $\widetilde W$ is an independent Brownian motion, we find that $(x,y)$ belongs to the front at time $t$ when $t \approx y + \e \sqrt y \widetilde W(t)/\sqrt t$, that is
\[
	\Gamma_t(G^\e)
		\approx \{(x,y) \in \R^n: y
		\approx t - \e \widetilde W(t)\}.
\]
In other words, we see Brownian fluctuations of the front of order $\e$.

\subsubsection*{Further connections with previous works}

In addition to the related work discussed above, our work is placed in the field of research into precise descriptions of the effect of advection on front propagation.  The body of literature devoted to these problems is enormous, and we thus only provide a small sample of the current research that is most relevant to the current work.  While certain implicit representation formulas of the speed and the front profile exist (see, e.g., Xin~\cite{Xin09}), they are often difficult to quantify.  To our knowledge, most non-trivial results that can be quantified precisely are done in particular asymptotic regimes, especially when the flow becomes large.  We mention the studies of reaction-diffusion equations in the presence of a large time-independent shear flow by Hamel and Zlatos~\cite{HamelZlatos} and a large cellular flow by Novikov and Ryzhik~\cite{NovikovRyzhik}.  In addition, Hamilton-Jacobi models like~\eqref{eq:FKPP} and~\eqref{eq:geometric} have been studied in the setting with a large cellular flow by Xin and Yu~\cite{XinYu} and when $u$ is the ABC flow by Xin, Yu, and Zlatos~\cite{XinYuZlatos}.

%

\medskip

Beyond this, we mention a somewhat surprising connection to a recent work by Corwin and Tsai on the weakly inhomogeneous ASEP process~\cite{CorwinTsai}.  There, using probabilistic techniques, the authors show that the introduction of a small inhomogeneity yields fluctuations around the homogeneous process that are governed by an equation similar to~\eqref{eq:corrector_viscous} (see \cite[equation (1.7) and Remark 1.8]{CorwinTsai}).  To roughly see why the two results should be related, one should think of the inhomogeneity in their process as a random drift term, similar to $u$.

\subsubsection*{Organization of the paper}

The assumptions and results are stated  more precisely in \Cref{sec:results}.  
In \Cref{sec:special} we construct some special solutions that we refer to as ``perturbed traveling waves.''  We do this first in the autonomous setting and then extend it by a bootstrapping argument to the non-autonmous problem.  These results are then used in \Cref{sec:G_ivp} and \Cref{sec:FKPP_ivp} to understand the front location for the initial value problems~\eqref{eq:geometric} and~\eqref{eq:FKPP} respectively.  This allows us to conclude the proofs of \Cref{thm:rough_geometric} and \Cref{thm:rough_FKPP}. The main technical lemma that we use to construct the perturbed traveling waves is the a priori estimates on the metric planar problem.  This is the subject of \Cref{sec:rho}.  

\subsubsection*{Acknowledgements}

Henderson was partially supported by the National Science Foundation Research Training Group grant DMS-1246999. Souganidis was partially supported by the National Science Foundation grants DMS-1266383 and DMS-1600129 and the Office for Naval Research Grant N00014-17-1-2095.

\section{Assumptions and Results}\label{sec:results}

\subsection{The assumptions}
We begin with the assumptions on the initial datum and the advection.  The first, which concerns~\eqref{eq:geometric} and~\eqref{eq:FKPP}, is that, heuristically, the $0$-level set of $G_0$ is $\{y = 0\}$ and $G_0$ ``lifts'' away from zero in a uniform way in $x$ (see \Cref{fig:G_0_assumption}).  The latter is assumed to avoid ``fattening'' of the $0$-level set as $|x|\to\infty$.  
For a more in-depth discussion of the level set method and issues related to fattening, we refer the reader to the review by Souganidis~\cite{SouganidisCIMA}.
\begin{assumption}\label{assumption:initial_datum}
$G_0 \in L^\infty_{\rm loc}(\R^n)$ and there exist $\underline G,\, \overline G \in C^{0,1}_{\rm loc}(\R)\cap C^1_{\rm loc}(\R_-\cup\R_+)$ such that $\underline G', \overline G' > 0$, $\underline G \leq G_0 \leq \overline G$, and $\overline G(0) = \overline G (0) = 0$.
\end{assumption}
Initial data satisfying~\Cref{assumption:initial_datum} are sometimes called ``front-like.''  The prototypical example is $G_0(x,y) = y$.
\begin{figure}
\begin{center}
\begin{overpic}[scale = .5]
		{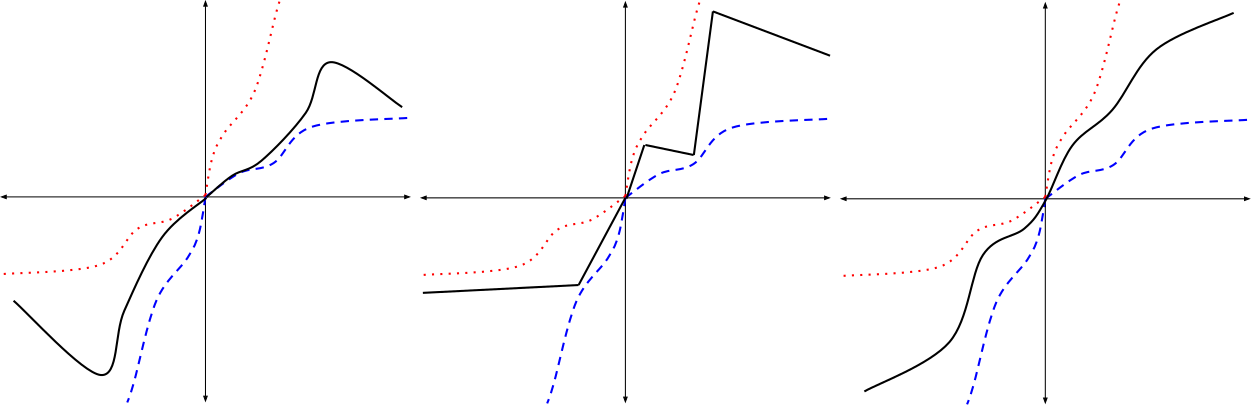}
	\put (98.5,14.5){$y$}	
	\put (28,28){$G_0(x_1,\cdot)$}	
	\put (62,30.5){$G_0(x_2,\cdot)$}
	\put (95, 28){$G_0(x_3,\cdot)$}
	\put (2,12) {\color{red} $\overline G$}
	\put (12,3) {\color{blue} $\underline G$}
\end{overpic}
\caption{A cartoon illustrating \Cref{assumption:initial_datum}.  Each plot is the profile of $G_0(x_i,\cdot)$ for three values  $x_1$, $x_2$, $x_3 \in \R^{n-1}$.  The dotted line is $\overline G$, the  dashed line is $\underline G$, and the solid black line is $G_0(x_i,\cdot)$.  Notice that, regardless of $x_i$, $G_0$ leaves zero at $y= 0$ in a uniform way.}\label{fig:G_0_assumption}
\end{center}
\end{figure}

\medskip

Before we state the assumption on the advection, we discuss the notion of mild approximation of white noise.  Let $(\Omega, \mathcal{F}, P)$ be a probability space with expectation $E$, and let $\mathcal{F}_{y_1,y_2}:= \sigma\{w(y): y_1 \leq y \leq y_2\}$.  We say that $w: \R\times \Omega \to \R$ is a mild approximation of white noise if
\begin{enumerate}[(i)]
\item there exists $M>0$ such that, with probability $1$, $\|w\|_{C^1(\R)} \leq M$;
\item for all $y\in \R$, $\E[w(y)] = 0$;
\item $w$ is stationary and strongly mixing with rate $p>3/2$; that is, if
\[
	\rho(y) := \sup_{y_1} \sup_{y_2 \geq y_1}  \sup_{A \in \cF_{y_2 + y, \infty}, B\in \cF_{y_1, y_2}} \frac{|P(A\cap B) - P(A)P(B)|}{P(B)},
	\]
	then
\[
	\int_0^\infty \rho(y)^{1/p} dy < \infty.
\]
\end{enumerate}
To simplify the notation, in what follows, we assume that $M\geq 1$ and 
\[
	2 \int_0^\infty E[w(0) w(\xi)] d\xi =1.\]

\medskip

It is well-known that, if $w$ satisfies (i), (ii), and (iii), then
\begin{equation}\label{eq:approximate_BM}
	W^\e(y) := \e^{-1/3} \int_0^y w( \e^{-2/3} z) dz
\end{equation}
converges, as $\e\to0$, in distribution to a Brownian motion $W$; see, for example,  Funaki~\cite{Funaki}.  The term mild refers to the lower bound on $p$ in (iii).   
For an more extensive discussion about mild approximation of white noise, we refer to Ikeda and Watanabe~\cite{IkedaWatanabe}.

\medskip

A simple example of mild white noise $w$ is
\begin{equation}\label{eq:advection_example}
	w(y) = \int_\R \widetilde S_{z} \phi'(y-z) dz,
\end{equation}
where $\widetilde S$ is a piece-wise linear interpolation of a random walk $S$, indexed by $\Z$ and with $S_0 = 0$, and $\phi \in C^\infty_c$ is non-negative and $\supp(\phi) \subset [0,1]$.  
Properties (i) and (ii) are clearly satisfied, while (iii) is verified by writing
\[
	w(y) = \int_{y-1}^y \left(\widetilde S_z - \widetilde S_{y-1}\right) \phi'(y - z) dz,
\]
noticing that $w(y)$ and $w(y')$ are independent if $y'>y+1$, and observing that $\supp \rho \subset [0,1]$.

\medskip

The second assumption is:
\begin{assumption}\label{assumption:advection}
The advection $u$ is of the form
\begin{equation}
	u(x, y, t)
		= (u_\perp(x,y, t), u_\parallel(x, t) w(y)),
\end{equation}
where $w$ is mild white noise, $u_\perp  \in C^2(\R^n \times \R_+)^{n-1}$, and $u_\parallel \in C^2(\R^{n-1}\times \R_+)$.  
\end{assumption}
We are interested in the fronts $\Gamma_t(G^\e)$ and $\Gamma_t(v^\e)$ of $G^\e$ and $v^\e$ respectively, where, for any $\phi : \R^n\times\R_+ \to \R$ and $t\in \R_+$, 
\begin{equation}\label{eq:front}
	\Gamma_t(\phi) := \{(x,y) \in \R^n : \phi(x,y,t) = 0\}.
\end{equation}
As discussed above, a special solution of~\eqref{eq:geometric} and~\eqref{eq:FKPP}, when $\e=0$, is $G^0(x,y,t) = v^0(x,y,t) = y - t$. Hence, $\Gamma_t(G^0) = \Gamma_t(v^0) = \{(x,t): x \in \R^{n-1}\}$.  The goal is to understand the first order correction to this for $\e \ll 1$.

\subsection{The G-equation}

We first construct a special solution of~\eqref{eq:geometric} that has the form $y-t + \e^{2/3} \chi^\e$ and that we refer to as a ``perturbed traveling wave''.  We use this term for two reasons.  Firstly, it is the sum of a traveling wave $y-t$ and a small term $\e^{2/3} \chi^\e$, and secondly, it is a special solution that plays a fundamental role in analyzing the general case, much like a traveling wave.   The perturbation $\chi^\e$ acts as the ``corrector'' in the averaging problem that we are studying.

\begin{theorem}\label{thm:geometric_ti}
	Suppose that \Cref{assumption:advection} holds and $\alpha \geq 1$.  There exists $\chi^\e \in L^\infty_{loc}(\R^n\times\R_+)$ such that
	\begin{enumerate}[(i)]
	\item $G_{\ptw}^\e(x,y,t) := y - t + \e^{2/3} \chi^\e(x,\e^{2/3} y, \e^{2/3} t)$ solves~\eqref{eq:geometric},
 	\item  $\chi^\e$ converges in distribution on $\R^{n-1}\times\{(\xi,\tau) \in \R\times [0,\infty): \xi \geq \tau\}$, as $\e \to 0$, to the solution $\chi$ of~\eqref{eq:corrector},
 	\item $G^\e_\ptw(\cdot,\cdot,0)$ satisfies \Cref{assumption:initial_datum}.
	\end{enumerate}
\end{theorem}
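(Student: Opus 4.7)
The plan is to first construct the perturbed traveling wave in the autonomous setting, where $u$ is independent of $t$, via the stationary metric problem, and then to extend to the non-autonomous case by a bootstrapping argument. In the autonomous case I would let $\rho^\e$ be the unique solution of the stationary Hamilton-Jacobi equation
\[
	|\nabla \rho^\e| + \e u(x,y) \cdot \nabla \rho^\e = 1 \quad \text{in } \R^{n-1}\times\R_+, \qquad \rho^\e = 0 \text{ on } \R^{n-1}\times\{0\},
\]
which is precisely the metric planar problem whose well-posedness and a priori estimates are the subject of \Cref{sec:rho}. Setting $G^\e_\ptw(x,y,t) := \rho^\e(x,y) - t$ immediately gives a solution of \eqref{eq:geometric} in the autonomous case, and produces (iii) via $\rho^\e(x,0)=0$ together with the uniform closeness $\rho^\e(x,y)\approx y$ from \Cref{sec:rho}.

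To bring out the Brownian scaling, I would next define the corrector
\[
	\chi^\e(x,\xi) := \e^{-2/3} \bigl( \rho^\e(x, \e^{-2/3}\xi) - \e^{-2/3}\xi \bigr),
\]
so that $G^\e_\ptw(x,y,t) = y - t + \e^{2/3} \chi^\e(x, \e^{2/3} y)$, matching the form in (i). Substituting this ansatz into the equation for $\rho^\e$, expanding $\sqrt{(1 + \e^{4/3}\partial_\xi \chi^\e)^2 + \e^{4/3}|\nabla_x \chi^\e|^2}$ to leading order, and dividing by $\e^{4/3}$, one obtains
\[
	\partial_\xi \chi^\e + \tfrac12 |\nabla_x \chi^\e|^2 = - u_\parallel(x)\, \partial_\xi W^\e(\xi) + O(\e^{1/3}),
\]
with $\chi^\e(x,0) = 0$, where $W^\e$ is the approximate Brownian motion from \eqref{eq:approximate_BM}; this is the natural pre-limit analogue of \eqref{eq:corrector}.

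For the non-autonomous case I would run a bootstrapping argument exploiting that $\alpha \geq 1$, so that $u$ depends on the slow time $\tau = \e^{2/3} t$ only through $u_\parallel(x, \e^{\alpha - 2/3}\tau)$, which varies slowly. I would freeze $u$ at a time $\tau_0$ to invoke the autonomous construction, then patch the frozen solutions over a partition of $\tau$-intervals by a contraction/stability estimate based on the Lipschitz bounds from \Cref{sec:rho}. The cone $\{\xi \geq \tau\}$ appearing in (ii) reflects the natural direction of propagation in $\xi$ starting from the moving boundary $\{\xi = \tau\}$, where the metric problem's characteristics emanate.

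Finally, for the convergence in distribution in (ii) I would combine (a) the invariance principle giving $W^\e \to W$ in distribution for mild white noise, which is classical under the strong-mixing hypothesis, (b) tightness of $\chi^\e$ from the uniform bounds of \Cref{sec:rho}, and (c) identification of any subsequential weak limit as the solution of \eqref{eq:corrector}. Step (c) is the main obstacle: in the limit, the forcing $-u_\parallel \partial_\xi W^\e$ becomes the distributional derivative of a Brownian motion, and ordinary viscosity-solution stability does not apply. Passing to the limit therefore requires the stochastic viscosity framework appropriate to \eqref{eq:corrector} together with the quantitative $L^\infty_{\rm loc}$ estimates of \Cref{sec:rho}; this is where the mixing rate $p > 3/2$ and the precise structure of \eqref{eq:corrector} are crucially used.
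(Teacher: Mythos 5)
Your autonomous-case setup matches the paper exactly: solving the metric planar problem for $\rho^\e$, setting $G^\e_\ptw = \rho^\e - t$, defining $\chi^\e(x,\xi) := \e^{-2/3}(\rho^\e(x,\e^{-2/3}\xi) - \e^{-2/3}\xi)$, and deriving the pre-limit version of~\eqref{eq:corrector} are all correct and align with \Cref{prop:general} and \Cref{lem:rho}. You also correctly flag that the sharp $L^\infty$ bound on $\rho^\e - y + \e^{2/3}u_\parallel W^\e(\e^{2/3} y)$ is what makes $\chi^\e$ locally bounded; indeed, the crude bound $|\rho^\e - y| \lesssim \e|y|$ only gives $|\chi^\e| \lesssim \e^{-1/3}|\xi|$, which blows up, so the refined estimate~\eqref{eq:bounds_rho} is essential (and is the hard content of \Cref{sec:rho}).

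However, the non-autonomous case in your proposal is a genuine gap, and it diverges materially from what the paper does. Freezing $u$ at $\tau_0$ and patching over a partition of $\tau$-intervals produces only an approximate solution, so part~(i) --- that $G^\e_\ptw$ solves~\eqref{eq:geometric} exactly --- would already fail. Moreover, the patching scheme has to control error accumulation across interfaces and there is no obvious way to glue viscosity solutions of first-order HJ equations across time slices without introducing boundary-layer errors; you would also have to confront the cone $\{\xi \geq \tau\}$ at every interface. The paper avoids all of this: it takes $G^\e_\ptw$ (equivalently $f^\e$ in \Cref{prop:general_td}) to be the \emph{exact} solution of the non-autonomous equation with initial datum equal to the autonomous PTW $\rho^\e$, proves the one-shot error bound $|f^\e - f^\e_\aut| \le C\|u\|_{C^1}\e^{1+\alpha}t^2$ (\Cref{lem:a_priori_td}) via sub/super-solutions, and for $\alpha=1$ passes to the limit via half-relaxed limits combined with a change of variables $\zeta = \xi - \tau$ that lets one apply the comparison principle along characteristic rays where $\xi - \tau$ is constant.

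Your convergence argument (tightness plus identification of subsequential limits) is also looser than, and different from, the paper's. The issue you correctly identify --- that one cannot pass to the limit in $\e^{-1/3}u_\parallel w(\e^{-2/3}\xi)$ pointwise --- is resolved in the paper by a Skorokhod-type representation (cf.~\cite{IkedaWatanabe}): extract a subsequence, replace $W^\e$ by an equal-in-distribution copy $\widehat W^\e$ converging \emph{almost surely} to a Brownian motion, fix $\omega$, and then apply the deterministic half-relaxed-limit machinery to $\ochi^\e := \chi^\e + u_\parallel W^\e$ to show $\ochi^* = \ochi_*$ by comparison. Since every subsequence has a further subsequence converging to the same (unique) limit $\chi$, distributional convergence of the original sequence follows. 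Your ``tightness + subsequential identification'' skips the crucial step of how to identify the limit; invoking ``the stochastic viscosity framework'' names the tool but does not supply the argument for stability/uniqueness of the stochastic viscosity solution along the sequence. You should make the Skorokhod reduction explicit, or alternatively supply a precise functional setting in which tightness can be established and a martingale-problem-type identification carried out.
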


Clearly $G^\e_\ptw$ depends on $\alpha$, but we omit this for notational simplicity.

\medskip

We describe and discuss the precise definition of locally uniform convergence on $\R^{n-1}\times\{(\xi,\tau) \in \R \times [0,\infty) : \xi \geq \tau\}$ that we use throughout at the end of this section.

\medskip

Although the convergence of $\chi^\e$ to $\chi$ holds on $\R^{n-1}\times \{(\xi,\tau) \in \R\times [0,\infty): \xi \geq \tau\}$,  the relevant set for locating the front is merely $\R^{n-1}\times \{(\xi,\xi) : \xi \in [0,\infty)\}$.  To see this, notice that
\[
	\Gamma_t(G^\e_\ptw) = \{(x,y,t) : y + \e^{2/3} \chi^\e(x, \e^{2/3}y, \e^{2/3} t) = t\}.
\]
It follows from the a priori estimates~\eqref{eq:chi_bound} on $\chi^\e$ that $y = t + o(1)$, where $o(1) \to 0$ as $\e\to 0$.  Letting $\xi = \e^{2/3} x$ and $\tau = \e^{2/3} t$, the term involving the corrector becomes $\e^{2/3} \chi^\e(x, \tau + o(1), \tau)$.  It is thus apparent that, to understand the front location when $\e \ll 1$, it is sufficient to study the convergence of $\chi^\e(x,\xi,\tau)$ when $\xi = \tau + o(1)$.

\medskip

We note the interesting fact that the transverse advection $u_\perp$ does not affect the first order correction in the limit.  In addition, we point out that while $\chi^\e$ has time-dependence for all $\e>0$, it converges to a limit $\chi$ that does not evolve in time.  Finally, we remark that we do not know if the restriction $\alpha \geq 1$ is sharp.

\medskip

One way to understand \Cref{thm:geometric_ti} is through the following informal computation that ignores technical issues such as the time dependence of $u$ and the lack of regularity of $G^\e_\ptw$.  When $\alpha = \infty$, we use the ansatz
\[
	G^\e_\ptw(x,y,t) = y - t + \e^{2/3} \chi^\e(x,\e^{2/3} y),
\]
which, from~\eqref{eq:geometric}, yields
\[
	1 = \e u(x, \e^{-2/3} \xi) \cdot (\e^{2/3} D_x \chi^\e, 1 + \e^{4/3} \chi^\e_\xi) + |(D_x \chi^\e, 1 + \e^{4/3} \chi^\e_\xi)|.
\]
Approximating the last term with a Taylor expansion yields
\[
	1 = \e u_\parallel(x) w(\e^{-2/3} \xi)  + \frac{1}{2}|D_x \chi^\e|^2 + 1 + \e^{4/3} \chi^\e_\xi + O(\e^{5/3}).
\]
Re-arranging, dividing by $\e^{4/3}$, and using that $\e^{-1/3} w(\e^{-2/3} \xi) = W_\xi^\e(\xi)$, we find
\[
	\chi^\e_\xi + \frac12 |D_x \chi^\e|^2 = - u_\parallel W_\xi^\e + O(\e^{1/3}).
\]
We identify~\eqref{eq:corrector} by taking the limit $\e\to0$.

\medskip

Using the level set method, we can describe the front asymptotics for solutions $G^\e$ of~\eqref{eq:geometric} with more general initial datum.
\begin{proposition}\label{prop:geometric_td}
	Suppose that $\alpha \geq 1$, and let $G^\e$ solve~\eqref{eq:geometric} with $G_0$ and $u$ satisfying \Cref{assumption:initial_datum} and \Cref{assumption:advection} respectively.  Then, for all $t\in \R_+$, $\Gamma_t(G^\e) = \Gamma_t(G^\e_\ptw).$
	Moreover, $\{G^\e \leq 0 \} = \{G^\e_{\ptw} \leq 0\}$.
\end{proposition}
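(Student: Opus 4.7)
The proof is an application of the level-set formulation of geometric front propagation, valid here because the Hamiltonian of~\eqref{eq:geometric}, $H^\e(p, x, t) = \e u(x, t) \cdot p + |p|$, is positively one-homogeneous of degree one in $p$. The plan is to show that the $0$-sublevel set of a viscosity solution of~\eqref{eq:geometric} at a given time depends only on the signed level sets of the initial datum, and then to match these for $G_0$ and $G^\e_\ptw(\cdot, \cdot, 0)$.

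The first observation is that both initial data have identical signed level sets: by hypothesis $G_0$ satisfies \Cref{assumption:initial_datum}, and by \Cref{thm:geometric_ti}(iii) so does $G^\e_\ptw(\cdot, \cdot, 0)$. The strict monotonicity of the envelopes $\underline G, \overline G$ together with $\underline G(0) = \overline G(0) = 0$ then forces $\{G_0 \leq 0\} = \{y \leq 0\}$, $\{G_0 \geq 0\} = \{y \geq 0\}$, and $\{G_0 = 0\} = \{y = 0\}$, and the same identities hold for $G^\e_\ptw(\cdot, \cdot, 0)$ via its own envelopes.

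Next, I would invoke the level-set lemma for positively one-homogeneous Hamilton-Jacobi equations (\cite{SouganidisCIMA}): for any two continuous, front-like initial data $H_0^{(1)}, H_0^{(2)}$ with identical signed level sets, the corresponding viscosity solutions satisfy $\{H^{(1), \e}(\cdot, t) \leq 0\} = \{H^{(2), \e}(\cdot, t) \leq 0\}$ for every $t$. The key mechanism is that one-homogeneity of $H^\e$ in $p$ implies $\theta \circ H^{(1), \e}$ is again a viscosity solution of~\eqref{eq:geometric} for any continuous nondecreasing $\theta : \R \to \R$ with $\theta(0) = 0$ (immediate from the chain rule when $\theta \in C^1$, and general by approximation), so comparison can be applied after interpolating between $H_0^{(1)}$ and $H_0^{(2)}$ by such $\theta$'s. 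Applying this lemma to the continuous envelopes $\underline G, \overline G$ of $G_0$ and the analogous envelopes $\underline G^\ptw, \overline G^\ptw$ of $G^\e_\ptw(\cdot, \cdot, 0)$, their corresponding viscosity solutions share a common $0$-sublevel set $C_t$ at every $t$. The comparison principle for~\eqref{eq:geometric} yields $\underline G^\e \leq G^\e \leq \overline G^\e$ and analogously for $G^\e_\ptw$; hence both $\{G^\e \leq 0\}$ and $\{G^\e_\ptw \leq 0\}$ are squeezed between identical sets and must equal $C_t$, giving $\{G^\e \leq 0\} = \{G^\e_\ptw \leq 0\}$. Running the same argument with $\geq 0$ in place of $\leq 0$ and intersecting the two equalities yields $\Gamma_t(G^\e) = \Gamma_t(G^\e_\ptw)$.

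The main obstacle is the level-set lemma itself. While it is classical in the theory of geometric front propagation, the one-homogeneity of $H^\e$ is used in an essential way (to guarantee that monotone reparametrizations of solutions remain solutions), and the $L^\infty_{\rm loc}$ regularity of $G_0$ forces the comparison step to be propagated through its continuous envelopes rather than applied directly. A secondary, purely bookkeeping point is to verify that the uniqueness of viscosity solutions lets one identify the specific object $G^\e_\ptw$ built in \Cref{thm:geometric_ti} with the unique solution of~\eqref{eq:geometric} issued from its own initial datum, so that the comparison sandwich applies to it as well.
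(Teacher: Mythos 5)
Your proposal is correct and rests on the same mechanism as the paper's proof: one-homogeneity of the Hamiltonian makes monotone reparametrizations of solutions into sub- or super-solutions, and comparison then transfers level sets from the known solution $G^\e_\ptw$ to $G^\e$. The paper implements the "level-set lemma" directly rather than citing it abstractly: it picks $\underline\phi$ (resp.\ $\overline\phi$) built from the envelope $\underline G$ (resp.\ $\overline G$) of $G_0$ by squeezing ($y\mapsto y/2$ for $y\geq 0$, $y\mapsto 2y$ for $y<0$, and the reverse), smooths to $\underline\phi_\delta$, defines $\underline\mu_\delta := \underline\phi_\delta\circ G^\e_\ptw - C\delta$, checks via the chain rule and one-homogeneity that $\underline\mu_\delta$ is a sub-solution, verifies $\underline\mu_\delta \leq G_0$ at $t=0$ using \eqref{eq:bounds_rho1} and \Cref{assumption:initial_datum}, and concludes $\underline\mu\leq G^\e\leq\overline\mu$ with $\Gamma_t(\underline\mu)=\Gamma_t(\overline\mu)=\Gamma_t(G^\e_\ptw)$. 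This avoids solving \eqref{eq:geometric} from four separate envelope initial data and closes the argument with a single sandwich around the one known explicit solution $G^\e_\ptw$, whereas your route compares solutions issued from $\underline G,\overline G,\underline G^{\ptw},\overline G^{\ptw}$ and invokes the invariance of the $0$-sublevel set across all four. Both are sound; the paper's version is more self-contained, while yours is a cleaner black-box reduction to the classical level-set invariance statement.
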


%

\Cref{prop:geometric_td} implies that the special solutions constructed in \Cref{thm:geometric_ti} are sufficiently stable to determine the front for the general initial value problem.  

%

%
%
%
%
%

\subsection{The eikonal equation}

As above, we begin by constructing the perturbed traveling waves for~\eqref{eq:FKPP}, that is, we state the analogue of \Cref{thm:geometric_ti}.
\begin{theorem}\label{thm:FKPP}
Suppose that \Cref{assumption:advection} holds, $\alpha \geq 1$, and $\beta \geq 2/3$.  There exists $\chi^\e \in L^\infty_{loc}(\R^n\times\R_+)$ such that
	\begin{enumerate}[(i)]
	\item $v_{\ptw}^\e(x,y,t) := y - t + \e^{2/3} \chi^\e(x,\e^{2/3} y, \e^{2/3} t)$ solves~\eqref{eq:FKPP},
 	\item $\chi^\e$ converges in distribution on $\R^{n-1}\times \{(\xi,\tau) \in \R\times[0,\infty): \xi \geq \tau\}$, as $\e \to 0$, to the solution $\chi$ of~\eqref{eq:corrector} when $\beta > 2/3$ and the solution $\chi_{\rm visc}$ of~\eqref{eq:corrector_viscous} when $\beta = 2/3$,
 	\item $v^\e_\ptw(\cdot,\cdot,0)$ satisfies \Cref{assumption:initial_datum}.
	\end{enumerate}
\end{theorem}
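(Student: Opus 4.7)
The plan is to mirror the proof of \Cref{thm:geometric_ti}, with the extra parabolic term $\tfrac{\e^\beta}{2}\Delta$ tracked carefully: it vanishes in the limit when $\beta > 2/3$ and survives as $\tfrac12 \Delta_x$ precisely when $\beta = 2/3$. First I substitute the ansatz $v^\e_\ptw(x,y,t) = y - t + \e^{2/3} \chi^\e(x,\e^{2/3}y,\e^{2/3}t)$ into~\eqref{eq:FKPP}. The base profile $y-t$ exactly satisfies $v_t + \tfrac12|\nabla v|^2 + \tfrac12 = 0$, so after a Taylor expansion in $\e^{2/3}$ and division by $\e^{4/3}$, the requirement that $v^\e_\ptw$ solve~\eqref{eq:FKPP} reduces to
\begin{equation*}
    \chi^\e_\tau + \chi^\e_\xi + \tfrac12 |\nabla_x \chi^\e|^2 - \tfrac{\e^{\beta-2/3}}{2} \Delta_x \chi^\e + u^\e_\parallel \dot W^\e = -\e^{1/3} u^\e_\perp \cdot \nabla_x \chi^\e + E^\e,
\end{equation*}
where $\dot W^\e(\xi) := \e^{-1/3} w(\e^{-2/3}\xi)$, the rescaled $u^\e_\parallel, u^\e_\perp$ carry the $\e^{\alpha-2/3}$-scaled time argument of $u$, and $E^\e$ collects the terms that are $o(1)$ once a Lipschitz bound on $\chi^\e$ is available.

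In the autonomous case $\alpha = \infty$, I seek $\chi^\e$ independent of $\tau$, so the above reduces to a parabolic Hamilton--Jacobi equation in $\xi$ with initial datum $\chi^\e(x,0) = 0$ and viscosity coefficient $\e^{\beta-2/3}/2$; for fixed $\e > 0$ this admits a smooth solution by standard theory. The key nontrivial step is to obtain bounds on $\chi^\e$ and $\nabla_x \chi^\e$ that are uniform in $\e$ on compact subsets of $\R^{n-1}\times\{\xi \geq \tau\}$; these come from the a priori estimates for the metric planar problem of \Cref{sec:rho}, adapted to include the $\tfrac{\e^{\beta-2/3}}{2}\Delta_x$ term. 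The bootstrap to $\alpha \in [1, \infty)$ is then identical to the corresponding step for \Cref{thm:geometric_ti}, since the $t$-dependence of $u$ only affects terms that may be absorbed into $E^\e$.

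With uniform estimates in hand, I extract a subsequential limit of $\chi^\e$, invoke the convergence $W^\e \to W$ in distribution (cf.~\eqref{eq:approximate_BM}), and pass to the limit in a distributional formulation of the PDE. The transverse term $\e^{1/3}u^\e_\perp\cdot\nabla_x\chi^\e$ and the error $E^\e$ vanish; the viscous contribution disappears for $\beta > 2/3$ and survives for $\beta = 2/3$, yielding~\eqref{eq:corrector} or~\eqref{eq:corrector_viscous}, respectively. Property (iii) is then immediate from the uniform Lipschitz bound on $\chi^\e$ together with $\chi^\e(x,0,0) = 0$.

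The main obstacle is the limit passage in the stochastic forcing $u^\e_\parallel \dot W^\e$ together with the quadratic nonlinearity $\tfrac12|\nabla_x \chi^\e|^2$, since $\dot W^\e$ has no pointwise limit. As for \Cref{thm:geometric_ti}, this is handled by testing against smooth functions in $\xi$ so that $\chi^\e_\xi$ and $\dot W^\e$ are converted into $\chi^\e$ and $W^\e$, and then using a Skorokhod representation to upgrade distributional convergence to an almost-sure statement on a common probability space. The additional difficulty at $\beta = 2/3$ is to commute $\Delta_x$ with the limit, which requires the parabolic estimates of \Cref{sec:rho} to be strong enough to give compactness in a topology compatible with second $x$-derivatives.
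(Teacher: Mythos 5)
Your high-level outline is broadly consistent with the paper's: substitute the ansatz, reduce to the autonomous metric planar problem, extract a priori bounds from \Cref{sec:rho}, bootstrap to $\alpha \in [1,\infty)$ via an error estimate, and use a Skorokhod-type representation to reduce from distributional to almost-sure convergence. But the mechanism you propose for passing to the limit has a genuine gap. You say you would ``pass to the limit in a distributional formulation of the PDE,'' testing against smooth functions so that $\chi^\e_\xi$ and $\dot W^\e$ integrate by parts into $\chi^\e$ and $W^\e$. That transfer handles the linear terms, but the nonlinearity $\tfrac12|\nabla_x \chi^\e|^2$ does not improve under integration by parts: a distributional limit would require strong convergence of $\nabla_x\chi^\e$. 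No such estimate is available. Indeed, the a priori bounds of \Cref{sec:rho} give $\Lip(\rho^\e) \leq C_L$, but after the rescaling $\chi^\e(x,\xi) = \e^{-2/3}(\rho^\e(x,\e^{-2/3}\xi) - \e^{-2/3}\xi)$ the derivative $\chi^\e_\xi$ is only controlled at scale $\e^{-4/3}$ and blows up. The paper instead works entirely within the viscosity-solution framework: it defines $\ochi^\e_\aut := \chi^\e_\aut + u_{\aut,\parallel}W^\e$ to absorb the stochastic forcing into a coefficient (so the equation for $\ochi^\e_\aut$ contains $W^\e$, not $\dot W^\e$), establishes a local $L^\infty$ bound~\eqref{eq:chi_bound} (no gradient bound needed), and takes the half-relaxed limits $\ochi^*$, $\ochi_*$ in Lemmas~\ref{lem:half_relaxed_G} and~\ref{lem:half_relaxed_td}. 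Stability for viscosity solutions then handles the quadratic nonlinearity and the second-order term with test functions alone.

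Your claim that the $\beta = 2/3$ case requires ``compactness in a topology compatible with second $x$-derivatives'' is a symptom of the same misdirection: in the half-relaxed-limit argument one never differentiates the solution twice, only the smooth test function. Locally uniform bounds plus $W^\e \to W$ locally uniformly (after the Skorokhod reduction) are all that is used, and the $\delta_{\frac23\beta}$ Kronecker-delta in~\eqref{eq:ochiG^*}--\eqref{eq:ochiG_*} records whether the Laplacian survives in the limit. Two further pieces are missing from your sketch: the comparison principle is what identifies $\ochi^* = \ochi_*$ with $\chi_\beta + u_{\aut,\parallel}W$ (uniqueness is not a triviality here — it is the reason the half-relaxed limits collapse to a single function); and in the non-autonomous case $\alpha=1$ you cannot directly compare $\ochi^*$ and $\ochi_*$ because there is no ordering for $\xi < 0$. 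The paper resolves this with the change of variables $\Chi^*(x,\zeta,\tau) = \ochi^*(x,\zeta+\tau,\tau)$ in~\eqref{eq:chiChi}, applying comparison along rays $\xi - \tau = \text{const}$. Finally, for property~(iii) the relevant bound is the sublinear estimate~\eqref{eq:bounds_rho1}, $|\rho^\e(x,y)-y|\leq |y|/2$; a uniform Lipschitz bound on $\chi^\e$ in $\xi$ is not available, as noted above.
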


We note that $\beta =2/3$ is the critical scale in order to see the effect of the viscosity in the limit.  

\medskip

It is harder to bootstrap the front asymptotics of the perturbed traveling wave since the level set method only works for positively homogeneous equations of degree one.  Hence, we obtain estimates on the 0-sub-level set, which, while quite sharp, do not completely characterize the $0$-level set as in~\Cref{prop:geometric_td}.

\begin{proposition}\label{prop:ivp}
Assume that $\beta = \infty$ and $\alpha \geq 1$.  Suppose that $v_0$ and $u$ satisfy \Cref{assumption:initial_datum} and \Cref{assumption:advection} respectively,  $v_0 \geq v^\e_\ptw(\cdot,\cdot,0)$ in $\R^n$, and  $v^\e$ solves~\eqref{eq:FKPP}.  Then
	\begin{equation}\label{eq:sub-level}
		\{(x,y) : G^\e_{\rm ptw} (x,y,t) \leq 0 \}
			\subset \{(x,y) : v^\e(x,y,t) \leq 0\} 
			\subset \{(x,y) : v^\e_{\rm ptw}(x,y,t) \leq 0\}.
	\end{equation}
\end{proposition}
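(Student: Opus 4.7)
The plan is to deduce both inclusions from the comparison principle for~\eqref{eq:FKPP} with $\beta = \infty$, used in tandem with the elementary inequality $|p| \leq \tfrac{1}{2}|p|^2 + \tfrac{1}{2}$ (AM-GM applied to $|p|^2$ and $1$) and with \Cref{prop:geometric_td}, which identifies the zero-sublevel set of \emph{any} front-like solution of the $G$-equation with that of $G^\e_\ptw$.

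For the right-hand inclusion, I would simply note that $v^\e_\ptw$ solves~\eqref{eq:FKPP} by \Cref{thm:FKPP}~(i) and that, by hypothesis, $v^\e(\cdot,\cdot,0) = v_0 \geq v^\e_\ptw(\cdot,\cdot,0)$. The standard Hamilton--Jacobi comparison principle for~\eqref{eq:FKPP} then yields $v^\e \geq v^\e_\ptw$ pointwise, and taking zero-sublevel sets reverses the inequality to give $\{v^\e \leq 0\} \subset \{v^\e_\ptw \leq 0\}$.

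For the left-hand inclusion, I would introduce an auxiliary viscosity solution $\tilde G^\e$ of~\eqref{eq:geometric} with the same initial datum $\tilde G^\e(\cdot,\cdot,0) = v_0$ as $v^\e$; this is admissible since $v_0$ satisfies \Cref{assumption:initial_datum}. The key observation is that $\tilde G^\e$ is automatically a viscosity supersolution of~\eqref{eq:FKPP}: if $\phi$ is a smooth test function and $\tilde G^\e - \phi$ has a local minimum at $(x,y,t)$, then the supersolution condition for the $G$-equation gives $\phi_t + \e u \cdot D\phi + |D\phi| \geq 0$, and the AM-GM inequality upgrades this to $\phi_t + \e u \cdot D\phi + \tfrac{1}{2}|D\phi|^2 + \tfrac{1}{2} \geq 0$, which is exactly the supersolution condition for~\eqref{eq:FKPP} at $\beta = \infty$. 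Comparison against $v^\e$ (which shares the initial datum $v_0$) then gives $v^\e \leq \tilde G^\e$, hence $\{\tilde G^\e \leq 0\} \subset \{v^\e \leq 0\}$. Finally, \Cref{prop:geometric_td} applied with $G_0 = v_0$ yields $\{\tilde G^\e \leq 0\} = \{G^\e_\ptw \leq 0\}$, completing the inclusion.

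The main technical obstacle will be invoking the Hamilton--Jacobi comparison principle in the unbounded setting, as all of $v^\e$, $v^\e_\ptw$, and $\tilde G^\e$ are front-like and grow linearly in $y$. However, by the a priori bounds implicit in the construction of the perturbed traveling waves, each of these differs from $y - t$ by a uniformly bounded $O(\e^{2/3})$ quantity, so pairwise differences are bounded and standard truncation arguments with linearly-growing auxiliary functions make comparison apply. One should also briefly verify that $v^\e_\ptw(\cdot,\cdot,0)$ satisfies \Cref{assumption:initial_datum} via \Cref{thm:FKPP}~(iii), so that the hypothesis $v_0 \geq v^\e_\ptw(\cdot,\cdot,0)$ is consistent and the auxiliary $G$-solution $\tilde G^\e$ is indeed well-posed with a front-like initial datum.
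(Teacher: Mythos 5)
Your proof is correct and follows essentially the same route as the paper: the right inclusion is a direct comparison between $v^\e$ and $v^\e_\ptw$, and the left inclusion introduces the auxiliary $G$-equation solution with initial datum $v_0$, shows it is a supersolution of~\eqref{eq:FKPP} via $|p| \leq \tfrac12|p|^2 + \tfrac12$, applies comparison, and then invokes \Cref{prop:geometric_td}. The remarks about well-posedness of comparison in the front-like setting and about \Cref{thm:FKPP}~(iii) are correct housekeeping that the paper leaves implicit.
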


In view of \Cref{thm:FKPP} and \Cref{thm:geometric_ti}, this result indicates that $v^\e$ has the same front expansion in terms of $\chi$ at the $\e^{2/3}$-order.

\medskip

The extra condition on the initial datum in \Cref{prop:ivp} is quite sharp.  Indeed, fix any $\mu > 1$ and consider the solution of~\eqref{eq:FKPP} with initial datum $v_0(x,y) = y/\mu$.  Letting $\underline v(x,y,t) = -t(\kappa + \e\|u\|_\infty) + y/\mu$ and $\overline v(x,y,t) = -t(\kappa - \e\|u\|_\infty) + y/\mu$, where $\kappa = (2\mu^2)^{-1} + (1/2)$, we see that $\underline v$ and $\overline v$ are, respectively, sub- and super-solutions of~\eqref{eq:FKPP}.  Applying then the comparison principle, we find $\underline v \leq v^\e \leq \overline v$, and, hence, we conclude that
\[
	(x,y) \in \Gamma_t(v^\e)
		\Leftrightarrow
		y \approx \mu \kappa t + O(\e t).
\]
After noting that $\mu\kappa > 1$, this indicates that the sub-level sets of $v^\e$ with this initial datum cannot satisfy~\eqref{eq:sub-level}.

\subsection{Discussion of the proofs, organization, and notation}

\subsubsection*{Discussion of the proof and main difficulties}



The first step is to construct the perturbed traveling waves in the autonomous setting ($\alpha = \infty$). As discussed heuristically below \Cref{thm:geometric_ti}, the proof proceeds via an ansatz that $G^\e_\ptw$ and $v^\e_\ptw$ are of the form $-t + \rho^\e$, where $\rho^\e$ is time-independent and solves the so-called metric planar problem.  We expect the expansion $\rho^\e(x,y) = y + \e^{2/3} \chi^\e(x, \e^{2/3} y)$.  Defining $\chi^\e$ in this way, we use the half-relaxed limits in order to take limit as $\e\to 0$.  Informally, the half-relaxed limits are the ``smallest supersolution'' below $\rho^\e$ and the ``largest subsolution'' above $\rho^\e$ as $\e\to0$.  It can often be shown, using the comparison principle, that these two objects coincide.

\medskip

The latter requires to overcome two main difficulties.  The first is that the process $W^\e$ converges, as $\e\to0$, to $W$ only in distribution.  This does not interact well with the half-relaxed limits, which require pointwise convergence.  To get around this obstruction, we use an argument from~\cite{IkedaWatanabe} that allows to replace $W^\e$ with a process $\widetilde W^\e$ that converges, as $\e\to0$, almost surely to a standard Brownian motion and equals $W^\e$ in distribution.  The second major difficulty is how to obtain a priori estimates of $\rho^\e$ that are sufficiently sharp to conclude that $\rho^\e = y + \e^{2/3} \chi^\e$, where $\chi^\e$ is bounded and $\lim_{\e\to0} \chi^\e_\ptw$ satisfies the correct datum at $y=0$.  This is achieved through the construction of suitable barriers.

\medskip

The above strategy is not enough to study the non-autonomous problem, that is, when $\alpha < \infty$, due to the time-dependence inherited in the equation for $\rho^\e$.  Roughly speaking, our strategy is to build  the perturbed traveling wave in this setting by the addition of a ``very small'' correction term to the perturbed traveling wave from the autonomous case.

\medskip

More specifically, we define the perturbed traveling waves for the non-autonomous problem to be the solutions of~\eqref{eq:geometric} and~\eqref{eq:FKPP} with initial datum that is equal to the perturbed traveling wave from the autonomous case.  We are then able to obtain sufficiently good error estimates between the solution and its initial data allowing to take the half-relaxed limits as $\e\to0$. The result is a non-standard, non-coercive Hamilton-Jacobi equation solved by both the limit and $\chi$ for $\xi > 0$.

\medskip

We do not, however, have control on $\chi$ and the half-relaxed limits $\chi^*$, $\chi_*$  for $\xi < 0$.  The standard comparison principle is valid for for this equation but requires information about $\chi$, $\chi^*$, and $\chi_*$ on $\{\xi < 0\}$.  We side-step this by using a simple change of variables that allows to compare solutions on sets that are preserved by the characteristics, that is, where $\xi -\tau$ is constant. We are thus able to conclude the convergence to $\chi$ in this setting.

\medskip

We bootstrap the results above to general initial datum.  We can conclude \Cref{prop:geometric_td} using the level set method.  In addition, we prove \Cref{prop:ivp} by using the perturbed traveling waves of \Cref{thm:geometric_ti,thm:FKPP} to construct sub- and super-solutions of $v^\e$.

\subsubsection*{Additional notation}

Throughout we only work with locally uniform convergence on sets of the form $\R^{n-1}\times \{(\xi,\tau) \in \R\times [0,\infty): \xi \geq \tau\}$.  Since we care about endpoint behavior at $\xi = \tau$, we use a slightly stronger notion than the standard one.  Indeed, we say that $f_n$ converges to $f$ locally uniformly on $\R^{n-1}\times \{(\xi,\tau) \in \R\times [0,\infty): \xi \geq \tau\}$ if, for any $(x_0,\xi_0,\tau_0) \in \R^{n-1}\times \{(\xi,\tau) \in \R\times [0,\infty): \xi \geq \tau\}$ and any sequence $(x_n, \xi_n, \tau_n) \in \R^n \times [0,\infty)$ converging, as $n\to\infty$, to $(x_0,\xi_0,\tau_0)$, we have $f_n(x_n,\xi_n,\tau_n) \to f(x_0,\xi_0,\tau_0)$ as $n\to\infty$.  The difference is that we allow each $\xi_n$ to take any real values, instead of just values in $[\tau_n,\infty)$.

\medskip

For any $f\in C^{0,1}(\R^n)$, $\Lip(f)$ denotes its Lipschitz constant, for any $f\in L^\infty(\R^n)$, $\|f\|_\infty$ denotes its $L^\infty$-norm, and, for any $f\in C^1(\R^n)$, $\|f\|_{C^1}$ denotes its $C^1$-norm.  Also, $\delta$ denotes the Kronecker delta function.

\medskip

Since we are concerned with the small $\e$ limit, we lose no generality in assuming throughout the paper that $\epsilon \|u\|_{C^1} \leq 1/100$.


\medskip

All functions throughout depend on the variable $\omega \in \Omega$.  When no confusion arises, we suppress this dependence to simplify the writing.

\medskip

Given random variables $X_1, X_2,\dots$ and $X$, $X_n \tod X$ and $X_n \toas X$ mean that, as $n\to\infty$, $X_n$ converges to $X$ in distribution and almost surely respectively.  When two random variables $X$ and $\widetilde X$ have the same distribution, we write $X \stackrel{d}{=} \widetilde X$.

\medskip

Throughout the paper, $W$ is a one-dimensional standard Brownian motion and $W(\xi)$ denotes the value of $W$ at $\xi$.  In addition, we denote white noise by $dW$.   It is important to note that this is one-dimensional white noise in the variable $\xi$ and not space-time white noise.

\medskip

We now make explicit the notion of solution of equations of the form
\begin{equation}\label{eq:svs}
	d f + (H(\nabla f) - \nu \Delta f) dt = g dW(t),
\end{equation}
where $H$ is some Hamiltonian and $\nu \geq 0$. 
We say that $f$ is a solution of~\eqref{eq:svs} if and only if $\overline f(x,t) = f(x,t) - g(x) W(t)$ is a viscosity solution of
\begin{equation}\label{eq:svs_def}
	\overline f_t + H\left( \nabla \overline f + W(t) \nabla g\right)  - \nu \Delta\left(\overline f + W(t) g\right)= 0.
\end{equation}
This definition was used by Dirr and Souganidis in~\cite{DirrSouganidis} and is a special case of the general notion of solution introduced by Lions and Souganidis in~\cite{LionsSouganidis1, LionsSouganidis2, LionsSouganidis3}.

\section{The construction of the perturbed traveling waves}\label{sec:special} 

We prove \Cref{thm:geometric_ti,thm:FKPP}.  Since the arguments are similar, we reduce them to a more general claim (see \Cref{prop:general}).  We begin by addressing the autonomous case $\alpha = \infty$.  Then, we bootstrap to the non-autonomous case (see \Cref{prop:general_td}).

\subsection{The autonomous case $\alpha = \infty$}\label{sec:autonomous}

We work in a more general framework and state the main claim next.
\begin{prop}\label{prop:general}
Suppose that \Cref{assumption:advection} holds, $\beta \geq 2/3$, and $r\in [1,2]$.  There exists $\chi_\aut^\e \in L^\infty_{loc}(\R^n)$ such that
	\begin{enumerate}[(i)]
	\item $f_\aut^\e(x,y,t) := y - t + \e^{2/3} \chi_\aut^\e(x,\e^{2/3} y)$ solves	\begin{equation}\label{eq:general_problem}
		f_{\aut,t}^\e + \e u_\aut \cdot \nabla f_\aut^\e + \frac{1}{r} |\nabla f_\aut^\e|^r + \frac{r-1}{r} = \frac{\e^\beta}{2} \Delta_x f_\aut^\e \qquad \text{ in }~~ \R^n\times \R_+,\\
	\end{equation}
	where $u_\aut^\e(x,y) := u(x,y,0)$;
 	\item as $\e \to 0$, $\chi_\aut^\e$ converges in distribution  on $\R^{n-1}\times [0,\infty)$ to $\chi$, the unique solution of~\eqref{eq:corrector}, if $\beta > 2/3$, or $\chi_{\rm visc}$, the unique solution of~\eqref{eq:corrector_viscous}, if $\beta = 2/3$;
 	\item $f^\e_\aut(\cdot,\cdot,0)$ satisfies \Cref{assumption:initial_datum}.
	\end{enumerate}
%
%
%
\end{prop}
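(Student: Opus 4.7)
The plan is to seek a solution of~\eqref{eq:general_problem} of the form $f^\e_\aut(x,y,t) = -t + \rho^\e(x,y)$, thereby reducing the problem to a stationary one. Substituting this ansatz into~\eqref{eq:general_problem}, the time derivative produces $-1$, which combines with the constant $(r-1)/r$ on the left to cancel the right-hand side down to $1/r$, so that $\rho^\e$ must solve the stationary metric planar problem
\begin{equation*}
    \frac{1}{r}|\nabla\rho^\e|^r + \e u_\aut \cdot \nabla\rho^\e - \frac{\e^\beta}{2}\Delta_x \rho^\e = \frac{1}{r} \quad\text{in }\R^n,
\end{equation*}
with the boundary condition $\rho^\e = 0$ on $\{y = 0\}$. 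Invoking the a priori estimates of \Cref{sec:rho}, which are built via explicit upper and lower barriers, I would produce $\rho^\e$ together with the sharp expansion $\rho^\e(x,y) = y + \e^{2/3}\chi^\e_\aut(x,\e^{2/3}y)$ with $\chi^\e_\aut \in L^\infty_{\rm loc}(\R^n)$ bounded uniformly in $\e$. The identity defining $\chi^\e_\aut$ yields item~(i) by construction, while the pinching of $\rho^\e$ by the barriers near $\{y=0\}$, combined with the uniform bound on $\chi^\e_\aut$, yields~(iii).

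For item~(ii), I would first derive the PDE satisfied by $\chi^\e_\aut(x,\xi)$. Substituting $\rho^\e(x,y) = y + \e^{2/3}\chi^\e_\aut(x,\e^{2/3}y)$ into the stationary equation, Taylor-expanding $|(\nabla_x\chi^\e_\aut,\,1+\e^{4/3}\partial_\xi\chi^\e_\aut)|^r$ around the unit vector $(0,1)$, cancelling the constant $1/r$, dividing by $\e^{4/3}$, and using $\e^{-1/3} w(\e^{-2/3}\xi) = \partial_\xi W^\e(\xi)$ from~\eqref{eq:approximate_BM}, the resulting equation reads
\begin{equation*}
    \partial_\xi \chi^\e_\aut + \frac{1}{2}|\nabla_x \chi^\e_\aut|^2 - \frac{\e^{\beta - 2/3}}{2}\Delta_x \chi^\e_\aut + u_\parallel(x,0)\,\partial_\xi W^\e(\xi) = O(\e^{1/3}),
\end{equation*}
with $\chi^\e_\aut \to 0$ on $\{\xi = 0\}$ in the sense encoded by the barriers. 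The exponent $\beta - 2/3 \geq 0$ ensures that the viscous term survives in the limit precisely when $\beta = 2/3$, matching~\eqref{eq:corrector_viscous}, and drops out when $\beta > 2/3$, matching~\eqref{eq:corrector}.

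The main obstacle is the passage to the limit $\e \to 0$, because $W^\e$ converges to $W$ only in distribution while the method of half-relaxed limits demands almost-sure convergence of the driving signal. I would work with the Dirr--Souganidis transform $\overline\chi^\e_\aut := \chi^\e_\aut + u_\parallel(x,0)\, W^\e(\xi)$, which, in accordance with~\eqref{eq:svs_def}, satisfies a standard viscosity equation with coefficients depending pathwise on $W^\e$. Following the Skorokhod-type argument of~\cite{IkedaWatanabe}, I would replace $W^\e$ by an equivalent-in-law process $\widetilde W^\e$ converging almost surely to $W$. On a full-probability event, the uniform estimates of \Cref{sec:rho} then guarantee that the upper and lower half-relaxed limits of $\chi^\e_\aut$ are finite, have the correct trace at $\xi = 0$, and are respectively sub- and super-solutions of the limiting corrector equation; a comparison principle for~\eqref{eq:corrector} (or~\eqref{eq:corrector_viscous}) forces them to coincide with $\chi$ (respectively $\chi_{\rm visc}$). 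This yields locally uniform almost-sure convergence along the Skorokhod realization and hence convergence in distribution of the original $\chi^\e_\aut$.
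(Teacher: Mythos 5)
Your proposal reproduces the paper's proof strategy essentially verbatim: the ansatz $f^\e_\aut = -t + \rho^\e$ reducing~\eqref{eq:general_problem} to the metric planar problem~\eqref{eq:general_metric_problem}; the extraction of $\chi^\e_\aut$ via~\eqref{eq:chi_definition} using the sharp barriers of Section~\ref{sec:rho}; the Dirr--Souganidis transform $\overline\chi^\e_\aut = \chi^\e_\aut + u_\parallel W^\e$; the Skorokhod/Ikeda--Watanabe reduction to a realization $\widehat W^\e$ converging almost surely to $W$; and the half-relaxed limits closed by the comparison principle. One small caution concerns item~(iii): the sharp estimate of Lemma~\ref{lem:sharp_bounds} grows quadratically in $\xi$ and hence does \emph{not} control $\e^{2/3}\chi^\e_\aut(x,\e^{2/3}y)$ for $|y|$ large, so verifying Assumption~\ref{assumption:initial_datum} must rest on the weaker but \emph{global} bound $|\rho^\e(x,y)-y|\leq|y|/2$ of Lemma~\ref{lem:wp_weak_bounds}, not on the local ``pinching near $\{y=0\}$'' together with a ``uniform bound on $\chi^\e_\aut$'' as you phrase it. The paper explicitly flags this point, noting that~\eqref{eq:bounds_rho1} does not follow from~\eqref{eq:bounds_rho}. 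With that caveat clarified, the argument is correct and follows the paper's route.
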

The reason for the restriction $r \leq 2$ is seen in the a priori estimates of $\chi^\e$.  While we do not anticipate any issues in extending the proof to $r>2$,  this will involve some adjustments to our proof.  Since our interest is in the cases $r=1,2$, we opt for a simpler presentation and, thus, restrict to $r\in [1,2]$.

\medskip

The proof proceeds in several steps.  First we reduce to an intermediate model using the ansatz that $f^\e = - t + \rho^\e$ for a time-independent $\rho^\e$ solving the so-called metric planar problem.  Then, we extract $\chi^\e$ from $\rho^\e$ and reduce to the stronger case where $W^\e$ converges in probability to $W$.  Finally, we apply the method of half-relaxed limits to obtain convergence of $\chi^\e$ to $\chi$.

\subsubsection{Step (i): the reduction to a time-independent problem}

From the form of the claim, it is natural to seek a solution $f_\aut^\e(x,y,t) := \rho^\e(x,y) - t$, where $\rho^\e$ solves
\begin{equation}\label{eq:general_metric_problem}
\begin{cases}
	- r \frac{\e^\beta}{2} \Delta \rho^\e + r \e u_\aut \cdot \nabla \rho^\e + |\nabla \rho^\e|^r = 1 \qquad &\text{ in }~~ \R^n,\\
	\rho^\e = 0 &\text{ on }~~ \R^{n-1}\times\{0\}.
\end{cases}
\end{equation}
%

Next, we consider the existence, uniqueness, and some a priori bounds of $\rho^\e$.
\begin{lemma}\label{lem:rho}
	There exists a unique globally 
	Lipschitz solution $\rho^\e$ to~\eqref{eq:general_metric_problem} such that, uniformly for all $x \in \R^{n-1}$,
	\[\begin{split}
		\liminf_{y\to\infty} \rho^\e(x,y) \geq 0,
			\qquad \text{and} \qquad
		\limsup_{y\to-\infty} \rho^\e(x,y) \leq 0.
	\end{split}\]
	Moreover, for all $(x,y) \in \R^n$,
	\begin{equation}\label{eq:bounds_rho1}
		|\rho(x,y) - y| \leq |y|/2,
	\end{equation}
	and there exist $C_L$, $\mu_1$, $\mu_2$, and $\mu_3$, depending only on $\|u\|_{C^1}$ and $M$, such that $\Lip(\rho^\e)\leq C_L$, and, for all $(x,y) \in \R^n$,
	\begin{equation}\label{eq:bounds_rho}
		\left|\rho^\e(x,y) - \left(y - \e^{2/3} u_{\parallel} W^\e(\e^{2/3} y)\right)\right|
			\leq \e^{4/3} \mu_1 |y| + \frac{\mu_2 \e^2 y^2}{2}
				+ \e^{2/3} \mu_3 \bigg|\int_0^{y \e^{2/3}} |W^\e(y')|^2 dy'\bigg|.
	\end{equation}
\end{lemma}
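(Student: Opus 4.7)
The plan is to realize $\rho^\e$ as the unique viscosity solution of~\eqref{eq:general_metric_problem} sandwiched between explicit super- and sub-solutions built around the candidate profile $\phi_0(x,y):=y-\e^{2/3}u_\parallel(x)W^\e(\e^{2/3}y)$, and to extract the sharp bound~\eqref{eq:bounds_rho} by a careful choice of error function. Throughout, let $P[\phi]:=-\tfrac{r\e^\beta}{2}\Delta\phi+r\e u_\aut\cdot\nabla\phi+|\nabla\phi|^r$ denote the operator, and recall the smallness assumption $\e\|u\|_{C^1}\leq 1/100$ from the introduction, which keeps all the leading quantities under control.

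I would first dispatch the crude claims: existence, uniqueness, the growth bound~\eqref{eq:bounds_rho1}, the asymptotic behavior, and the Lipschitz estimate. Since $P[\lambda y]=r\e u_\parallel w\,\lambda+|\lambda|^r$ for any $\lambda\in\R$, the smallness of $\e\|u\|_\infty$ makes the affine functions $\phi^\pm(x,y):=(1\pm 1/2)y$ respectively a super- and sub-solution of $P[\phi]=1$ on $\{y>0\}$, with mirrored roles on $\{y<0\}$. Perron's method between these barriers produces $\rho^\e$, and comparison within this affine envelope delivers uniqueness, the sandwich~\eqref{eq:bounds_rho1}, and the asymptotic conclusions as $y\to\pm\infty$. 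The Lipschitz bound $\Lip(\rho^\e)\leq C_L$ follows from a Bernstein-type gradient estimate applied to $|\nabla\rho^\e|^2$, or, equivalently, by comparing $\rho^\e$ with its translates $\rho^\e(\cdot+h,\cdot+k)$ and using that the coefficients of $P$ are $C^1$ with norm controlled by $\|u\|_{C^1}$ and $M$.

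The heart of the lemma is~\eqref{eq:bounds_rho}, which I would establish by refining the above barriers to $\phi^\e_\pm:=\phi_0\pm E^\e$ with a carefully chosen error function. Using the identity $\partial_y[\e^{2/3}W^\e(\e^{2/3}y)]=\e w(y)$, a Taylor expansion around $\phi_0$ shows that $P[\phi_0]=1+R(x,y)$, with the residual $R$ decomposing into pieces of order $\e^{5/3}|W^\e|$, $\e^2\|w\|_\infty^2$, $\e^{4/3}|\nabla_x u_\parallel|^2|W^\e|^2$, and, if $\beta<\infty$, $\e^{2/3+\beta}|W^\e|+\e^{1+\beta}|w'|$, originating respectively from the transverse advection, the Taylor remainder of $(\partial_y\phi_0)^r$ combined with the parallel advection, the transverse-gradient expansion of $|\nabla\phi_0|^r$, and the viscous term. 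Since $|\nabla\phi_0|\geq 1/2$, the linearization of $|\cdot|^r$ at $\nabla\phi_0$ in the $\partial_y$ direction is a positive constant of order $r$, so any $E^\e$ with $\partial_y E^\e\geq r^{-1}|R|$ delivers a supersolution. Integrating $|R|$ from $0$ to $y$ --- using $\|w\|_\infty\leq M$ to bound the bounded pieces at $\e^{4/3}$ scale (producing $\e^{4/3}\mu_1|y|$), the deterministic inequality $|W^\e(\xi)|\leq\e^{-1/3}M|\xi|$ to convert the transverse-advection piece into a linear-in-$|y|$ contribution at scale $\e^2$ (producing $\tfrac{\mu_2\e^2 y^2}{2}$), and carrying the transverse-gradient piece to obtain $\e^{2/3}\mu_3\bigl|\int_0^{y\e^{2/3}}|W^\e|^2\,dy'\bigr|$ --- yields exactly the three terms on the right-hand side of~\eqref{eq:bounds_rho}. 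The symmetric $\phi^\e_-$ is a subsolution, and the comparison principle for~\eqref{eq:general_metric_problem} sandwiches $\rho^\e$ between the two.

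The main obstacle I anticipate is the rigorous linearization of $|\nabla\phi_0+\nabla E^\e|^r$ when $r\in(1,2)$, since $|\cdot|^r$ is smooth only away from the origin. This is controlled by the uniform lower bound $|\partial_y\phi_0|\geq 1-\e\|u_\parallel\|_\infty M\geq 1/2$, which keeps $\nabla\phi_0+\nabla E^\e$ bounded away from $0$ and validates the classical Taylor expansion pointwise. A secondary subtlety is that $|W^\e|$ grows with $|y|$ (it approximates a Brownian motion), which is why the right-hand side of~\eqref{eq:bounds_rho} is inherently $y$-dependent through the $\int|W^\e|^2$ term; this integral is not estimated a priori but carried forward, as it is precisely this quantity which will deliver the correct Brownian fluctuations when passing to the limit $\e\to 0$ to identify $\chi$ in the next step.
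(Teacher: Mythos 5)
Your approach is the same as the paper's: you exhibit the barriers $\phi_0 \pm E^\e$ with $\phi_0 = y - \e^{2/3}u_\parallel W^\e(\e^{2/3}y)$ and $E^\e = \e^{4/3}\mu_1 |y| + \tfrac{\mu_2\e^2 y^2}{2} + \e^{2/3}\mu_3\big|\int_0^{y\e^{2/3}}|W^\e|^2\big|$, which are exactly the paper's $\underline\rho,\,\overline\rho$, and your identification of the residual pieces is right. However, there is a genuine gap in the step ``any $E^\e$ with $\partial_y E^\e\geq r^{-1}|R|$ delivers a supersolution.'' This reasoning is only a \emph{linearization}: when you actually evaluate $P[\phi_0\pm E^\e]$, the term $|\nabla(\phi_0\pm E^\e)|^r$ produces quadratic errors of order $(\partial_y E^\e)^2\sim \mu_2^2\e^4 y^2$ and $\mu_3^2\e^{8/3}|W^\e|^4$, which are \emph{not} dominated by the corresponding good terms $\mu_2\e^2 y$ and $\mu_3\e^{4/3}|W^\e|^2$ once $y$ is larger than roughly $\e^{-1}$. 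As stated, your barriers are therefore not global super/subsolutions, and the comparison principle cannot be applied on all of $\R^{n-1}\times\R_+$. The paper handles this by comparing first on a truncated set $V_\e=\{y<(16\mu_3 M^4\e^2)^{-1/2}\}$, verifying separately that $\underline\rho\leq\rho$ on $\partial V_\e$ via the weak bound~\eqref{eq:weak_bounds_rho}, and then dealing with $y\geq(16\mu_3 M^4\e^2)^{-1/2}$ by a further argument. Without this localization your argument does not close.

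Two smaller points. First, the super- and sub-solution computations are not symmetric: to verify the subsolution one uses the concavity inequality $(1+x)^{r/2}\leq 1+\tfrac{r}{2}x$ directly, whereas to verify the supersolution one needs the reverse bound, which requires Taylor's theorem with an explicit control on the remainder (the paper's $E_\e$ with $|E_\e|\leq 1/2$). Your phrase ``the symmetric $\phi^\e_-$ is a subsolution'' papers over this. Second, equation~\eqref{eq:general_metric_problem} is posed on $\R^n$, not $\R^{n-1}\times(\R_-\cup\R_+)$, so after constructing the solution away from $\{y=0\}$ one must still verify that it is a viscosity solution \emph{across} the hyperplane $\{y=0\}$; the paper's Step~3 does this by computing the one-sided derivatives $\rho_y(x,0^\pm)=(1+\e u_\parallel(x)w(0))^{-1}$ via yet another barrier argument. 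Your Perron-method framing does not address this matching condition.
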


The existence and uniqueness of $\rho^\e$ is well-understood because problems like~\eqref{eq:general_metric_problem} have been studied extensively due to their use in stochastic homogenization; see, for example, the work of Armstrong and Cardaliaguet~\cite{ArmstrongCardaliaguet}, Armstrong, Cardaliaguet, and Souganidis~\cite{ArmstrongCardaliaguetSouganidis}, and Armstrong and Souganidis~\cite{ArmstrongSouganidis}, and references therein.  The sharp bound~\eqref{eq:bounds_rho} in \Cref{lem:rho}, which justifies the earlier comment about correctors, is new and requires significant effort.  The construction of sufficiently sharp sub- and super-solutions is quite involved.  The proof of \Cref{lem:rho} is presented in~\Cref{sec:rho}.

\medskip

The motivation for the weaker bound~\eqref{eq:bounds_rho1} is two-fold.  Firstly, it shows that $f^\e_\aut(\cdot,\cdot,0)$ satisfies \Cref{assumption:initial_datum}.  Secondly, it is used in the proof of \Cref{prop:geometric_td}.  Note that~\eqref{eq:bounds_rho1} does not follow from the sharper bound~\eqref{eq:bounds_rho} due to the behavior for $|y|\gg 1$.    The sharper bound is a crucial part of the proof of \Cref{prop:general}.

\subsubsection{Step (ii): the extraction of the correctors $\chi^\e_\aut$}

We change variables so that $\xi = y\e^{2/3}$ and let, for all $(x,\xi) \in \R^n$,
\begin{equation}\label{eq:chi_definition}
	\chi_\aut^\e(x,\xi)
		:= \frac{\rho^\e(x, \e^{-2/3} \xi)}{\e^{2/3}} - \frac{\xi}{\e^{4/3}}.
\end{equation}

It follows from~\eqref{eq:chi_definition} and the definition of $f^\e_\aut$ that
\begin{equation}\label{eq:full_expansion}
	f_\aut^\e(x,y,t) = y - t + \e^{2/3} \chi_\aut^\e(x, \e^{2/3} y).
\end{equation}
As a consequence, we need only understand the convergence of $\chi_\aut^\e$ as $\e\to0$ to conclude the proof of \Cref{prop:general}.

\subsubsection{Step (iii): the reduction to the case where $W^\e$ converges in probability}\label{sec:reduction}

We now reduce to the case where the random advection converges in probability instead of simply in distribution.  For this, we need the following lemma.
\begin{lemma}\label{lem:convergence}
	Suppose that~\Cref{assumption:advection} holds, and assume that $W^\e$ converges in probability to a standard Brownian motion $W$.  Let $\chi_\aut^\e$ be given by~\eqref{eq:chi_definition} with $\rho^\e$ solving~\eqref{eq:general_metric_problem}.  There exists $\Omega' \subset \Omega$ with $P(\Omega') = 1$ such that, for every $\omega \in \Omega'$, $\chi_\aut^\e(\cdot,\cdot,\omega)$ converges locally uniformly in $\R^{n-1}\times[0,\infty)$ to the solution $\chi$ of~\eqref{eq:corrector} when $\beta > 2/3$ and to the solution $\chi_{\rm visc}$ of~\eqref{eq:corrector_viscous} when $\beta = 2/3$.

\end{lemma}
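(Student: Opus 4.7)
The plan is to apply the method of half-relaxed limits for viscosity solutions, after upgrading the in-probability convergence of $W^\e$ to an a.s.\ statement. Since every subsequence of $W^\e$ admits a further a.s.\ convergent sub-subsequence with limit $W$, and since the limits $\chi$ and $\chi_{\rm visc}$ are pathwise determined by $W$, it suffices to work along any a.s.\ convergent subsequence; let $\Omega'$ denote the corresponding full-measure event on which $W^\e \to W$ locally uniformly in $\xi$, and fix $\omega \in \Omega'$. The crucial transformation is
\[
\overline\chi^\e(x,\xi) := \chi^\e_\aut(x,\xi) + u_\parallel(x) W^\e(\xi), \qquad \overline\chi(x,\xi) := \chi(x,\xi) + u_\parallel(x) W(\xi),
\]
matching the definition of stochastic viscosity solutions given at the end of~\Cref{sec:results}: the lemma is reduced to showing $\overline\chi^\e \to \overline\chi$ locally uniformly, where $\overline\chi$ is the classical viscosity solution of
\[
\overline\chi_\xi + \tfrac{1}{2}|\nabla_x \overline\chi - W(\xi)\nabla_x u_\parallel|^2 - \mathbf{1}_{\{\beta = 2/3\}}\tfrac{1}{2}\Delta_x\!\left(\overline\chi - W(\xi) u_\parallel\right) = 0
\]
in $\R^{n-1}\times(0,\infty)$ with $\overline\chi(\cdot,0) = 0$ (and $\chi_{\rm visc}$ in place of $\chi$ in the viscous case).

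Applying the a priori estimate~\eqref{eq:bounds_rho} of~\Cref{lem:rho} with $y = \e^{-2/3}\xi$ and recalling~\eqref{eq:chi_definition} yields
\[
|\overline\chi^\e(x,\xi)| \leq \mu_1 |\xi| + \tfrac{\mu_2 \xi^2}{2} + \mu_3 \int_0^{|\xi|} |W^\e(\xi')|^2\, d\xi',
\]
which is locally bounded uniformly in $\e$ on $\Omega'$; at $\xi = 0$, since $W^\e(0) = 0$, this gives $\chi^\e_\aut(\cdot,0) \equiv 0$ exactly. Next, substituting the ansatz $\rho^\e(x,y) = y + \e^{2/3}\chi^\e_\aut(x,\e^{2/3}y)$ into~\eqref{eq:general_metric_problem}, Taylor-expanding $|\nabla\rho^\e|^r/r$ around $(\nabla_x\rho^\e,\partial_y\rho^\e) = (0,1)$, and dividing through by $\e^{4/3}$, one sees that $\overline\chi^\e$ solves, in the viscosity sense, the same PDE as $\overline\chi$ but with $W^\e$ in place of $W$ and with a remainder that, when tested against a smooth function, is of order $O(\e^{1/3}) + O(\e^{\beta - 2/3})$. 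Defining the half-relaxed upper and lower limits $\overline\chi^*$ and $\overline\chi_*$ in the strengthened locally-uniform-at-the-boundary sense from the end of~\Cref{sec:results}, standard stability for viscosity solutions together with the a.s.\ uniform convergence $W^\e \to W$ on $\Omega'$ shows that $\overline\chi^*$ (resp.\ $\overline\chi_*$) is a sub- (resp.\ super-)solution of the limiting PDE with $\overline\chi^*(\cdot,0) \leq 0 \leq \overline\chi_*(\cdot,0)$. A standard comparison principle then produces $\overline\chi^* \leq \overline\chi \leq \overline\chi_*$, which, combined with the trivial $\overline\chi_* \leq \overline\chi^*$, forces equality and hence the locally uniform convergence of $\overline\chi^\e$ to $\overline\chi$.

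The main obstacle will be controlling the remainder in the PDE for $\overline\chi^\e$. The Lipschitz bound on $\rho^\e$ only translates to derivative bounds for $\chi^\e_\aut$ that blow up as $\e \to 0$ (of orders $\e^{-2/3}$ in $x$ and $\e^{-4/3}$ in $\xi$), so a direct pointwise estimate fails. The resolution, standard in viscosity-perturbation arguments, is that in the viscosity test one replaces the derivatives of $\chi^\e_\aut$ at the touching point by those of a smooth test function $\phi$; one then carefully bookkeeps each remainder term (e.g.\ $\e^{1/3} u_\perp \cdot \nabla_x \phi$, $\e^{\beta - 2/3}\Delta_x \phi$, and the potentially delicate $\e^{4/3} u_\parallel (W^\e)'(\xi) \partial_\xi\phi$, which is $O(\e)$ since $|(W^\e)'| \leq \e^{-1/3} M$) and verifies that the $\e$-prefactors dominate independently of the touching point and test function on any compact set. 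A secondary subtlety is the strengthened notion of locally uniform convergence up to $\{\xi = 0\}$ introduced in~\Cref{sec:results}: this is precisely what ensures that $\overline\chi^*$ and $\overline\chi_*$ attain the Dirichlet data, without which the comparison step cannot close.
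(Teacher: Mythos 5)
Your proposal is correct and follows essentially the same route as the paper: the same shift $\overline\chi^\e_\aut = \chi^\e_\aut + u_{\parallel}W^\e$, the same use of the a priori bound from \Cref{lem:rho} to get local boundedness of $\overline\chi^\e_\aut$, the same derivation of the viscosity inequality by testing and Taylor-expanding $|\cdot|^r$ (the paper isolates this in \Cref{lem:half_relaxed_G}), and the same comparison-principle conclusion. The only cosmetic difference is where the upgrade from distributional/probabilistic convergence of $W^\e$ to a.s.\ locally uniform convergence is handled: you extract a.s.\ subsequences inside the lemma, while the paper effectively supplies a.s.\ convergence externally via Skorokhod's representation in the step that invokes the lemma.
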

The lemma is proved in the next subsection.  On the face of it, \Cref{lem:convergence} requires stronger assumptions than~\Cref{prop:general}.  We now show how to get around this.

\begin{proof}[Proof of \Cref{prop:general} using \Cref{lem:convergence}]

Fix any sequence $\e_n \to 0$. It follows from~\cite[Theorem~4.6, Chapter 1]{IkedaWatanabe} that there exists a subsequence $\e_{n_k}\to 0$, a probability space $(\widehat \Omega, \widehat \cF,\widehat P)$, and processes $\widehat W^{\e_{n_k}}$ and $\widehat W$ defined on $(\widehat \Omega, \widehat \cF, \widehat P)$ such that
\begin{equation}\label{eq:hat_W}
	\widehat W \stackrel{d}{=} W, \quad
		\widehat W^{\e_{n_k}} \stackrel{d}{=} W^{\e_{n_k}}
			\quad \text{ and } \quad
		\widehat W^{\e_{n_k}} \toas \widehat W~ \text{ as $k\to\infty$.}
\end{equation}

\medskip

Let $\widehat \rho_k$ be the unique solution of~\eqref{eq:general_metric_problem}  given by \Cref{lem:rho} with $w$ replaced by
\[
	\widehat w_k(y) := \sigma \e_{n_k}^{1/3} \widehat W^{\e_{n_k}}_y(\e_{n_k}^{2/3} y).
\]
and, for all $(x,\xi) \in \R$, set
\[
	\widehat \chi_k(x,\xi) = \frac{\widehat \rho_k(x, \e_{n_k}^{-2/3} \xi)}{\e_{n_k}^{2/3}} - \frac{\xi}{\e_{n_k}^{4/3}}.
\]

\medskip

We consider the case $\beta > 2/3$.  \Cref{lem:convergence} yields that $\widehat \chi_k$ converges almost surely, and thus in distribution, to $\chi$.  From the well-posedness of~\eqref{eq:geometric} and the fact that $W^{\e_{n_k}} \stackrel{d}{=} \widehat W^{\e_{n_k}}$, it follows that $\widehat \chi_k \stackrel{d}{=} \chi^{\e_{n_k}}_\aut$, and thus, $\chi^{\e_{n_k}}_\aut \tod \chi$.  Since this holds for every sub-sequence $\e_k$, it follows that $\chi_\aut^\e \tod \chi$.

\medskip

When $\beta = 2/3$, the argument is similar; hence, we omit it.
\end{proof}

\subsubsection{Step (iv): the proof of \Cref{lem:convergence} using the half-relaxed limits}\label{sec:convergence}

We now prove, under the slightly stronger assumptions on the convergence of $W^\e$ to $W$, that $\chi_\aut^\e$ converges to $\chi$, if $\beta>2/3$, and to $\chi_{\rm visc}$, if $\beta=2/3$,.

\medskip

%
%

Consider the nonlinear error function $N_\e: \R^{n-1}\times \R \to \R$ given by
\[
	N_\e(p, s) := \frac{1}{r\e^{4/3}}\left(1 + r \e^{4/3} s + \frac{r \e^{4/3}}{2} |p|^2 - \left(1 + 2\e^{4/3} s +  \e^{4/3} |p|^2 +  \e^{8/3} s^2 \right)^{r/2} \right).
\]
and observe that, in the limits $\e^{4/3}s,\, \e^{4/3} |p|^2\to 0$,
\begin{equation}\label{eq:N_asymptotics}
	N_\e(p,s) = O \left(\e^{4/3} s^2\right) + O\left( \e^{4/3} |p|^4\right).
\end{equation}
Using~\eqref{eq:general_metric_problem} and~\eqref{eq:chi_definition}, we formally see that, for any $(x,\xi)\in\R^{n-1}\times\R_+$, $\chi_\aut^\e$ satisfies
\begin{equation}\label{eq:chi_equation}
\begin{split}
	&\chi_{\aut,\xi}^\e + \frac{1}{2}|\nabla_x \chi_\aut^\e|^2
		+ \epsilon^{-1/3} u_{\aut,\parallel}w(\eps^{-2/3}\cdot)
		- \e^{\beta-2/3} \Delta_x \chi_\aut^\e
		= N_\e(\nabla_x \chi_\aut^\e, \chi_{\aut,\xi}^\e)\\
		 &\quad	 - \epsilon^{1/3} u_{\aut,\perp}(\cdot,\eps^{-2/3} \cdot)\cdot \nabla_x \chi_\aut^\e
		- \epsilon u_{\parallel} w(\eps^{-2/3} \cdot) \chi_{\aut,\xi}^\e
			+  \e^{\beta+2/3} \chi^\e_{\aut,\xi\xi},
\end{split}
\end{equation}
where $u_{\aut, \parallel}$ and $u_{\aut, \perp}$ are defined in an analogous manner as $u_\aut$. 
We now justify this formal computation.  First we show that $\chi_\aut^\e$ is a viscosity super-solution of~\eqref{eq:chi_equation}.  Fix $(x_0,\xi_0) \in \R^{n-1}\times\R_+$ and a test function $\psi$ such that $\chi_\aut^\e - \psi$ has a local minimum at $(x_0,\xi_0)$ and let
\[
	\overline \psi(x, y) = y + \e^{2/3} \psi(x, \e^{2/3} y).
\]
It follows from the definition of $\chi^\e_\aut$ in~\eqref{eq:chi_definition} that $\rho^\e - \overline\psi$ has a local minimum at $(x_0, \xi_0 \e^{-2/3})$.  Thus, at $(x_0,\xi_0 \e^{-2/3})$,
\[\begin{split}
	- r \e^{2/3 + \beta} &(\Delta_x \psi + \e^{4/3} \psi_{\xi\xi}) + r \e u_{\aut} \cdot (\nabla_x \psi, 1 + \e^{4/3} \psi_\xi) + |(\nabla_x \psi, 1 + \e^{4/3} \psi_\xi)|^r\\
		&= - r\frac{\e^\beta}{2} \Delta \overline \psi + r \e ut \cdot \nabla \overline \psi + |\nabla \overline \psi|^r
		\geq 1.
\end{split}\]
Dividing by $r \e^{2/3}$ and rearranging yields
\[\begin{split}
	&\psi_\xi + \frac{1}{2}|\nabla_x \psi|^2
		+ \epsilon^{-1/3} u_{\aut, \parallel}w(\eps^{-2/3}\cdot)
		- \e^{\beta -2/3} \Delta_x \psi\\
		&\quad\geq N_\e(\nabla_x \psi,\psi_\xi)
		 	 - \epsilon^{1/3} u_{\aut, \perp}(\cdot,\eps^{-2/3} \cdot)\cdot \nabla_x \psi
			- \epsilon u_{\aut, \parallel} w(\eps^{-2/3} \cdot) \psi_\xi
			+ \e^{\beta +2/3} \psi_{\xi\xi}.
\end{split}\]
A similar argument shows that $\chi_\aut^\e$ is a sub-solution of~\eqref{eq:chi_equation}.  

\medskip

In order to work with stochastic viscosity solutions in the limit, we set
\begin{equation}\label{eq:ochi_definition}
	\ochi_\aut^\e(x,\xi)
		:= \chi_\aut^\e(x,\xi) + u_{\aut, \parallel}(x) W^\e(\xi),
\end{equation}
and, in view of~\eqref{eq:chi_definition},~\eqref{eq:ochi_definition}, and the bounds in~\Cref{lem:rho}, observe that
\begin{equation}\label{eq:chi_bound}
	\left|\ochi_\aut^\e(x,\xi)\right|
		\leq \mu_1 |\xi|
			+ \frac{\mu_2}{2} \xi^2
			+ \mu_3 \int_0^\xi |W^\e(\xi')|^2d\xi',
\end{equation}
a bound that is crucial to take the half-relaxed limits of $\chi_\aut^\e$.

\medskip

It follows from~\eqref{eq:chi_equation} that, at any point $(x,\xi) \in \R^{n-1}\times\R_+$,
\begin{equation}\label{eq:ochi_equation}
\begin{split}
	&\ochi_{\aut,\xi}^\e
			+ \frac{1}{2}|\nabla_x \ochi_\aut^\e - W^\e(\xi) \nabla_x u_{\aut\parallel}|^2
			- \e^{\beta - 2/3} \Delta_x \ochi_\aut^\e
			+ \e^{\beta - 2/3} \Delta_x u_{\aut, \parallel} W^\e(\xi)\\
		&\quad = N_\e(\nabla_x \ochi_\aut^\e - W^\e(\xi) \nabla_x u_{\aut,\parallel},\ochi_{\aut,\xi}^\e - \e^{-1/3} u_{\aut, \parallel} w(\e^{-2/3}\xi))\\
&\qquad		- \epsilon^{1/3} u_{\aut, \perp}(x,\eps^{-2/3} \xi)\cdot (\nabla_x \ochi_\aut^\e - W^\e(\xi) \nabla_x u_{\aut, \parallel})\\
&\qquad		- \epsilon u_{\aut, \parallel}w(\eps^{-2/3} \xi) ( \ochi_{\aut,\xi}^\e - \e^{-1/3}u_{\parallel} w(\e^{-2/3} \xi))
				+ \e^{\beta + 2/3} (\ochi_{\aut,\xi\xi}^\e - \e^{-1}w_y( \e^{-2/3} \xi),
\end{split}
\end{equation}
where we used that $W_\xi^\e(\xi) = \e^{-1/3} w(\e^{-2/3} \xi)$ and $W_{\xi\xi}^\e(\xi) = \e^{-1} w_y(\e^{-2/3}\xi)$.

\medskip

Furthermore, \eqref{eq:chi_bound} yields that $\ochi^\e_\aut$ is locally bounded with probability one.  Indeed, let $\Omega' \subset \Omega$ be such that $P(\Omega') = 1$ and, for all $\omega \in \Omega'$, $W(\cdot,\omega)$ is continuous and $W^\e(\cdot,\omega)$ converges to $W(\cdot,\omega)$ locally uniformly. Then $W^\e$ is locally bounded as well.  The bound on $\ochi^\e_\aut$ follows.

\medskip

As a result, for any $\omega \in \Omega'$, the classical half-relaxed limits
\begin{equation}\label{eq:half_relaxed_G}
	\ochi^*(x,\xi, \omega)
		:= \limsup_{\substack{(x',\xi') \to (x,\xi),\\ \e\to0}} \ochi_\aut^\e(x',\xi',\omega)
	\quad \text{ and }\quad
	\ochi_*(x,\xi,\omega)
		:= \liminf_{\substack{(x',\xi') \to (x,\xi),\\ \e\to0}}\ochi_\aut^\e(x',\xi',\omega),
\end{equation}
are well-defined.  By construction, $\ochi_* \leq \ochi^*$.  The key step to proving the opposite inequality is to show that these are sub- and super-solutions of the same equation.

\begin{lemma}\label{lem:half_relaxed_G}
	For each $\omega \in \Omega'$, the half-relaxed limits $\ochi^*(\cdot,\cdot,\omega)$ and $\ochi_*(\cdot,\cdot,\omega)$ satisfy repectively
	\begin{equation}\label{eq:ochiG^*}
		\begin{cases}
			\ochi_\xi^* + \frac{1}{2}|\nabla_x \ochi^* - W \nabla_x u_{\aut, \parallel}|^2 - \delta_{\frac23\beta} \Delta_x(\ochi^* - W u_{\aut, \parallel}) \leq 0 \qquad &\text{ in }~~ \R^{n-1}\times\R_+,\\
			\ochi^* = 0 &\text{ on }~~ \R^{n-1} \times\{0\},
		\end{cases}
	\end{equation}
	and
	\begin{equation}\label{eq:ochiG_*}
		\begin{cases}
			\ochi_{*,\xi} + \frac{1}{2}|\nabla_x \ochi_* - W(\xi) \nabla_x u_{\aut, \parallel}|^2 - \delta_{\frac23\beta} \Delta_x(\ochi_* - W u_{\aut, \parallel}) \geq 0 \qquad &\text{ in }~~ \R^{n-1}\times\R_+,\\
			\ochi_* = 0 &\text{ on }~~ \R^{n-1} \times\{0\}.
		\end{cases}
	\end{equation}
\end{lemma}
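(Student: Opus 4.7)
My plan is to apply the standard method of half-relaxed limits to pass from the viscosity subsolution and supersolution properties of $\ochi^\e_\aut$ for equation~\eqref{eq:ochi_equation} to the asserted inequalities for $\ochi^*$ and $\ochi_*$. I will write out the subsolution case for $\ochi^*$; the supersolution case for $\ochi_*$ is entirely symmetric (reverse maxima with minima and inequalities throughout). The boundary condition at $\xi = 0$ is obtained directly from the a priori bound~\eqref{eq:chi_bound}: on the full-measure event $\Omega'$, $W^\e$ converges locally uniformly to $W$, so $\{W^\e\}$ is locally uniformly bounded, and hence the right-hand side of~\eqref{eq:chi_bound} tends to zero uniformly in $\e$ as $\xi\to 0^+$. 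This forces $\ochi^*(x,0) = \ochi_*(x,0) = 0$.

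For the interior inequality, fix $\omega \in \Omega'$ and a test function $\psi \in C^2(\R^{n-1}\times\R_+)$ such that $\ochi^* - \psi$ has a strict local maximum at some $(x_0,\xi_0)$ with $\xi_0 > 0$. Because $\ochi^\e_\aut$ is locally uniformly bounded on $\Omega'$, a classical compactness argument produces sequences $\e_n \to 0$ and $(x_n,\xi_n) \to (x_0,\xi_0)$ along which $\ochi^{\e_n}_\aut - \psi$ attains a local maximum and $\ochi^{\e_n}_\aut(x_n,\xi_n) \to \ochi^*(x_0,\xi_0)$. Since the change of unknown $\chi \mapsto \chi + u_{\aut,\parallel}(x) W^\e(\xi)$ is smooth, $\ochi^\e_\aut$ inherits the viscosity subsolution property from $\chi^\e_\aut$, and so testing~\eqref{eq:ochi_equation} at $(x_n,\xi_n)$ with $\psi$ yields an inequality whose left-hand side is
\begin{equation*}
\psi_\xi + \frac{1}{2}\big|\nabla_x \psi - W^{\e_n}(\xi_n)\nabla_x u_{\aut,\parallel}\big|^2 - \e_n^{\beta-2/3}\Delta_x\big(\psi - W^{\e_n}(\xi_n)\, u_{\aut,\parallel}\big)
\end{equation*}
and whose right-hand side, which I call $R_n$, collects the four error terms on the right of~\eqref{eq:ochi_equation}, all evaluated at $(x_n,\xi_n)$.

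The crux is to verify that $R_n \to 0$. For the nonlinear term, the expansion~\eqref{eq:N_asymptotics} applies because its arguments $p = \nabla_x\psi - W^{\e_n}(\xi_n)\nabla_x u_{\aut,\parallel}$ and $s = \psi_\xi - \e_n^{-1/3} u_{\aut,\parallel}\, w(\e_n^{-2/3}\xi_n)$ satisfy $\e_n^{4/3}|p|^2 \to 0$ and $\e_n^{4/3}s = O(\e_n) \to 0$; then $\e_n^{4/3}s^2 = O(\e_n^{2/3})$ and $\e_n^{4/3}|p|^4 = O(\e_n^{4/3})$, so $N_{\e_n} \to 0$. The perpendicular advection term carries a prefactor $\e_n^{1/3}$; the parallel term $\e_n u_{\aut,\parallel} w(\e_n^{-2/3}\xi_n)\bigl(\psi_\xi - \e_n^{-1/3} u_{\aut,\parallel} w(\e_n^{-2/3}\xi_n)\bigr)$ is $O(\e_n)+O(\e_n^{2/3})$; and the viscous correction $\e_n^{\beta+2/3}\bigl(\psi_{\xi\xi} - \e_n^{-1} w_y(\e_n^{-2/3}\xi_n)\bigr)$ is $O(\e_n^{\beta+2/3}) + O(\e_n^{\beta-1/3})$, which vanishes precisely because $\beta \geq 2/3$. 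On the left-hand side, $W^{\e_n}(\xi_n) \to W(\xi_0)$ by local uniform convergence on $\Omega'$, while the prefactor $\e_n^{\beta-2/3}$ on the Laplacian equals $1$ when $\beta = 2/3$ and vanishes when $\beta > 2/3$, producing the Kronecker factor $\delta_{\frac{2}{3}\beta}$ in the limit. The resulting inequality is exactly the subsolution condition for~\eqref{eq:ochiG^*} tested against $\psi$ at $(x_0,\xi_0)$.

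The hardest part is the careful bookkeeping needed to conclude $R_n \to 0$ despite the singular factors $\e^{-1/3} w(\e^{-2/3}\xi)$ and $\e^{-1} w_y(\e^{-2/3}\xi)$; the decisive inputs are the sharp expansion~\eqref{eq:N_asymptotics} of the nonlinear error together with the hypothesis $\beta \geq 2/3$, which together tip every competing power of $\e_n$ in the correct direction.
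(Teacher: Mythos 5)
Your proposal is correct and follows essentially the same route as the paper: obtain the boundary condition at $\xi=0$ from the a priori bound~\eqref{eq:chi_bound} together with the local uniform boundedness of $W^\e$ on $\Omega'$, then for the interior inequality pick a test function with a strict local max, extract approximating max points $(x_n,\xi_n)$ with $\e_n\to0$, test~\eqref{eq:ochi_equation} there, and show every error term vanishes by power counting in $\e_n$ using~\eqref{eq:N_asymptotics} and $\beta \geq 2/3$, while $W^{\e_n}(\xi_n)\to W(\xi_0)$ and the factor $\e_n^{\beta-2/3}$ gives the Kronecker delta. The only difference is presentational: you spell out the bookkeeping on the right-hand side error terms more explicitly than the paper, which simply asserts they ``clearly tend to zero.''
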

\begin{proof}
	Since the proofs are similar, we only show the argument for~\eqref{eq:ochiG^*}.  In what follows we work with fixed $\omega \in \Omega'$ and, hence, suppress it for notational simplicity.
	
	\medskip
	
	We begin with the behavior of $\ochi^*$ at $\xi = 0$.  For this, we note that~\eqref{eq:chi_bound}, the continuity of $W$, and the convergence of $W^\e$ to $W$ imply that $\ochi^* = 0$ on $\R^{n-1}\times\{0\}$.
	
	\medskip
	
	Next assume that, for some test function $\psi$, $\ochi^* - \psi$ has a strict local maximum at $(x_0,\xi_0)\in \R^{n-1}\times\R_+$.  It follows from the definition of $\ochi^*$ that there exist sequences $(x_k,\xi_k) \in \R^{n-1}\times\R_+$ and $\epsilon_k > 0$ such that $\ochi^{\e_k}_\aut - \psi$ has a local maximum at $(x_k,\xi_k)$ and, as $k \to \infty$, $\e_k \to 0$, $(x_k, \xi_k) \to (x_0,\xi_0)$, and $\ochi^{\e_k}(x_k,\xi_k) - \psi(x_k,\xi_k)\to \ochi^*(x_0,\xi_0) - \psi(x_0,\xi_0)$.
	
	\medskip
	
	Using~\eqref{eq:ochi_equation}, we find that, at $(x_k,\xi_k)$,
\[\begin{split}
	&\psi^{\e_k}_\xi
			+ \frac{1}{2}|\nabla_x \psi^{\e_k} - W^{\e_k} \nabla_x u_{\aut, \parallel}|^2
			- \e_k^{\beta - 2/3} \Delta_x \psi^{\e_k}
			+ \e_k^{\beta - 2/3} \Delta_x u_{\aut, \parallel} W^{\e_k}\\
		&\quad \geq N_{\e_k}(\nabla_x \psi^{\e_k} - W^{\e_k} \nabla_x u_{\aut, \parallel},\psi_\xi^{\e_k} - \e_k^{-1/3} u_{\aut, \parallel} w(\e_k^{-2/3}\xi_k))\\
&\qquad		- {\e_k}^{1/3} u_{\aut, \perp}(x_k,\eps_k^{-2/3} \xi_k)\cdot (\nabla_x \psi^{\e_k} - W^{\e_k} \nabla_x u_{\aut, \parallel})\\
&\qquad		- {\e_k} u_{\aut, \parallel} w(\eps_k^{-2/3} \xi_k) ( \psi_\xi^{\e_k} - \e_k^{-1/3}u_{\aut, \parallel} w(\e_k^{-2/3} \xi_k))
				+ \e_k^{\beta + 2/3} (\psi_{\xi\xi}^{\e_k} - \e_k^{-1} u_{\aut, \parallel} w_y( \e_k^{-2/3} \xi_k)).
\end{split}\]
By assumption, we have that $W^{\e_k}(\xi_k)\to W(\xi_0)$.  Hence, the last two terms on the left hand side tend to zero if $\beta > 2/3$ and to $- \Delta_x (\psi - u_{\aut, \parallel} W)$ if $\beta = 2/3$.  In addition, it is clear that $W^{\e_k}(\xi_k) \nabla_x u_{\aut, \parallel}(x_k)$ converges, as $k\to\infty$, to $W(\xi_0) \nabla_x u_{\aut, \parallel}(x_0)$.

\medskip

The second, third, and fourth terms on the right hand side clearly tend to zero as $k\to\infty$, while the first term also does due to~\eqref{eq:N_asymptotics}.

\medskip

Thus, letting $k\to\infty$, we find that, at $(x_0,\xi_0)$,
	\[\begin{split}
		&\psi_\xi + \frac{1}{2}|\nabla_x \psi - W \nabla_x u_{\aut, \parallel}|^2 - \delta_{\frac23\beta}\Delta_x \left( \psi - u_{\aut, \parallel}W\right)
		\geq 0.
	\end{split}\]
\end{proof}

We now combine the above results to prove \Cref{lem:convergence}.

\begin{proof}[Proof of \Cref{lem:convergence}]
	Since the two claims are proved similarly, we only include the details for the first.  Moreover, we again fix $\omega \in \Omega'$ throughout but omit this dependence to simplify the notation.
	
	\medskip

	It follows from the comparison principle and \Cref{lem:half_relaxed_G} that $\ochi^*\leq \ochi_*$ on $\R^{n-1}\times[0,\infty)$, while, as noted before, $\ochi_* \leq \ochi^*$.  We conclude that $\ochi^* = \ochi_*$ and denote this function $\ochi$.  This equality and the definition of the half-relaxed limits~\eqref{eq:half_relaxed_G}, yields that, as $\e\to0$, $\ochi_\aut^\e$ converges to $\ochi$ locally uniformly in $\R^{n-1}\times[0,\infty)$.
	\medskip
	
	It follows from \Cref{lem:half_relaxed_G} and the fact that $\ochi^* = \ochi_* = \ochi$, that $\ochi - u_{\aut, \parallel} W$ solves~\eqref{eq:corrector}.  Uniqueness thus gives that $\chi = \ochi - u_{\aut, \parallel} W$.  Furthermore, the convergences of $W^\e$ to $W$ and $\ochi_\aut^\e$ to $\ochi$ and the definition of $\ochi_\aut^\e$ give that $\chi_\aut^\e$ converges, as $\e\to0$, locally uniformly to $\chi$.  This concludes the proof.
	
%
	
\end{proof}

\subsection{The non-autonomous case: $1 \leq \alpha < \infty$}

Arguing as in \Cref{sec:reduction}, we assume without loss of generality that, as $\e\to0$, $W^\e$ converges to $W$ in probability.  We fix $\Omega' \subset \Omega$ to be the set of full probability such that $W$ is continuous and $W^\e$ converges locally uniformly to $W$ as used in \Cref{sec:convergence}.

\medskip

We again work in the more general framework.  \Cref{thm:geometric_ti,thm:FKPP} reduce to the following result.

\begin{prop}\label{prop:general_td}
	Suppose that \Cref{assumption:advection} holds, $\alpha \geq 1$, $\beta \geq 2/3$, $r \in[1,2]$, and $\omega \in \Omega'$, and let $f^\e$ solve
	\begin{equation}\label{eq:general_problem_td}
	\begin{cases}
		f_t^\e + \e u \cdot \nabla f^\e + \frac{1}{r} |\nabla f^\e|^r + \frac{r-1}{r} = \frac{\e^\beta}{2} \Delta_x f^\e
			\qquad &\text{ in }~~ \R^n\times\R_+,\\
		f^\e = f^\e_\aut
			 &\text{ on }~~ \R^{n}\times \{0\}.
	\end{cases}
	\end{equation}
	Then, as $\e\to0$ and locally uniformly on $\R^{n-1}\times \{(\xi,\tau) \in \R \times[0,\infty): \xi \geq \tau\}$,
	\begin{equation}\label{eq:corrector_td}
		\chi^\e(x,\xi,\tau)
			:= \frac{1}{\e^{2/3}}  f^\e\left(x, \frac{\xi}{\e^{2/3}}, \frac{\tau}{\e^{2/3}}\right) - \frac{1}{\e^{4/3}} (\xi - \tau).
\end{equation}
	converges to the unique solution~$\chi$ of~\eqref{eq:corrector} when $\beta >2/3$ and to the unique solution~$\chi_{\rm visc}$ of~\eqref{eq:corrector_viscous} when $\beta=2/3$.
%
\end{prop}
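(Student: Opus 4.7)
The plan is to mirror the autonomous case (\Cref{prop:general}): derive a PDE for $\chi^\e$, obtain local boundedness on $\R^{n-1}\times\{\xi \geq \tau \geq 0\}$, pass to half-relaxed limits, identify the limiting equation, and conclude by comparison. Substituting $f^\e(x,y,t) = (y-t) + \e^{2/3}\chi^\e(x, \e^{2/3}y, \e^{2/3}t)$ into~\eqref{eq:general_problem_td}, applying the chain rule, and Taylor-expanding the $r$-th power exactly as in the derivation of~\eqref{eq:chi_equation} yields a PDE for $\chi^\e$ that differs from~\eqref{eq:chi_equation} in two ways: there is an additional $\chi^\e_\tau$ on the left coming from the time derivative of $f^\e$, and $u$ is now evaluated at time $\e^{\alpha-2/3}\tau$ rather than at $0$. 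Since $\alpha \geq 1 > 2/3$, the quantity $\e^{\alpha-2/3}\tau$ converges locally uniformly to $0$, and the $C^2$-regularity of $u$ in \Cref{assumption:advection} guarantees that the corresponding coefficients converge to their autonomous counterparts.

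To control $\chi^\e$ pointwise I would first compare $f^\e$ with $f^\e_\aut$. The $C^2$-smoothness of $u$ in $t$ gives $|u - u_\aut| \leq C\e^\alpha t$, and $f^\e_\aut$ is globally Lipschitz by \Cref{lem:rho}, so $f^\e_\aut$ fails to solve~\eqref{eq:general_problem_td} only by an error of order $\e^{1+\alpha}t$. Consequently $f^\e_\aut \pm C\e^{1+\alpha}t^2/2$ bracket $f^\e$ via the comparison principle, yielding $|f^\e - f^\e_\aut| \leq C\e^{1+\alpha}t^2$ and, after the rescaling~\eqref{eq:corrector_td}, $|\chi^\e - \chi^\e_\aut| \leq C\e^{\alpha - 1}\tau^2$; combined with~\eqref{eq:chi_bound}, this gives the required local bound. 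Setting $\ochi^\e := \chi^\e + u_{\aut,\parallel}(x) W^\e(\xi)$ and taking half-relaxed limits $\ochi^*$, $\ochi_*$ on $\R^{n-1}\times\{\xi \geq \tau \geq 0\}$, a test-function computation paralleling \Cref{lem:half_relaxed_G} shows that they are sub- and super-solutions, respectively, of the non-coercive limit equation
\begin{equation*}
\ochi_\tau + \ochi_\xi + \tfrac{1}{2}|\nabla_x \ochi - W(\xi)\nabla_x u_{\aut,\parallel}|^2 - \delta_{\frac{2}{3}\beta}\Delta_x(\ochi - W(\xi)u_{\aut,\parallel}) = 0
\end{equation*}
on $\R^{n-1}\times\{\xi > \tau \geq 0\}$, with boundary value $\ochi^* = \ochi_* = \chi(x,\xi) + u_{\aut,\parallel}(x)W(\xi)$ on $\R^{n-1}\times\{\tau = 0,\, \xi \geq 0\}$ inherited from the autonomous convergence $\chi^\e_\aut \to \chi$ already established.

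The main obstacle is that this limit equation is linearly degenerate in $(\xi,\tau)$ with characteristic direction $(1,1)$, so a direct comparison on $\R^{n-1}\times\R\times[0,\infty)$ would require data on all of $\{\tau=0,\, \xi \in \R\}$, which we do not possess. To circumvent this I would follow the hint in the introduction and pass to the characteristic variables $s := \xi - \tau$, $\tau' := \tau$, which map $\{\xi \geq \tau\}$ to $\{s \geq 0\}$ and transform $\partial_\tau + \partial_\xi$ into $\partial_{\tau'}$. For each fixed $s \geq 0$ the equation thus reduces to a classical (respectively viscous, when $\beta = 2/3$) Hamilton-Jacobi equation in $(x, \tau')$ forced by $dW(s+\tau')$, to which the stochastic viscosity framework of Lions-Souganidis applies and gives full comparison; running it parameter-by-parameter yields $\ochi^* \leq \ochi_*$ on $\R^{n-1}\times\{\xi \geq \tau \geq 0\}$, and combined with the trivial reverse inequality gives $\ochi^* = \ochi_* =: \ochi$. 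Finally, the $\tau$-independent function $\chi(x,\xi) + u_{\aut,\parallel}(x)W(\xi)$ solves the reduced problem with the correct $\tau'=0$ data, so by uniqueness it coincides with $\ochi$, yielding $\chi^\e \to \chi$ locally uniformly on $\R^{n-1}\times\{\xi \geq \tau \geq 0\}$; the $\beta = 2/3$ case is identical with $\chi$ replaced by $\chi_{\rm visc}$.
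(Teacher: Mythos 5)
Your overall strategy — the a~priori bound $|f^\e - f^\e_\aut| \le C\e^{1+\alpha}t^2$ obtained by sub/super-solution bracketing, the half-relaxed limits on $\{\xi\geq\tau\geq 0\}$, the identification of the transport-type limiting equation, and the change of variables $s=\xi-\tau$, $\tau'=\tau$ to apply comparison slice-by-slice along the characteristics — is exactly the one the paper uses. However, there is a concrete gap in the substitution $\ochi^\e := \chi^\e + u_{\aut,\parallel}(x)\,W^\e(\xi)$ which breaks the half-relaxed-limits argument precisely in the borderline case $\alpha=1$.

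The whole purpose of the substitution is to cancel the singular term $\e^{-1/3}\,u_\parallel(x,\e^{\alpha-2/3}\tau)\,w(\e^{-2/3}\xi)$ appearing in the PDE for $\chi^\e$ against $\partial_\xi\big(u\,W^\e\big)$. With your choice, $\partial_\xi\big(u_{\aut,\parallel}(x)W^\e(\xi)\big) = u_{\aut,\parallel}(x)\,\e^{-1/3}w(\e^{-2/3}\xi)$, which leaves an uncancelled remainder
\[
\e^{-1/3}\Big(u_\parallel(x,\e^{\alpha-2/3}\tau) - u_{\aut,\parallel}(x)\Big)\,w(\e^{-2/3}\xi)
\ =\ O\!\left(\e^{\alpha-1}\tau\right)\cdot w(\e^{-2/3}\xi).
\]
For $\alpha>1$ this vanishes as $\e\to 0$ (and indeed, for $\alpha>1$ one does not even need the half-relaxed limits: the bound $|\chi^\e - \chi^\e_\aut|\le C\e^{\alpha-1}\tau^2$ already gives the result directly). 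But for $\alpha=1$ it is an $O(\tau)$ quantity oscillating at scale $\e^{-2/3}$: at the touching points $(x_k,\xi_k,\tau_k)$ it evaluates to $\tau_k\cdot O(1)\cdot w(\e_k^{-2/3}\xi_k)$, which has no limit and cannot be absorbed. The resulting viscosity inequalities for $\ochi^*$ and $\ochi_*$ would each carry an uncontrolled additive error of order $\tau\,M\|u_{\parallel,t}\|_\infty$, so comparison cannot be closed. Your own remark that ``the $C^2$-regularity of $u$ guarantees the coefficients converge'' is where this is hidden: it is true of every coefficient \emph{except} the one multiplied by $\e^{-1/3}$.

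The fix is to use the $\tau$-dependent substitution $\ochi^\e := \chi^\e + u_\parallel(x,\e^{\alpha-2/3}\tau)\,W^\e(\xi)$ (as the paper does, with $\e^{1/3}\tau$ for $\alpha=1$). Then $\partial_\xi$ of the added term cancels the white noise exactly, while the extra $\partial_\tau$ contribution is $\e^{\alpha-2/3}\,u_{\parallel,t}\,W^\e(\xi) = O(\e^{1/3})$ (when $\alpha=1$) and the $\nabla_x$ contribution is $W^\e(\xi)\nabla_x u_\parallel(\cdot,\e^{1/3}\tau)$, both of which converge locally uniformly. With that corrected substitution, the rest of your proof — boundary data at $\tau=0$ from the autonomous convergence via the $O(\tau^2)$ bound, the characteristic change of variables, and comparison along each ray $\xi-\tau=\text{const.}\geq 0$ — goes through as you outline.
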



\subsubsection{A priori bounds on $f^\e$}\label{sec:a_priori_td}

\begin{lemma}\label{lem:a_priori_td}
	There exists $C>0$, which is independent of $\e$, such that, for all $(x,y)\in \R^n$,
\[
	|f^\e(x,y,t) - f^\e_\aut(x,y,t)| \leq C\|u\|_{C^1} \e^{1+\alpha} t^2.
\]
\end{lemma}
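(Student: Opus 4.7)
My plan is to control the difference $f^\e - f^\e_\aut$ by viscosity comparison using barriers of the form $f^\e_\aut(x,y,t) \pm g(t)$, where $g$ is a purely time-dependent correction large enough to absorb the discrepancy between the time-dependent advection $u(\cdot,\cdot,\e^\alpha t)$ and its frozen version $u_\aut(\cdot,\cdot) = u(\cdot,\cdot,0)$. The two essential inputs are: (a) by \Cref{lem:rho}, the spatial gradient of $f^\e_\aut = y - t + \e^{2/3}\chi^\e_\aut(\cdot,\e^{2/3}\,\cdot)$ equals $\nabla \rho^\e$ and is therefore bounded by $C_L$ uniformly in $\e$; and (b) by \Cref{assumption:advection} and the mean value theorem, $\|u(\cdot,\cdot,\e^\alpha t) - u_\aut(\cdot,\cdot)\|_\infty \leq \|u\|_{C^1}\,\e^\alpha t$.

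For the super-solution, I set $\overline f^\e := f^\e_\aut + g$ and consider a smooth test function $\psi$ such that $\overline f^\e - \psi$ has a local minimum at some $(x_0,y_0,t_0)$ with $t_0 > 0$. Equivalently $f^\e_\aut - (\psi - g)$ attains a local minimum at that point, so the viscosity super-solution property for~\eqref{eq:general_problem} yields, at $(x_0,y_0,t_0)$,
\begin{equation*}
\psi_t - g'(t_0) + \e\, u_\aut \cdot \nabla \psi + \tfrac{1}{r}|\nabla \psi|^r + \tfrac{r-1}{r} - \tfrac{\e^\beta}{2}\Delta_x \psi \;\geq\; 0.
\end{equation*}
Since $\psi - g$ touches $f^\e_\aut$ from below and $g$ is spatially constant, $\nabla\psi(x_0,y_0,t_0)$ lies in the spatial subdifferential of $f^\e_\aut$ at that point, so $|\nabla\psi(x_0,y_0,t_0)| \leq C_L$. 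Adding the advection-perturbation error $\e(u - u_\aut)\cdot\nabla\psi$, which is bounded by $C_L\|u\|_{C^1}\,\e^{1+\alpha} t_0$, the inequality becomes the super-solution inequality for~\eqref{eq:general_problem_td} provided $g'(t) \geq C_L\|u\|_{C^1}\,\e^{1+\alpha} t$. The choice $g(t) := \tfrac{C_L}{2}\|u\|_{C^1}\,\e^{1+\alpha} t^2$ works.

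The sub-solution bound for $\underline f^\e := f^\e_\aut - g$ is symmetric. Since $\underline f^\e(\cdot,\cdot,0) = f^\e(\cdot,\cdot,0) = \overline f^\e(\cdot,\cdot,0)$ and $\overline f^\e - \underline f^\e = 2g(t)$ is bounded on each slab $\R^n \times [0,T]$, the viscosity comparison principle applied to~\eqref{eq:general_problem_td} gives $\underline f^\e \leq f^\e \leq \overline f^\e$, which is exactly the claim with $C = C_L/2$. The main subtlety is the unboundedness of the functions $f^\e$ and $f^\e_\aut$, but since their relevant differences are bounded on each slab and the Hamiltonian $\tfrac{1}{r}|p|^r$ (with either vanishing or $O(\e^\beta)$ diffusion) admits comparison for Lipschitz sub/super-solutions whose difference is bounded, this is standard. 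Everything else reduces to the uniform Lipschitz estimate on $f^\e_\aut$ supplied by \Cref{lem:rho}, so I expect the argument to go through uniformly for $r \in [1,2]$ and $\beta \in [2/3,\infty]$ without further technical difficulties.
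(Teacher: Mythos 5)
Your proposal is correct and follows essentially the same route as the paper: construct barriers of the form $f^\e_\aut \pm C\,\|u\|_{C^1}\e^{1+\alpha}t^2$, verify they are super/sub-solutions of~\eqref{eq:general_problem_td} using the uniform Lipschitz bound $\Lip(\rho^\e)\le C_L$ from \Cref{lem:rho} together with $\|u(\cdot,\cdot,\e^\alpha t) - u_\aut\|_\infty \le \|u\|_{C^1}\e^\alpha t$, and conclude by comparison. The paper simply writes the super-solution check as a formal pointwise computation with $\nabla f^\e_\aut$, whereas you make the viscosity argument explicit by transferring the Lipschitz bound to the test function $\psi$ at the touching point; these are the same step, and your version is the more rigorous reading of it. The slightly different constant ($C_L/2$ versus the paper's $C_L$) is immaterial, and your remark on comparison for unbounded solutions with bounded difference is a reasonable point the paper leaves implicit.
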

\begin{proof}
Let $\rho^\e$ be the solution of~\eqref{eq:general_metric_problem}.  It follows from \Cref{lem:rho} that $\|\nabla \rho^\e\|_{\infty} \leq  C_L$, for some $C_L>0$ that does not depend on $\e$.  Recalling that $f^\e_\aut = \rho^\e - t$, we find $\|D f^\e_\aut\|_\infty \leq C_L$.

\medskip

To prove the claim, we show that $\overline f^\e(x,y,t) := f^\e_\aut(x,y,t) + C_L \|u\|_{C^1}\e^{1+\alpha} t^2$ and $\underline f^\e(x,y,t) := f^\e_\aut(x,y,t) - C_L \|u\|_{C^1}\e^{1+\alpha} t^2$ are, respectively, super- and sub-solutions of~\eqref{eq:general_problem_td}.  Once this is established, the claim follows by a standard application of the comparison principle.  The proofs are similar so we only show the upper bound.

\medskip

A straightforward computation and an application of Taylor's theorem yield
\[\begin{split}
	\overline f_t^\e + \e u \cdot \nabla \overline f^\e + \frac{1}{r} |\nabla \overline f^\e|^r + \frac{r-1}{r} - \frac{\e^\beta}{2} \Delta_x \overline f^\e
		&= \e (u - u_\aut) \cdot \nabla f_\aut^\e + 2 C_L \|u\|_{C^1} \e^{1+\alpha}  t\\
		&\geq - \e (\|u\|_{C^1} \e^\alpha t) \|\nabla f_\aut^\e\|_\infty + 2 C_L \|u\|_{C^1} \e^{1+\alpha} t
		\geq 0,
\end{split}\]
that is, $\overline f^\e$ is a super-solution of \eqref{eq:general_problem_td}, as claimed.
\end{proof}

At this point, we are able to conclude the proof in the case where $\alpha > 1$.
\begin{proof}[Proof of \Cref{prop:general_td} for $\alpha > 1$]
	Combining the definition of $\chi^\e$ with the estimates of \Cref{lem:a_priori_td}, we find $C>0$, which is independent of $\e$, such that, for all $(x,\xi,\tau) \in \R^n \times\R_+$,
	\[
		|\chi^\e(x,\xi,\tau) - \chi^\e_\aut(x,\xi)|
			\leq C \e^{\alpha - 1} \tau^2.
	\]
	Notice that $\alpha - 1 > 0$.  The result then follows from \Cref{prop:general}, which yields the convergence of $\chi^\e_\aut$ to $\chi$ if $\beta>2/3$ and to $\chi_{\rm visc}$ if $\beta = 2/3$.
\end{proof}


\subsubsection{The half-relaxed limits when $\alpha = 1$}

 First, in anticipation of the limiting equation, we introduce
\begin{equation}\label{eq:ochi_td}
	\overline \chi^\e(x,\xi,\tau)
		:= \chi^\e(x,\xi,\tau)  + u_{\parallel}(x, \e^{1/3} \tau)W^\e(\xi).
\end{equation}
Arguing as for~\eqref{eq:ochi_equation}, we find
\begin{equation}\label{eq:ochi_equation_td}
\begin{split}
	&\ochi_\tau^\e + \ochi_\xi^\e
			+ \frac{1}{2}|\nabla_x \ochi^\e - W^\e \nabla_x u_{\parallel}|^2
			- \e^{\beta - 2/3} \Delta_x \ochi^\e
			+ \e^{\beta - 2/3} \Delta_x u_{\parallel} W^\e\\
		&\quad = N_\e(\nabla_x \ochi^\e - W^\e \nabla_x u_{\parallel},\ochi_\xi^\e - \e^{-1/3} u_{\parallel} w(\e^{-2/3}\cdot))\\
&\qquad		
				- \epsilon^{1/3} u_{\perp}(\cdot,\eps^{-2/3} \cdot)\cdot (\nabla_x \ochi^\e - W^\e \nabla_x u_{\parallel})
		- \epsilon u_{\parallel}w(\eps^{-2/3} \cdot) (\ochi_\xi^\e - \e^{-1/3}u_{ \parallel} w(\e^{-2/3} \cdot))\\
		&\qquad
				+ \e^{\beta + 2/3} (\ochi_{\xi\xi}^\e - \e^{-1} w_y( \e^{-2/3} \cdot)
				+ \e^{1/3}u_{\parallel,t} W^\e.
\end{split}
\end{equation}
Notice that~\eqref{eq:ochi_equation_td} is the same as~\eqref{eq:ochi_equation} except for the additional time derivative of $\ochi^\e$ on the left, the last term on the right, and the fact that $u$ is dependent on $t$.

\medskip

It follows from \Cref{lem:a_priori_td} that there exists $C>0$, which is independent of $\e$, such that, for every $(x,y,t)\in \R^n\times \R_+$,
\begin{equation}\label{eq:initial_datum_td}
	|\ochi^\e(x,\xi,\tau) - \ochi^\e_\aut(x,\xi,\tau)| \leq C \tau^2.
\end{equation}
Combining this with~\eqref{eq:chi_bound}, we find that $\ochi^\e$ is locally bounded in $\R^n\times \R_+$.  Thus, the half-relaxed limits
\begin{equation}\label{eq:half_relaxed_td}
	\ochi^*(x,\xi,\tau)
		:= \limsup_{\substack{(x',\xi',\tau') \to (x,\xi,\tau),\\ \e\to0}} \ochi^\e(x',\xi',\tau')
	\quad \text{and}\quad
	\ochi_*(x,\xi,\tau)
		:= \liminf_{\substack{(x',\xi',\tau') \to (x,\xi,\tau),\\ \e\to0}}\ochi^\e(x',\xi',\tau')
\end{equation}
are well-defined.

\medskip

Again, arguing as in the proof of \Cref{lem:half_relaxed_G}, we obtain the following result.
\begin{lemma}\label{lem:half_relaxed_td}
	For $\omega \in \Omega'$, the half-relaxed limits $\ochi^*(\cdot,\cdot,\omega)$ and $\ochi_*(\cdot,\cdot,\omega)$ satisfy, repectively
	\begin{equation}\label{eq:ochiG^*_td}
		\begin{cases}
			\ochi_\tau^* + \ochi_\xi^* + \frac{1}{2}|\nabla_x \ochi^* - W \nabla_x u_{\aut,\parallel}|^2 - \delta_{\beta \frac23} \Delta_x(\overline \chi^* - W u_{\aut,\parallel}) \leq 0  \qquad&\text{in } ~~\R^{n}\times\R_+,\\
			\ochi^* = \chi_\beta + u_{\aut,\parallel} W
				&\hspace{-.78in}\text{on } ~~ \R^{n-1}\times [0,\infty) \times\{0\},
		\end{cases}
	\end{equation}
	and
	\begin{equation}\label{eq:ochiG_*_td}
		\begin{cases}
			\ochi_{*,\tau} + \ochi_{*,\xi} + \frac{1}{2}|\nabla_x \ochi_* - W \nabla_x u_{\aut,\parallel}|^2 - \delta_{\beta \frac23} \Delta_x(\overline \chi^* - W u_{\aut,\parallel}) \geq 0  \qquad&\text{in } ~~ \R^n\times\R_+,\\
			\ochi_* = \chi_\beta + u_{\aut, \parallel} W &\hspace{-.78in}\text{on } ~~\R^{n-1} \times [0,\infty) \times\{0\},
		\end{cases}
	\end{equation}
where $\chi_\beta$ is $\chi$ when $\beta > 2/3$ and $\chi_{\rm visc}$ when $\beta = 2/3$.
\end{lemma}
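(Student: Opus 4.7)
The plan is to adapt the argument used to prove \Cref{lem:half_relaxed_G} to the non-autonomous setting, since the structure of~\eqref{eq:ochi_equation_td} differs from that of~\eqref{eq:ochi_equation} only by (a)~the extra transport term $\ochi_\tau^\e$ on the left, (b)~the extra term $\e^{1/3}u_{\parallel,t}W^\e$ on the right, and (c)~the time dependence of $u_\parallel$ and $u_\perp$. I will verify the interior inequalities and the boundary condition at $\tau=0$ separately, working throughout with a fixed $\omega \in \Omega'$ and suppressing this dependence to ease notation.

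For the boundary condition, I observe that by the definition of $\chi^\e$ in~\eqref{eq:corrector_td} and the fact that $f^\e = f^\e_\aut$ on $\R^n\times\{0\}$, we have $\chi^\e(x,\xi,0) = \chi^\e_\aut(x,\xi)$ and hence $\ochi^\e(x,\xi,0) = \ochi^\e_\aut(x,\xi)$ on $\R^{n-1}\times[0,\infty)\times\{0\}$. \Cref{prop:general} (used here under the in-probability convergence reduction from \Cref{sec:reduction}) together with the identity $\ochi^\e_\aut = \chi^\e_\aut + u_{\aut,\parallel} W^\e$ gives that $\ochi^\e_\aut$ converges locally uniformly on $\R^{n-1}\times[0,\infty)$ to $\chi_\beta + u_{\aut,\parallel} W$, from which the identifications $\ochi^* = \ochi_* = \chi_\beta + u_{\aut,\parallel}W$ at $\tau=0$ follow immediately from the definition~\eqref{eq:half_relaxed_td} of the half-relaxed limits.

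For the interior PDE inequalities, I treat~\eqref{eq:ochiG^*_td}; the argument for~\eqref{eq:ochiG_*_td} is symmetric. Assume $\ochi^* - \psi$ has a strict local maximum at $(x_0,\xi_0,\tau_0)$ with $\tau_0 > 0$. By the standard stability machinery for half-relaxed limits, there exist sequences $\e_k \to 0$ and $(x_k,\xi_k,\tau_k) \to (x_0,\xi_0,\tau_0)$ such that $\ochi^{\e_k} - \psi$ attains a local maximum at $(x_k,\xi_k,\tau_k)$ and the values converge. Evaluating the viscosity inequality satisfied by $\ochi^{\e_k}$ corresponding to~\eqref{eq:ochi_equation_td} at $(x_k,\xi_k,\tau_k)$ yields an inequality in the derivatives of $\psi$. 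I then pass to the limit $k \to \infty$ exactly as in the autonomous case: the locally uniform convergence $W^{\e_k} \to W$ handles the $W^\e$-dependent coefficients; the diffusive term $\e^{\beta-2/3}\Delta_x(\cdot)$ vanishes when $\beta>2/3$ and gives $-\Delta_x(\psi - u_{\aut,\parallel}W)$ when $\beta = 2/3$; the nonlinear error $N_{\e_k}$ vanishes by~\eqref{eq:N_asymptotics}; and the terms with prefactors $\e^{1/3}$, $\e$, $\e^{\beta+2/3}$ (including the new term $\e^{1/3}u_{\parallel,t}W^{\e_k}$) vanish, with time-dependent $u_\parallel(\cdot, \e^{1/3}\tau_k), u_\perp$ converging to their values at $t=0$ since $\e^{1/3}\tau_k \to 0$.

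The main obstacle is ensuring that the test-function reductions survive the non-standard behavior of the PDE as $\tau\to 0^+$ and at the boundary of the set $\{\xi \geq \tau\}$. Since we only control $\ochi^\e$ locally uniformly on $\R^{n-1} \times \{\xi \geq \tau\}$ and have no information for $\xi < 0$, I must confirm that the limit procedure is genuinely local, which it is: both the a priori bound~\eqref{eq:chi_bound} combined with~\eqref{eq:initial_datum_td} and the convergence machinery only require that $(x_k,\xi_k,\tau_k)$ stay in a compact subset of $\R^{n-1}\times\R\times\R_+$, with $\xi_k \geq \tau_k + o(1)$. The non-coerciveness in $\xi$ is not an obstacle here because the PDE only needs to hold in the sense of viscosity solutions; the comparison issues deferred to~$\{\xi<0\}$ are handled in the subsequent argument using the characteristics $\xi - \tau = \mathrm{const}$, not inside this lemma.
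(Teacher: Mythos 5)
Your proposal takes the same overall route as the paper: the interior viscosity inequalities are inherited verbatim from the autonomous argument of \Cref{lem:half_relaxed_G}, with the extra terms ($\ochi^\e_\tau$, $\e^{1/3}u_{\parallel,t}W^\e$, the slow time dependence $u_\parallel(\cdot,\e^{1/3}\tau)$) either passing to the limit or vanishing, and the boundary condition is obtained from \Cref{prop:general}. This is exactly what the paper does, so the interior part of your write-up is fine.

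There is, however, a gap in your boundary-condition argument. You observe that $\ochi^\e(x,\xi,0)=\ochi^\e_\aut(x,\xi)$ and that $\ochi^\e_\aut \to \chi_\beta + u_{\aut,\parallel}W$ locally uniformly, and then claim the identification $\ochi^*=\ochi_*=\chi_\beta+u_{\aut,\parallel}W$ at $\tau=0$ ``follows immediately from the definition of the half-relaxed limits.'' It does not: by~\eqref{eq:half_relaxed_td}, $\ochi^*(x,\xi,0)$ is a $\limsup$ over sequences $(x',\xi',\tau')\to(x,\xi,0)$ with $\tau'>0$ allowed, so convergence of the restriction of $\ochi^\e$ to $\{\tau=0\}$ controls only part of the approaching sequences. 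To rule out a jump of $\ochi^\e$ near $\tau=0$ you must invoke the a priori estimate~\eqref{eq:initial_datum_td}, $|\ochi^\e(x,\xi,\tau)-\ochi^\e_\aut(x,\xi)|\le C\tau^2$, which forces $\ochi^\e(x',\xi',\tau')=\ochi^\e_\aut(x',\xi')+O(\tau'^2)$ uniformly, and only then does the locally uniform convergence of $\ochi^\e_\aut$ from \Cref{prop:general} yield the boundary values of the half-relaxed limits. You cite~\eqref{eq:initial_datum_td} later, but in the service of a different concern (locality/compactness of the test-point sequences), not to close this step. The paper's proof explicitly records that~\eqref{eq:initial_datum_td} together with \Cref{prop:general} is what handles the initial data; your proposal should do the same.
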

\begin{proof}
The only difference between the proof of~\eqref{eq:ochiG^*_td} and~\eqref{eq:ochiG_*_td} and that of the analogous claims in \Cref{lem:half_relaxed_G} is about the initial data. This is, however, handled using~\eqref{eq:initial_datum_td} and \Cref{prop:general}, which gives the convergence of $\chi^\e_\aut + u_{\aut,\parallel} W^\e$ to $\chi + u_{\aut,\parallel}W^\e$ and $\chi_{\rm visc} + u_{\aut,\parallel}W^\e$ when $\beta > 2/3$ and $\beta = 2/3$ respectively.  We omit the rest of the details.

%
\end{proof}

\subsubsection{The proof of \Cref{prop:general_td} when $\alpha=1$}

 We now finish the proof of \Cref{prop:general_td} when $\alpha = 1$.  Recall the case when $\alpha >1$ was dealt with in \Cref{sec:a_priori_td}.
 
The natural way to proceed is to use the comparison principle, as above, to conclude that $\ochi^* = \ochi_*$.  While~\eqref{eq:ochiG^*_td} and~\eqref{eq:ochiG_*_td} enjoy the comparison principle, we do not have any ordering of $\overline \chi_*$ and $\overline\chi^*$ when $\xi < 0$ and, thus, cannot immediately apply comparison.  To overcome this, we apply a simple transformation that allows to use the comparison principle along rays where $\xi - \tau$ is constant.
 
\begin{proof}[Proof of \Cref{prop:general_td} when $\alpha = 1$]
	Throughout this proof, we fix $\omega \in \Omega'$ and suppress the dependence on $\omega$.
		
	\medskip
	
 We first show that, for any fixed $\xi_0 \geq 0$, $\ochi^* = \ochi_*$ on $\R^{n-1}\times R_{\xi_0}$, where $R_{\xi_0} := \{(\xi,\tau) \in \R \times [0,\infty): \xi - \tau = \xi_0\}$.  
 
 \medskip
 
 Let
	\begin{equation}\label{eq:chiChi}
		\Chi^*(x,\zeta, \tau) := \ochi^*(x, \zeta + \tau, \tau),~~
		\Chi_*(x,\zeta,\tau) := \ochi_*(x,\zeta + \tau, \tau),
			~~\text{and}~~
		\mathcal{W}(\zeta,\tau) = W(\zeta + \tau).
	\end{equation}
	We claim that
	\begin{equation}\label{eq:Chi^*}
		\begin{cases}
			\Chi_\tau^* + \frac{1}{2}|\nabla_x \Chi^* - \cW \nabla_x u_{\aut,\parallel}|^2 - \delta_{\beta \frac23} \Delta_x(\Chi^* - \cW u_{\aut,\parallel}) \leq 0  \quad&\text{in } ~\R^{n-1}\times \{\xi_0\} \times\R_+,\\
			\Chi^* = \chi_\beta + u_{\aut,\parallel} \cW
				&\text{on } ~ \R^{n-1}\times \{\xi_0\} \times\{0\},
		\end{cases}
	\end{equation}
	and
	\begin{equation}\label{eq:Chi_*}
		\begin{cases}
			\Chi_{*,\tau} + \frac{1}{2}|\nabla_x \Chi_* - \cW \nabla_x u_{\aut,\parallel}|^2 - \delta_{\beta \frac23} \Delta_x(\Chi^* - \cW u_{\aut,\parallel}) \geq 0  ~~&\text{in } ~ \R^{n-1}\times\{\xi_0\}\times\R_+,\\
			\Chi_* = \chi_\beta + u_{\aut, \parallel} \cW &\text{on } ~\R^{n-1} \times \{\xi_0\} \times\{0\}.
		\end{cases}
	\end{equation}
	The proofs of~\eqref{eq:Chi^*} and~\eqref{eq:Chi_*} are similar so we omit the one for~\eqref{eq:Chi_*}.  Assume that, for some test function $\Psi$, $\Chi^*(\cdot,\xi_0,\cdot) - \Psi$ has a strict local maximum at $(x_0, \tau_0) \in \R^{n-1}\times \R_+$.  For any $\theta >0$, let
	\[
		\Psi_\theta(x,\zeta, \tau) := \Psi(x,\tau) + \frac{1}{\theta}( \zeta - \xi_0)^4.
	\]
	Due to~\eqref{eq:bounds_rho1}, if $\theta$ is sufficiently small, then there exists a local maximum of $\Chi^* - \Psi_\theta$ at some point $(x_\theta, \zeta_\theta, t_\theta)$, and, furthermore, as $\theta\to 0$, $(x_\theta,\zeta_\theta,t_\theta) \to (x_0, \xi_0,t_0)$.
	
	\medskip
	
	Let $\psi_\theta(x,\xi,\tau) = \Psi_\theta(x, \xi - \tau, \tau)$.   It follows from the definition of $\Chi^*$ and the choice of $(x_\theta, \zeta_\theta, \tau_\theta)$ that $\ochi^* - \psi_\theta$ has a local maximum at $(x_\theta, \zeta_\theta + \tau_\theta, \tau_\theta)$.  Due to~\eqref{eq:ochiG^*}, we find, at $(x_\theta, \zeta_\theta + \tau_\theta, \tau_\theta)$,
	\[
		\psi_{\theta,\tau} + \psi_{\theta,\xi} + \frac{1}{2} |D_x \psi_\theta - W D_x u_{\aut, \parallel}|^2 - \delta_{\beta \frac23}(\psi_\theta - W u_{\aut,\parallel}) \leq 0.
	\]
	This implies that, at $(x_\theta, \zeta_\theta, \tau_\theta)$,
	\[\begin{split}
		0 &\geq \Psi_{\theta,\tau} + \frac{1}{2} |D_x \Psi_\theta - \cW D_x u_{\aut, \parallel}|^2 - \delta_{\beta \frac23}(\Psi_\theta - \cW u_{\aut,\parallel})\\
			&= \Psi_{\tau} + \frac{1}{2} |D_x \Psi - \cW D_x u_{\aut, \parallel}|^2 - \delta_{\beta \frac23}(\Psi - \cW u_{\aut,\parallel}),
	\end{split}\]
	where we used the relationships between $\psi_\theta$, $\Psi_\theta$, and $\Psi$, as well as the relationship between $W$ and $\cW$.  We conclude that~\eqref{eq:Chi^*} holds by letting $\theta\to0$.
	
	\medskip
	
	Due to~\eqref{eq:Chi^*} and~\eqref{eq:Chi_*} and the fact that $\Chi^*(x, \xi_0, 0) = \chi(x,\xi_0) = \Chi_*(x,\xi_0,0)$ for all $x\in \R^{n-1}$, the comparison principle implies that $\Chi^* \leq \Chi_*$ in $\R^{n-1} \times \{\xi_0\} \times \R_+$.  Hence, by~\eqref{eq:chiChi}, $\ochi^* \leq \ochi_*$ on $\R^{n-1}\times R_{\xi_0}$.
	
	\medskip
	
	On the other hand, we have $\ochi_* \leq \ochi^*$ by construction.  Thus, $\ochi^* = \ochi_*$ on $\R^{n-1}\times R_{\xi_0}$.  
	
	\medskip
	
	Moreover, since $\chi_\beta + u_{\aut,\parallel} W$ satisfies both~\eqref{eq:ochiG^*} and~\eqref{eq:ochiG_*} on $\R^{n-1}\times\R_+ \times \R_+$ similar arguments show that $\ochi_* = \ochi^* = \chi + u_{\aut,\parallel} W$ on $\R^{n-1}\times R_{\xi_0}$.
	
	\medskip
	
	This holds for all $\xi_0 \geq 0$.  As a result, $\ochi^* = \ochi_* = \chi_\beta + u_{\aut,\parallel} W$ on $\R^{n-1}\times \{(\xi,\tau) : \xi  \geq \tau \geq 0\}$, which implies that $\ochi^\e$ converges locally uniformly on $\R^{n-1}\times \{(\xi,\tau) : \xi  \geq \tau \geq 0\}$ to $\chi_\beta + u_{\aut,\parallel} W$.    The proof is finished by noting that the locally uniform convergence of $\chi^\e$ to $\chi_\beta$ follows from the combination of this and the convergence of $W^\e$ to $W$.

\end{proof}

\section{Front asymptotics for the initial value problem: the G-equation}\label{sec:G_ivp}

We show that the asymptotics for the front of the perturbed traveling wave solutions $G^\e_\ptw$ yield the asymptotics for solutions with more general initial datum; that is, we prove \Cref{prop:geometric_td}.

\begin{proof}[Proof of \Cref{prop:geometric_td}]
With $\underline G$ and $\overline G$ as in \Cref{assumption:initial_datum}, let $G^\e_\ptw$ be the solution constructed in \Cref{thm:geometric_ti}.  The goal is to create sub- and super-solutions using these functions.

\medskip

Fix $\delta \in (0,1/2$ and let $\underline \phi_\delta \in C^1(\R)$ be an approximation of
\[
	\underline \phi(y) := \begin{cases}
				\underline G(y/2), \qquad &\text{ if } ~~ y \geq 0,\\
				\underline G(2y), &\text{ if } ~~ y < 0,
			\end{cases}
\]
such that $\underline \phi_\delta = \underline \phi$ on $\R\times(-\delta,\delta)$ and $\underline \phi_\delta'>0$ in $\R$.  Furthermore, we may assume that $\|\underline \phi_\delta\|_{C^{0,1}(-1,1)} \leq 2 \|\phi\|_{C^{0,1}(-1,1)}$.

\medskip

Let $C_\phi = 8 \|\underline \phi\|_{C^{0,1}(-1,1)}$, notice that $C_\phi \geq 4 \|\underline \phi_\delta\|_{C^{0,1}(-1,1)}$, and define
\[
	\underline \mu_\delta := \underline \phi_\delta \circ G_\ptw^\e - 2\|\phi\|_{C^{0,1}(-1,1)} \delta
	\qquad \text{ and } \qquad
	\underline \mu := \underline \phi \circ G_\ptw^\e.
\]
It is immediate that $\{\underline\mu \leq 0\} = \{G_\ptw^\e \leq 0\}$ and $\{ \underline \mu = 0\} = \{G_\ptw^\e = 0\}$.

\medskip

We show that $\underline \mu_\delta$ is a sub-solution of~\eqref{eq:geometric}.  Indeed, fix any test function $\psi$ and any point $(x_0,y_0,t_0) \in \R^{n-1}\times \R\times \R_+$ such that $\underline \mu_\delta - \psi$ has a strict local maximum at $(x_0,y_0,t_0)$.  Since $\underline \phi_\delta$ is strictly increasing, it follows that $G_\ptw^\e - \underline \phi_\delta^{-1}\circ \psi$ has a strict local maximum at $(x_0,y_0,t_0)$.  Since $\underline \phi_\delta^{-1}$ is $C^1$, $\underline \phi_\delta^{-1}\circ \psi$ is a valid test function and, hence, we find that, at $(x_0,y_0,t_0)$,
\[
	\left(\underline \phi_\delta^{-1} \circ \psi\right)_t
		+ \e u \cdot \nabla \left(\underline \phi_\delta^{-1} \circ \psi\right)
		+ |\nabla \left(\underline \phi_\delta^{-1} \circ \psi\right)|
		\leq 0.
\]
Using only the chain rule and the fact that $\underline \phi_\delta' > 0$, we observe that, at $(x_0,y_0,t_0)$,
\[
	\psi_t + \e u \cdot \nabla \psi + |\nabla \psi| \leq 0.
\]

\medskip

Next, we claim that $\underline \mu_\delta \leq G^\e$ on $\R^n \times \{0\}$.  Indeed, we fix any $(x,y) \in \R^n$.  Since the proofs for $y \geq 0$ and $y<0$ are handled similarly, we concentrate on the former case.  If $y \geq \delta$, then
\[
	\underline\mu_\delta(x,y)
		= \underline \phi_\delta( G^\e_\ptw(x,y)) - C_\phi \delta
		\leq \underline \phi_\delta(G^\e_\ptw(x,y))
		\leq \underline \phi_\delta\left(\frac{3y}{2}\right)
		= \underline G\left(\frac{3y}{4}\right)
		\leq \underline G(y)
		\leq G^\e(x,y,0).
\]
The first inequality follows from the fact that $C_\phi \delta \geq 0$.  The second is due to~\eqref{eq:bounds_rho1} and that $\underline \phi_\delta'>0$.  That $\underline G$ is increasing yields the third, while the last is due to \Cref{assumption:initial_datum}.  On the other hand, if $y \in [0,\delta)$,
\[
	\underline \mu_\delta(x,y)
		\leq \underline \phi_\delta\left( \frac{3y}{2}\right) - C_\phi \delta
		\leq \|\underline \phi_\delta\|_{C^{0,1}(-1,1)}\left( \frac{3y}{2}+\delta\right) - C_\phi \delta
		\leq 0
		\leq G^\e(x,y,0).
\]
The first inequality again uses~\eqref{eq:bounds_rho1} and the fact that $\underline \phi_\delta'>0$.  The second is a consequence of the definition of the Lipschitz norm and the fact that $\phi_\delta$ must take the value $0$ somewhere in $(-\delta,\delta)$.   That $y < \delta$ and $C_\phi \geq 4 \|\underline \phi_\delta\|_{C^{0,1}(-1,1)}$ yields the third inequality, while the last follows from \Cref{assumption:initial_datum}.

\medskip

Using the comparison principle and that $\underline \mu_\delta$ is a sub-solution of~\eqref{eq:geometric}, we get that $\underline\mu_\delta \leq G^\e$ in $\R^n \times \R_+$.  After letting $\delta \to 0$, we find
\begin{equation}\label{eq:c111}
	\underline \mu \leq G^\e,
\end{equation}
and, hence,
\begin{equation}\label{eq:sub_level1}
	\{ G^\e \leq 0\}
		\subset \{\underline \mu \leq 0\}
		= \{G_\ptw^\e \leq 0\}. 
\end{equation}

%
%
%

\medskip

A similar argument shows that $\overline \mu := \overline\phi \circ G_\ptw^\e \geq G^\e$, where
\[
	\overline \phi(y) :=
		\begin{cases}
		\overline G(2y), \quad &\text{ if } ~~ y \geq 0,\\
		\overline G(y/2), &\text{ if } ~~ y < 0,
		\end{cases}
\]
and, hence,
\begin{equation}\label{eq:sub_level2}
	\{ G^\e \leq 0\}
		\supset \{\overline \mu \leq 0\}
		= \{G_\ptw^\e\leq 0\}.
\end{equation}
Combining~\eqref{eq:sub_level1} and~\eqref{eq:sub_level2} yields $\{G^\e \leq 0 \} = \{G^\e_\ptw \leq 0\}$.

\medskip

Moreoever, since $\underline \mu \leq G^\e \leq \overline \mu$ and, for all $t\in \R_+$, $\Gamma_t(\underline\mu) = \Gamma_t(\overline \mu) = \Gamma_t(G^\e_\ptw)$, we find $\Gamma_t(G^\e) = \Gamma_t(G^\e_\ptw)$.
\end{proof}

\section{Front asymptotics for the initial value problem of the eikonal equation}\label{sec:FKPP_ivp}


We now obtain estimates on the front location in the general case.  We do so through a simple comparison principle-based argument.

\begin{proof}[Proof of \Cref{prop:ivp}]
	The first inclusion follows from comparison and \Cref{prop:geometric_td}.  Indeed, let $G^\e$ be the solution of~\eqref{eq:geometric} with initial datum $v_0$.   \Cref{prop:geometric_td} gives that $\{ G^\e \leq 0\} = \{ G^\e_\ptw \leq 0\}$.
	
	\medskip
	
	We claim that $G^\e$ is a super-solution of~\eqref{eq:FKPP}.  Fix any test function $\psi$ and suppose that $G^\e - \psi$ has a minimum at $(x,y,t) \in \R^n \times \R_+$.  Then~\eqref{eq:geometric} yields that, at $(x,y,t)$,
	\[
		\psi_t + \e u \cdot \nabla \psi + |\nabla \psi| \geq 0.
	\]
	Using the Cauchy-Schwarz inequality and Young's inequality, at $(x,y,t)$,
	\[
		\psi_t + \e u\cdot \nabla \psi + \frac{1}{2} |\nabla \psi|^2 + \frac{1}{2} \geq 0,
	\]
	and, thus $G^\e$ is a super-solution of~\eqref{eq:FKPP}.
	
	\medskip
	
	Applying the comparison principle, we get that $v^\e \leq G^\e$.  This, in turn, implies that $\{G^\e \leq 0\} \subset \{v^\e \leq 0\}$.  Using the equality above, we obtain $\{G^\e_\ptw \leq 0\} \subset \{v^\e \leq 0\}$.
	
	\medskip
	
	The second inclusion in \Cref{prop:ivp} is a simple case of the maximum principle.  Indeed, $v_0 \geq v^\e_\ptw(\cdot,0)$ in $\R^n$ and $v^\e$ and $v^\e_\ptw$ both satisfy the same equation on $\R^n\times\R_+$.  Hence, $v^\e_\ptw \leq v^\e$ in $\R^n\times\R_+$, from which it follows that $\{v^\e \leq 0\} \subset \{v^\e_\ptw \leq 0\}$, and the proof is complete.
	
%
%
%
%
\end{proof}

\section{Well-posedness and a priori bounds of \eqref{eq:general_metric_problem}} \label{sec:rho}

There are two steps in the proof of \Cref{lem:rho}.  The first is about the existence and uniqueness and some weak bounds on $\rho^\e$.  In the second, which deals with the main difficulty, we bootstrap these weak bounds into sharper, more useful ones.

\medskip

Since $\e$ plays a somewhat reduced role here, for simplicity, we suppress it and write $\rho$ in place of $\rho^\e$.  In addition, since we do not work with time dependence throughout this section we drop the $\aut$ notation and refer to $u_\aut$ as $u$.

\begin{lemma}\label{lem:wp_weak_bounds}
	Suppose \Cref{assumption:advection} holds.  Then there exists a unique globally Lipschitz solution $\rho$ of~\eqref{eq:general_metric_problem} such that, uniformly for all $x \in \R^{n-1}$, $\liminf_{y\to\infty} \rho(x,y) \geq 0$ and $\limsup_{y\to-\infty} \rho(x,y) \leq 0$.  Moreover, there exists $C_L$, depending only on $u$, such that, for all $(x,y)\in \R^n$,
	\begin{equation}\label{eq:weak_bounds_rho}
		|\rho(x,y) - y|
			\leq 3\e \|u\|_\infty |y|
			\qquad \text{ and } \qquad
			\Lip(\rho) \leq C_L.
	\end{equation}
\end{lemma}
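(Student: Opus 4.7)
The strategy has three stages: establish existence and uniqueness following the standard theory for metric-type problems in stochastic homogenization (cf.~\cite{ArmstrongCardaliaguet, ArmstrongCardaliaguetSouganidis, ArmstrongSouganidis}), then prove the envelope bound $|\rho(x,y)-y|\leq 3\e\|u\|_\infty|y|$ via explicit linear barriers, and finally derive the uniform Lipschitz estimate. For existence, I would solve~\eqref{eq:general_metric_problem} on truncated domains $B_R \subset \R^n$ with Dirichlet data $\rho = y$ on $\partial B_R$ and $\rho = 0$ on $B_R \cap \{y=0\}$ via Perron's method (or the direct method of the calculus of variations when $\beta<\infty$), obtain $R$-uniform Lipschitz bounds from stage three below, and pass to the $R\to\infty$ limit using Arzel\`a-Ascoli and the stability of viscosity solutions. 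Uniqueness in the class of globally Lipschitz solutions satisfying the prescribed asymptotics at $\pm\infty$ follows from the comparison principle applied on each half-space $\{\pm y>0\}$ separately.

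For the envelope bound, the plan is to exhibit explicit linear barriers on each half-space. On $\{y>0\}$, define $\rho^\pm(x,y):=(1\pm 3\e\|u\|_\infty)\,y$. Since $\Delta \rho^\pm = 0$ and $\nabla \rho^\pm = (0,\,1\pm 3\e\|u\|_\infty)$, substitution into the PDE gives
\[
-\tfrac{r\e^\beta}{2}\Delta\rho^\pm + r\e u\cdot\nabla\rho^\pm + |\nabla\rho^\pm|^r = r\e (1\pm 3\e\|u\|_\infty)\,u_\parallel(x) w(y) + (1\pm 3\e\|u\|_\infty)^r.
\]
For the plus sign, the convexity inequality $(1+x)^r\geq 1+rx$ (for $r\geq 1$, $x\geq 0$) together with $|u_\parallel w|\leq \|u\|_\infty$ yields a lower bound of $1+3r\e\|u\|_\infty - r\e(1+3\e\|u\|_\infty)\|u\|_\infty\geq 1$ under the standing assumption $\e\|u\|_{C^1}\leq 1/100$, so $\rho^+$ is a super-solution. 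For the minus sign, the Taylor bound $(1-x)^r\leq 1-rx+C_r x^2$ uniform in $r\in[1,2]$ yields an upper bound of at most $1$, so $\rho^-$ is a sub-solution. Both barriers vanish at $y=0$, matching the Dirichlet data; the comparison principle on the half-space (justified since $\rho$ and the barriers grow at most linearly at infinity) then yields $\rho^-\leq\rho\leq\rho^+$ for $y\geq 0$. A symmetric construction on $\{y<0\}$ establishes $|\rho(x,y)-y|\leq 3\e\|u\|_\infty|y|$ globally.

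For the Lipschitz estimate, in the first-order setting ($\beta=\infty$) the PDE itself gives the bound pointwise: $|\nabla\rho|^r\leq 1 + r\e\|u\|_\infty|\nabla\rho|$ forces $|\nabla\rho|\leq C_L$ for some $C_L$ depending only on $\|u\|_\infty$. When $\beta<\infty$, a standard Bernstein argument applied to $v=|\nabla\rho|^2$---using the coercivity of $|\nabla\rho|^r$ against the linear advection term, together with the smallness of $\e\|u\|_{C^1}$---yields the same conclusion at a suitably regularized interior maximum. Lipschitz continuity in the $x$-variable then follows by comparing $\rho(x+h,y)$ with $\rho(x,y)\pm C|h|$: since $u\in C^2$, the translate satisfies the same equation up to a source of size $O(\e\|u\|_{C^1}|h|)$, which the comparison absorbs.

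The main obstacle is the calibration of the barriers and the handling of their interface at $y=0$. The constant $3$ in $3\e\|u\|_\infty$ is chosen just large enough to absorb both the linear advection contribution $r\e\|u\|_\infty$ and the next-order Taylor remainder from $(1\pm x)^r$, and any smaller constant would fail one of the two sign tests. Equally crucial is the decision to treat $\{y>0\}$ and $\{y<0\}$ separately: the natural globally defined barrier $y+3\e\|u\|_\infty|y|$ would carry a kink at $y=0$ whose distributional second derivative is incompatible with the viscous term when $\beta<\infty$. Working half-space by half-space with both barriers meeting the data $\rho=0$ at $\{y=0\}$ avoids this singularity cleanly.
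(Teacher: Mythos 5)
Your barrier construction and the use of $(1\pm 3\e\|u\|_\infty)y$ on each half-space are essentially the same as the paper's Step 2, and your Lipschitz/existence plan (truncation, Perron, Bernstein, comparison of translates) is a reasonable alternative to the paper's citation of~\cite[Theorem~A.6]{ArmstrongCardaliaguet}. But there is a genuine gap: you never address the paper's Step~3, which is the only nontrivial part of this lemma.

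The equation in~\eqref{eq:general_metric_problem} is posed in all of $\R^n$, not just in $\R^{n-1}\times(\R_-\cup\R_+)$; the condition $\rho=0$ on $\{y=0\}$ is an extra interior constraint, not a classical Dirichlet boundary condition. Your plan solves the PDE separately on each half-space with $\rho=0$ as Dirichlet data on the interface and then glues. Nothing in what you write guarantees that the glued function is a viscosity solution \emph{at} $\{y=0\}$. In general it will not be: the two one-sided normal derivatives at the interface need not agree, and a concave kink there would violate the supersolution property (and, when $\beta<\infty$, also the subsolution property through the viscous term). The paper handles this precisely. For $\beta=\infty$ it builds local barriers of the form $y(1+\e u_\parallel w)^{-1}\pm y^2/(2\delta)$ on $\{0<y<\delta\}$ to identify the one-sided derivative $\rho_y(x,0^\pm)=(1+\e u_\parallel(x)w(0))^{-1}$ exactly, which shows $\rho$ is classically differentiable across $\{y=0\}$ and satisfies the equation there; for $\beta<\infty$ it invokes interior elliptic regularity to upgrade to $C^2$ across the interface. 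You should add this step; without it the lemma, as stated, is not proved.

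A secondary remark: your observation that the kink in the global barrier $y+3\e\|u\|_\infty|y|$ is ``incompatible with the viscous term'' is only half right. The upper barrier $y+3\e\|u\|_\infty|y|$ has a convex kink, so its distributional Laplacian has the favorable sign and it remains a valid supersolution even with $\beta<\infty$; the problem is only with the lower barrier $y-3\e\|u\|_\infty|y|$, whose concave kink works against the subsolution inequality. This does not change your conclusion that working half-space by half-space is cleaner, but the reasoning deserves a precise sign check. Also, the cleaner inequality for the subsolution is simply $a^r\le a$ for $0<a<1$ and $r\ge 1$, which avoids the quadratic Taylor remainder you introduce.
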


%
%

To use the half-relaxed limits, it is necessary to improve~\eqref{eq:weak_bounds_rho}.  This requires to introduce a correction in~\eqref{eq:weak_bounds_rho} that takes care of the oscillations, allowing to construct improved barriers. 

\begin{lemma}\label{lem:sharp_bounds}
	Let $\rho$ be the solution of~\eqref{eq:general_metric_problem} constructed in \Cref{lem:wp_weak_bounds}.  Then there exists positive constants $\mu_1$, $\mu_2$, and $\mu_3$, depending only on $\|u\|_{C^1}$, such that the solution $\rho$ of \eqref{eq:general_metric_problem} satisfies, for all $(x,y) \in \R^n$,
	\[
		|\rho(x,y) - y + \e^{2/3} u_{\parallel} W^\e(\e^{2/3}y)|
			\leq \e^{4/3} \mu_1 |y| + \frac{\mu_2 \e^2 y^2}{2}
				+ \e^{2/3} \mu_3 \bigg|\int_0^{y \e^{2/3}} |W^\e(y')|^2 dy'\bigg|.
	\]
\end{lemma}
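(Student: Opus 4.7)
The plan is to prove the sharp bound by constructing super- and sub-solution barriers and applying the comparison principle. Specifically, define
\[
\rho^\pm(x,y) := y - \e^{2/3} u_\parallel(x) W^\e(\e^{2/3}y) \pm E(x,y),
\]
where
\[
E(x,y) := \mu_1 \e^{4/3} |y| + \tfrac{\mu_2}{2} \e^2 y^2 + \mu_3 \e^{2/3} \bigg|\int_0^{y\e^{2/3}} |W^\e(y')|^2 \, dy'\bigg|
\]
for positive constants $\mu_1,\mu_2,\mu_3$ to be chosen. The ansatz $\rho \approx y - \e^{2/3} u_\parallel W^\e(\e^{2/3} y)$ is the natural one, as is already suggested by the informal computation following \Cref{thm:geometric_ti}: substituting it into~\eqref{eq:general_metric_problem} cancels all $O(\e)$ terms, leaving a residual of order $\e^{4/3}$ and higher that may grow in $y$ through $W^\e$, and the three summands in $E$ are tailored to absorb the three types of such residuals.

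The argument proceeds on $\R^{n-1}\times\R_+$, with $\R^{n-1}\times\R_-$ handled symmetrically. At $y=0$ each summand in $E$ vanishes, so $\rho^\pm(x,0)=0=\rho(x,0)$. For $y>0$, where $\rho^\pm$ is smooth, I would compute $\partial_y\rho^\pm$, $\nabla_x\rho^\pm$, and $\Delta\rho^\pm$ directly, then Taylor-expand $|\nabla\rho^\pm|^r$ around $\partial_y\rho^\pm$---valid since $\partial_y\rho^\pm\approx 1$ is bounded away from zero and $|\nabla_x\rho^\pm|=O(\e^{2/3}|W^\e|)$ is small, using the deterministic pointwise bound $|W^\e(\e^{2/3}y)|\leq M\e^{1/3}|y|$ that follows from $\|w\|_\infty\leq M$---and insert everything into the operator $-\tfrac{r\e^\beta}{2}\Delta+r\e u\cdot\nabla+|\nabla\cdot|^r$.

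The leading-order terms conspire to cancel, leaving a residual of three types: (a) a term of order $\e^{4/3}|W^\e|^2|\nabla_x u_\parallel|^2$ from the cross term in the Taylor expansion of $|\nabla\rho|^r$; (b) mixed terms of order $\e^{5/3}|W^\e|$ coming from $u_\perp\cdot\nabla_x\rho$, which by the deterministic bound on $W^\e$ are of order $\e^2|y|$; and (c) terms of order $\e^{\beta+1}+\e^{\beta+2/3}|W^\e|$ from the Laplacian applied to $u_\parallel W^\e(\e^{2/3}\cdot)$. On the other side, the $y$-derivative of $\pm E$ contributes $\pm r[\mu_1\e^{4/3}+\mu_2\e^2 y+\mu_3\e^{4/3}|W^\e|^2]$ (up to $\text{sgn}(y)$), so choosing $\mu_3$ large relative to $\|\nabla_x u_\parallel\|_\infty^2$ handles (a); then $\mu_2$ large handles (b); and finally $\mu_1$ large handles (c) and any remaining constant-order errors. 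The choices depend only on $\|u\|_{C^1}$ and $M$, as required.

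Since $\rho^\pm$ have at most quadratic growth in $y$ and $\rho$ is globally Lipschitz by \Cref{lem:wp_weak_bounds}, the standard comparison principle for~\eqref{eq:general_metric_problem} on $\R^{n-1}\times\R_+$ yields $\rho^-\leq\rho\leq\rho^+$, which is the claimed inequality on that half-space. The symmetric construction on $\R^{n-1}\times\R_-$ completes the proof. The principal technical obstacle is step (a): verifying that the unbounded-in-$y$ error $\e^{4/3}|W^\e|^2$---whose $y$-antiderivative produces exactly the third term in the claim---is precisely balanced by the $\mu_3$ correction. A secondary difficulty is making the Taylor expansion of $|\nabla\rho|^r$ uniform in $r\in[1,2]$, which reduces to bounding the remainder in $(a^2+b)^{r/2}\approx a^r+\tfrac{r}{2}a^{r-2}b$ when $a\approx 1$ and $b$ is small; this is routine but requires some care near the endpoints $r=1$ and $r=2$.
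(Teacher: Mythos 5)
Your barriers $\rho^\pm$ coincide (up to sign conventions on $\R_+$) with the paper's $\underline\rho,\overline\rho$, and the plan — Taylor-expand $|\nabla\rho^\pm|^r$ around $\partial_y\rho^\pm\approx 1$, absorb the residuals with the three terms of $E$, then apply comparison — is the same as the paper's. However, there is a genuine gap in the step where you write ``the standard comparison principle for~\eqref{eq:general_metric_problem} on $\R^{n-1}\times\R_+$ yields $\rho^-\le\rho\le\rho^+$.'' The barriers are \emph{not} sub- and super-solutions on all of $\R^{n-1}\times\R_+$. When you expand $|\nabla\rho^\pm|^r$, the quadratic remainder contributes terms of the order $\mu_2^2\e^4 y^2 + \mu_3^2\e^{8/3}|W^\e(\e^{2/3}y)|^4$ (and similarly from $r\e u\cdot\nabla\rho^\pm$), which must be dominated by the linear absorbing terms $\mu_2\e^2 y + \mu_3\e^{4/3}|W^\e(\e^{2/3}y)|^2$. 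Using $|W^\e(\e^{2/3}y)|\le M\e^{1/3}y$, the ratio of the quartic to the quadratic $W^\e$-term is $\mu_3\e^{4/3}|W^\e|^2\lesssim \mu_3 M^2\e^2 y^2$, which is $O(1)$ already when $y\sim 1/\e$. So for $y\gtrsim 1/\e$ the expansion parameter $b$ in $(a^2+b)^{r/2}$ is \emph{not} small — contrary to what you assert — and the sub/super-solution inequalities fail, no matter how $\mu_1,\mu_2,\mu_3$ are chosen.

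The paper's proof addresses this explicitly and you need to do the same. The barriers are shown to be sub/super-solutions only on $V_\e=\{(x,y):0<y<(16\mu_3 M^4\e^2)^{-1/2}\}$. One then checks separately that $\underline\rho\le\rho\le\overline\rho$ holds on $\partial V_\e$ — at $y=0$ this is the boundary condition, and at $y=(16\mu_3 M^4\e^2)^{-1/2}$ it follows from the crude linear bound $|\rho(x,y)-y|\le 3\e\|u\|_\infty|y|$ of \Cref{lem:wp_weak_bounds}, provided $\mu_2$ is chosen large enough relative to $M,\sqrt{\mu_3},\|u\|_\infty$ so that the quadratic term $\tfrac{\mu_2}{2}\e^2 y^2$ dominates the error at that height. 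Comparison then applies on the bounded strip $V_\e$. Finally, for $y$ beyond the threshold, a separate (easier) argument is needed: there the weak bound of \Cref{lem:wp_weak_bounds} already implies the claim, again because the quadratic term in the claimed estimate dominates $\e|y|$. Without this decomposition into the near region (where comparison applies) and the far region (where the weak bound suffices), the argument is incomplete.
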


It is clear that \Cref{lem:rho} follows directly from \Cref{lem:wp_weak_bounds,lem:sharp_bounds}.  As such, we now aim to prove these two results in turn.

\subsection{Well-posedness and weak bounds}

\begin{proof}[Proof of \Cref{lem:wp_weak_bounds}]
	We proceed in three steps.  Firstly, we establish the existence and uniqueness of solutions of
	\begin{equation}\label{eq:general_metric_problem6}
	\begin{cases}
		- r \frac{\e^\beta}{2} \Delta \rho + r \e u \cdot D \rho + |D \rho|^r = 1
			\qquad &\text{ in }~~ \R^{n-1}\times (\R_- \cup \R_+),\\
		\rho = 0 &\text{ on }~~ \R^{n-1}\times\{0\}.
	\end{cases}
	\end{equation}
	Secondly, we obtain weak bounds on solutions $\rho$ of~\eqref{eq:general_metric_problem6}.  Finally, we use these weak bounds to show that solutions of~\eqref{eq:general_metric_problem6} are solutions of~\eqref{eq:general_metric_problem}; that is, they are solutions on $\R^n$ instead of merely on $\R^{n-1}\times(\R_-\cup\R_+)$.
	
	\bigskip
	
	{\em Step 1:} The existence, uniqueness, and the bound on the Lipschitz constant $C_L$ on $\R^{n-1}\times\R_+$ follows immediately from~\cite[Theorem~A.6]{ArmstrongCardaliaguet}.  A symmetric argument applies on $\R^{n-1}\times \R_-$.
	
	\bigskip
	
	{\em Step 2:} To obtain~\eqref{eq:weak_bounds_rho}, let, for $(x,y) \in \R^{n-1}\times \R_+$, $\underline \rho(x,y) = (1 - 3\e \|u\|_\infty) y$.  It is immediate that
\[
	 1
	 	\geq  r \e \|u\|_\infty (1- 3\e \|u\|_\infty) + (1- 3\e \|u\|_\infty)
	 	\geq - r \frac{\e^\beta}{2} \Delta \underline \rho + r \e u \cdot \nabla \underline \rho + |\nabla \underline\rho|^r.
\]
%

Observe that $\underline \rho \leq \rho$ on $\R^{n-1}\times\{0\}$.  
The comparison principle (see \cite[Proposition~A.4]{ArmstrongCardaliaguet}) yields $\underline \rho \leq \rho$.  

\medskip

We may similarly build a super-solution of~\eqref{eq:general_metric_problem6} on $\R^{n-1}\times \R_+$  
%
%
and conclude that, in $\R^{n-1}\times\R_+$,
\begin{equation}\label{eq:weak_asymptotics}
	(1-3 \e \|u\|_\infty) y
		\leq \rho^\delta
		\leq (1 + 3 \e \|u\|_\infty) y.
\end{equation}

	\bigskip

	{\em Step 3:} We now show that $\rho$ satisfies the planar metric problem~\eqref{eq:general_metric_problem} on $\R^{n-1}\times \{0\}$.  To accomplish this, we look separately at the cases $\beta = \infty$ and $\beta < \infty$.  For simplicity, we show the argument only for $r=1$.  The modifications for the general case are conceptually straightforward but significantly messier.

\medskip

When $\beta = \infty$, we show that, in the classical sense, $\nabla_x \rho(x,0) = 0$ and $\rho_y(x,0) = (1+ \e u_\parallel(x,0))^{-1}$ for all $x\in \R^{n-1}$.  From these two equalities, it is clear that $\rho$ satisfies~\eqref{eq:general_metric_problem} classically on $\R^{n-1}\times\{0\}$.

\medskip

That $\nabla_x \rho \equiv 0$ is obvious since $\rho \equiv 0$ on $\R^{n-1}\times \{0\}$.  We thus focus on proving that $\rho_y(x,0) = (1+ \e u_{\parallel})^{-1}$ for $x\in \R^{n-1}$ by constructing barriers.

\medskip

We begin with a lower bound in $\rho$ for $0 < y \ll 1$. Fix $\delta \in (0, 1/100)$ and let
\[
	\underline \rho = y (1+ u_\parallel w)^{-1} - y^2/(2\delta).
\]
We show that $\underline \rho \leq \rho$ on the domain $V_\delta = \{(x,y) \in \R^{n-1}\times\R_+: y < \delta\}$ by showing that $\underline \rho$ is a sub-solution of~\eqref{eq:general_metric_problem6} on $V_\delta$ and that $\underline \rho \leq \rho$ on $\partial V_\delta$.

\medskip

A direct computation yields
\[\begin{split}
	\e &u \cdot \nabla \underline \rho + |\nabla \underline\rho|
		= - \frac{\e^2 y u_{ \perp} \cdot \nabla_x u_{\parallel} w}{(1+\e u_{\parallel} w)^2} + \e u_{\parallel} w \left(\frac{1}{1 + \e u_{\parallel} w} - \frac{\e y u_{\parallel} w_y}{(1 + \e u_{\parallel} w)^2} - \frac{y}{\delta}\right)\\
		&\quad \left|\frac{\e y \nabla_x u_{\parallel} w}{(1+\e u_{\parallel} w)^2}, \frac{1}{1 + \e u_{\parallel} w} - \frac{\e y u_{\parallel} w_y}{(1 + \e u_{\parallel} w)^2} - \frac{y}{\delta}\right|.
\end{split}\]
Recall that $\e \|u\|_{C^1}, \delta \leq 1/100$ and $0 < y < \delta$.  It then follows from the triangle inequality that
\[\begin{split}
	\e u \cdot \nabla \underline \rho + |\nabla \rho|
		&\leq - \frac{\e^2 y u_{ \perp} \cdot \nabla_x u_{\parallel} w}{(1+\e u_{\parallel} w)^2}
			+ \e u_{\parallel} w \left(\frac{1}{1 + \e u_{\parallel} w}
			- \frac{\e y u_{\parallel} w_y}{(1 + \e u_{\parallel} w)^2} - \frac{y}{\delta}\right)\\
		&\quad + \frac{\e y |\nabla_x u_{\parallel} w|}{(1+\e u_{\parallel} w)^2} + \frac{1}{1 + \e u_{\parallel} w} + \frac{\e y |u_{\parallel} w_y|}{(1 + \e u_{\parallel} w)^2} - \frac{y}{\delta}.
\end{split}\]
Estimating each term in turn and using that $\delta < 1$, we find
\[\begin{split}
	\e u \cdot \nabla \underline \rho + |\nabla \rho|
		&\leq \frac{y}{99^2}
			+ \frac{\e u_{\parallel} w}{1 + \e u_{\parallel} w}
			+ \frac{y}{99^2}
			+ \frac{y}{100 \delta}
			+ \frac{100 y}{99^2}
			+ \frac{1}{1 + \e u_{\parallel} w}
			+ \frac{100 y}{99^2}
			- \frac{y}{\delta}\\
		&\leq 1 + \frac{y}{50} - \frac{y}{50\delta}
		< 0,
\end{split}\]
that is, $\underline \rho$ is a sub-solution of~\eqref{eq:general_metric_problem6} on $V_\delta$.

\medskip

We now show that $\underline \rho \leq \rho$ on $\partial V_\delta$.  Since this is clearly true when $y= 0$, we need only consider the case $y = \delta$.
%
%
%
%
%
For all $x\in \R^{n-1}$, we have
\[
	\underline \rho(x,\delta)
		\leq \frac{\delta}{(1 - 1/100)} - \frac{\delta}{2}
		< \frac{9 \delta}{10},
\]
and, from~\eqref{eq:bounds_rho1},
\[
	\rho(x,\delta)
		\geq \frac{9 \delta}{10}.
\]
It follows that $\underline \rho \leq \rho$ on $\partial V_\delta$.  From the comparison principle, we conclude that $\underline \rho \leq \rho$ in $V_\delta$.

\medskip

A similar argument can be used to conclude that, for $\delta$ sufficiently small, $\rho \leq \overline \rho$ where $\overline \rho(y) := y(1+ \e u_{\parallel}w)^{-1} + y^2/(2\delta)$.

\medskip

We conclude that
\[
	\lim_{y\searrow 0} \frac{\rho(x,y)}{y} = \frac{1}{1 + \e u_{\parallel}w},
\]
and remark that the case when $y \nearrow 0$ follows similarly.    Thus, for all $x\in \R^{n-1}$, $\rho_y(x,0) = (1 + \e u_{\parallel}(x) w(0))^{-1}$, and the proof is complete when $\beta = \infty$.

\medskip

When $\beta < \infty$, the problem is elliptic and the classic theory implies that $\rho \in C^2(\R^n)$ and, hence, that it satisfies~\eqref{eq:general_metric_problem}.  This concludes the proof.

\end{proof}

\subsection{Sharper a priori estimates}

We now show how to bootstrap the weak bounds obtained above to the sharp bounds on $\rho$ necessary to control the corrector $\chi^\e_\aut$ defined in~\eqref{eq:chi_definition}.

\begin{proof}[Proof of \Cref{lem:sharp_bounds}]
Firstly we notice that we need only obtain bounds for all $\epsilon \in (0,\epsilon_0)$ for some threshold $\epsilon_0 >0$, to be determined.  For $\epsilon \geq \epsilon_0$ this is trivially true by \Cref{lem:wp_weak_bounds} after taking $\mu_1$, $\mu_2$, and $\mu_3$ sufficiently large.  Secondly, we work only on $\R^{n-1}\times \R_+$, since the case $y< 0$ can be handled similarly.

%

\bigskip

{\em Step 1:}  
To obtain a lower bound, we build a sub-solution.  Fix positive constants $\mu_1$, $\mu_2$, and $\mu_3$ to be determined, and let
\[
	\underline \rho(x,y)
		:= y(1 - \e^{4/3}\mu_1) - \frac{1}{2} \mu_2 \e^2 y^2 - \mu_3 \e^{2/3}\int_0^{y\e^{2/3}} |W^\e(y')|^2 dy' - \e^{2/3} u_{\parallel}(x) W^\e(\e^{2/3}y).
\]
Direct computations yield
\[\begin{split}
	- r&\frac{\e^\beta}{2} \Delta \underline \rho
		+ r\e u \cdot \nabla \underline \rho
		+ |\nabla \underline\rho|^r\\
	&= r\frac{\e^\beta}{2} \left(\mu_2 \e^2 + 2\mu_3 \e^{5/3} W^\e(\e^{2/3} y) w(y) + \e u_{\parallel} w_y + \e^{2/3}\Delta_x u_{\parallel}(x) W^\e(\e^{2/3}y) \right)\\
	&\qquad - r \e^{5/3} W^\e(\e^{2/3}y) u_{\perp} \cdot \nabla_x u_{\parallel}
		+ r \e u_{\parallel} w \left(1 - \mu_1 \e^{4/3} - \mu_2 \e^2 y - \mu_3 \e^{4/3} |W^\e(\e^{2/3}y)|^2 - \e u_{\parallel} w\right)\\
	&\qquad +\Big[\e^{4/3} |\nabla_x u_{\parallel}|^2|W^\e(\e^{2/3}y)|^2 + 1 - 2 \left(\mu_1 \e^{4/3} + \mu_2 \e^2 y + \mu_3 \e^{4/3} |W^\e(\e^{2/3}y)|^2 + \e u_{\parallel} w\right)\\
	&\qquad	+ \left(\mu_1 \e^{4/3} + \mu_2 \e^2 y + \mu_3 \e^{4/3} |W^\e(\e^{2/3}y)|^2 + \e u_{\parallel} w\right)^2 \Big]^{r/2}.
\end{split}\]
After using the inequality $(1+x)^{r/2} \leq 1 + rx/2$ and cancelling two terms of the form $\e u_{\parallel} w$, which is the purpose for the last term in $\underline \rho$, we find
\[\begin{split}
	- r&\frac{\e^\beta}{2} \Delta \underline \rho
		+ r\e u \cdot \nabla \underline \rho
		+ |\nabla \underline\rho|^r\\
	&\leq r\frac{\e^\beta}{2} \left(\mu_2 \e^2 + 2\mu_3 \e^{5/3} W^\e(\e^{2/3}y) w(y) + \e u_{\parallel} w_y + \e^{2/3}\Delta_x u_{\parallel}(x) W^\e(\e^{2/3}y) \right)\\
	&\qquad - r \e^{5/3} W^\e(\e^{2/3}y) u_{\perp} \cdot \nabla_x u_{\parallel}\\
	&\qquad	- r \e u_{\parallel} w \left( \mu_1 \e^{4/3} + \mu_2 \e^2 y + \mu_3 \e^{4/3} |W^\e(\e^{2/3}y)|^2 + \e u_{\parallel} w\right)
		+ 1\\
	&\qquad + \frac{r}{2}\e^{4/3} |\nabla_x u_{\parallel}|^2 |W^\e(\e^{2/3}y)|^2
		 - r \left(\mu_1 \e^{4/3} + \mu_2 \e^2 y + \mu_3 \e^{4/3} |W^\e(\e^{2/3}y)|^2\right)\\
	&\qquad	+ \frac{r}{2}\left(\mu_1 \e^{4/3} + \mu_2 \e^2 y + \mu_3 \e^{4/3} |W^\e(\e^{2/3}y)|^2 + \e u_{\parallel} w\right)^2.
\end{split}\]
Next, we rearrange terms and we use that $(a_1 + \cdots + a_k)^2 \leq k (a_1^2 + \cdots + a_k^2)$ and $r\leq 2$ to obtain, for some $C\geq 1$ depending only on $\|u\|_{C^2}$ and $\|w\|_{C^1}$ and changing line-by-line,
\[\begin{split}
	- r&\frac{\e^\beta}{2} \Delta \underline \rho
		+ r\e u \cdot \nabla \underline \rho
		+ |\nabla \underline\rho|^r\\
	&\leq 1
			- \e^{4/3}\Big[r\mu_1 
				- r  \e^{\beta + 2/3} \mu_2
				- r\e^{1/3} u_{\parallel} w_y
				 + r \e  \mu_1 u_{\parallel} w
				 + r\e^{2/3}(u_{\parallel} w)^2\\
&\qquad		 - 2r \mu_1^2 \e^{4/3}
				 - 2r \e^{2/3} (u_{\parallel} w)^2\Big]
			- \e^2 y\Big[r\mu_2
				+ r\e^\beta \mu_3 \frac{W^\e(\e^{2/3}y)}{y \e^{1/3}} w\\
&\qquad	 	+ r \frac{W^\e(\e^{2/3}y)}{y\e^{1/3}} u_{\perp} \cdot \nabla u_{\parallel} + r \mu_2 \e u_{\parallel} w \Big]
			- \e^{4/3} |W^\e(\e^{2/3}y)|^2 \Big[ r\mu_3 + r  \mu_3 \e u_{\parallel} w - \frac{r}{2} |\nabla_x u_{\parallel}|^2\Big]\\
&\qquad	+ \frac{r}{2}  \e^{\beta + 2/3} \Delta_x u_{\parallel} W^\e(\e^{2/3}y)
			+ 2r \mu_2^2 \e^4 y^2
			+ 2r \mu_3^2 \e^{8/3} |W^\e(\e^{2/3}y)|^4\\
		&\leq 1
			- \e^{4/3}\Big[r\mu_1 
				- C\left( \e^{4/3} \mu_2 + \e \mu_1 + \e^{4/3} \mu_1^2  + \e^{1/3}\right)\Big]
			- \e^2 y\Big[r\mu_2 - C\Big(\frac{|W^\e(\e^{2/3}y)|}{y\e^{1/3}} ( \e^{2/3}\mu_3 + 1) + \e \mu_2 \Big) \Big]\\
&\qquad	- \e^{4/3} |W^\e(\e^{2/3}y)|^2 \Big[ r\mu_3 - C(\mu_3 \e +1)\Big]
		+ C\e^{4/3} |W^\e(\e^{2/3}y)|
		+ 4 \mu_2^2 \e^4 y^2
		+ 4 \mu_3^2 \e^{8/3} |W^\e(\e^{2/3}y)|^4.
\end{split}\]
Young's inequality and that $|W^\e(\e^{2/3}y)| \leq C \e^{1/3} y$ yields
\[\begin{split}
	- r&\frac{\e^\beta}{2} \Delta \underline \rho
		+ r\e u \cdot \nabla \underline \rho
		+ |\nabla \underline\rho|^r\\
		&\leq 1
			- \e^{4/3}\Big[r\mu_1 
				- C\left( \e^{4/3} \mu_2 + \mu_1 \e^{2/3} + \mu_1^2 \e^{4/3} +  \e^{1/3}\right)\Big]
			- \e^2 y\Big[r\mu_2 - C\Big( \e^{2/3} \mu_3 + 1 + \e \mu_2 \Big) \Big]\\
&\qquad	- \e^{4/3} |W^\e(\e^{2/3}y)|^2 \Big[ r\mu_3 - C(\mu_3 \e + 1) \Big]
		+ C\e^{4/3} (1 + |W^\e(\e^{2/3}y)|^2)
	+ 4 \e^4 \mu_2^2 y^2
		+ 4 \mu_3^2 \e^{8/3} |W^\e(\e^{2/3}y)|^4.
\end{split}\]
Rearranging terms and, if necessary, lowering $\epsilon_0$ so that $C \epsilon_0^{2/3} < 1/2$, we find
\begin{equation}\label{eq:c22}
\begin{split}
	- r&\frac{\e^\beta}{2} \Delta \underline \rho
		+ r\e u \cdot \nabla \underline \rho
		+ |\nabla \underline\rho|^r\\
		&\leq 1
			- \e^{4/3}\left[\frac{\mu_1}{2} 
				- C(\e^{4/3} \mu_2 + \e \mu_1^2 + 1)\right]
			- \e^2 y\left[\frac{1}{2}\mu_2 - C(1 + \e^{2/3} \mu_3) \right]\\
&\qquad		- \e^{4/3} |W^\e(\e^{2/3}y)|^2 \left[\frac{\mu_3}{2} - C\right]
			+ 4 \e^4 \mu_2^2 y^2
			+ 4 \mu_3^2 \e^{8/3} |W^\e(\e^{2/3}y)|^4.
\end{split}
\end{equation}

Recall, from the definition of mild white noise, that $\|w\|_{C^1} \leq M$, and let
\begin{equation}\label{eq:mu_23}
	\mu_3 := 4C + 1
		\qquad \text{ and } \qquad
	\mu_2 := 4C + 1 + 8 M^2 \sqrt{\mu_3} (1 + \|u\|_\infty).
\end{equation}
Let $\overline \e_0>0$ be such that
\[
	\mu_2 \geq 4 C (1 + \overline\e_0^{2/3} \mu_3),
\]
and set $\mu_1 = 4 C(\overline \e_0^{4/3} \mu_2 + 1)$.

\medskip

Lowering $\e_0$, if necessary, so that $\e_0 \leq \overline \e_0$, we find
\[\begin{split}
	- r&\frac{\e^\beta}{2} \Delta \underline \rho
		+ r\e u \cdot \nabla \underline \rho
		+ |\nabla \underline\rho|^r\\
		&\leq 1
			- \e^{4/3}\left[\frac{\mu_1}{4} 
				- C\e \mu_1^2\right]
			- \e^2 \frac{\mu_2}{4} y
			- \e^{4/3} \frac{\mu_3}{4} |W^\e(\e^{2/3}y)|^2
			+ 4 \e^4 \mu_1^2 y^2
			+ 4 \mu_3^2 \e^{8/3} |W^\e(\e^{2/3}y)|^4.
\end{split}\]
Again, making $\e_0$ even smaller, if necessary, we obtain $8 C \e_0 \mu_1 \leq 1$ and, hence,
\begin{equation}\label{eq:almost_subsolution}
\begin{split}
	- r&\frac{\e^\beta}{2} \Delta \underline \rho
		+ r\e u \cdot \nabla \underline \rho
		+ |\nabla \underline\rho|^r\\
		&\leq 1
			- \e^{4/3} \frac{\mu_1}{8}
			- \e^2 \frac{\mu_2}{4} y
			- \e^{4/3} \frac{\mu_3}{4} |W^\e(\e^{2/3}y)|^2
			+ 4 \e^4 \mu_1^2 y^2
			+ 4 \mu_3^2 \e^{8/3} |W^\e(\e^{2/3}y)|^4.
\end{split}
\end{equation}

\medskip

We show next that $\underline \rho$ is a sub-solution of~\eqref{eq:general_metric_problem} in the domain $V_\e = \{(x,y) \in \R^{n-1}\times\R_+ : y < (16 \mu_3 M^4 \e^2)^{-1/2}\}$.  
Consider the third and fifth terms in the right hand side of~\eqref{eq:almost_subsolution}.  Making $\e_0$ smaller and using the definition of $V_\e$, we find
\begin{equation}\label{eq:c20}
	4 \e^4 \mu_2^2 y^2 - \frac{\e^2 \mu_2 y}{4}
		= \frac{\e^2 y \mu_2}{4}( 16 \e^2 \mu_2 y - 1)
		< \frac{\e^2 y \mu_2}{4}
			\left( \frac{4 \e \mu_2}{M^2\sqrt{\mu_3}} - 1\right)
		< 0
		\quad \text{ in }~~ V_\e.
\end{equation}
Next,  consider the fourth and six terms in the right hand side of~\eqref{eq:almost_subsolution}.  Since $|W^\e(\e^{2/3}y)|^2 \leq \e^{2/3} M^2 y^2$ and $\mu_2$, $\mu_3$, $M\geq 1$,
\begin{equation}\label{eq:c21}
\begin{split}
	&4  \e^{8/3} \mu_3^2 |W^\e(\e^{2/3}y)|^4
		- \frac{\e^{4/3} \mu_3}{4} |W^\e(\e^{2/3}y)|^2\\
		&= \frac{\e^{4/3} \mu_3}{4} |W^\e(\e^{2/3}y)|^2
			\left(16 \e^{4/3} \mu_3 |W^\e(\e^{2/3}y)|^2 - 1 \right)\\
		&\leq \frac{\e^{4/3} \mu_3}{4} |W^\e(\e^{2/3}y)|^2
			\left(16 \e^2 \mu_3 M^2 y^2 - 1 \right)
		< \frac{\e^{4/3} \mu_3}{4} |W^\e(\e^{2/3}y)|^2
			\left(\frac{1}{M^2} - 1 \right)
		\leq 0
		\quad \text{ in } V_\e.
\end{split}
\end{equation}
The combination of~\eqref{eq:almost_subsolution}, \eqref{eq:c20}, and~\eqref{eq:c21} imply that $\underline\rho$ is a sub-solution of~\eqref{eq:general_metric_problem} on $V_\e$.

\medskip

Next, we claim that $\underline\rho \leq \rho$ on $\partial V_\e$. Since clearly $\underline\rho \leq \rho$ on $\R^{n-1}\times \{0\}$, we concentrate on $\R^{n-1}\times \{(16 \mu_3 M^4 \e^2)^{-1/2}\}$.  Using the weak lower bound of \Cref{lem:wp_weak_bounds} and that $u_{\parallel} W^\e(\e^{2/3} y) \geq - \e^{1/3} \|u\|_\infty y$, we observe that
\[\begin{split}
	\rho(x,y) - \underline\rho(x,y)
		&\geq y(1 - \e C_L) - \underline\rho(x,y) \\
		&= \e^{4/3} \mu_1 y + \frac{1}{2} \mu_2 \e^2 y^2 + \mu_3 \e^{2/3} \int_0^{y\e^{2/3}} |W^\e(y')|^2 dy' + \e^{2/3} u_{\parallel} W^\e(\e^{2/3}y) - \e C_L y\\
		&\geq \e^{4/3} \mu_1 y + \frac{1}{2} \mu_2 \e^2 y^2 + \mu_3 \e^{2/3} \int_0^{y\e^{2/3}} |W^\e(y')|^2 dy' - \e (C_L + \|u\|_\infty ) y.
\end{split}\]
Thus, on $\R^{n-1}\times\{(16 \mu_3 M^4 \e^2)^{-1/2}\}$,
\[\begin{split}
	\rho(x,y) - \underline\rho(x,y)
		&\geq  \e^{4/3} \mu_1 y + \frac{\mu_2 \e^2}{2} \frac{y}{4 \sqrt \mu_3 M^2 \e} + \mu_3 \e^{2/3} \int_0^{y\e^{2/3}} |W^\e(y')|^2 dy' - \e (C_L + \|u\|_\infty)y.
\end{split}\]
The choice of $\mu_2$ and $\mu_3$ (see~\eqref{eq:mu_23}) gives that the sum of the second and fourth terms on the right hand side is positive, and, hence,
\[
	\rho(x,y) - \underline\rho(x,y)
		\geq  \e^{4/3} \mu_1 y + \mu_3 \e^{2/3} \int_0^{y\e^{2/3}} |W^\e(y')|^2 dy
		\geq 0.
\]
It then follows from the comparison principle that $\underline \rho \leq \rho$ on $V_\e$.

\medskip

A similar argument 
shows that $\underline \rho \leq \rho$ for $y > (16 \mu_3 M^4 \e^2)^{-1/2}$, so we omit the details.  We conclude that $\underline \rho \leq \rho$ in $\R^{n-1}\times\R_+$, finishing the proof of the lower bound.

\bigskip

{\em Step 2::}
We obtain an upper bound on $\rho$ by constructing a super-solution and arguing as above.  As such, we only include the first steps, which vary from those of the proof of the lower bound.  The rest of the proof proceeds exactly as above.

\medskip

Fix positive constants $\mu_1$, $\mu_2$, and $\mu_3$ to be determined and let
\[
	\overline \rho(x,y)
		:= y(1+\e^{4/3} \mu_1)
			+ \frac{1}{2} \mu_2 \e^2 y^2
			+ \e^{2/3} \mu_3\int_0^{y\e^{2/3}} |W^\e(y')|^2dy'
			- \e^{2/3} u_{\parallel} W^\e(\e^{2/3}y).
\]
A direct computation gives
\begin{equation}\label{eq:c101}
\begin{split}
	- r&\frac{\e^\beta}{2} \Delta \underline \rho
		+ r\e u \cdot \nabla \underline \rho
		+ |\nabla \underline\rho|^r\\
	&= - r\frac{\e^\beta}{2}  \left(\mu_2 \e^2 + 2\mu_3 \e^{5/3} W^\e(\e^{2/3}y) w(y) - \e u_{\parallel} w_y - \e^{2/3}\Delta_x u_{\parallel}(x) W^\e(\e^{2/3}y) \right)\\
		&\qquad - r \e^{5/3} W^\e(\e^{2/3}y) u_{\perp} \cdot \nabla_x u_{\parallel}
			+ r \e u_{\parallel} w \left(1 + \mu_1 \e^{4/3} + \mu_2 \e^2 y + \mu_3 \e^{4/3} |W^\e(\e^{2/3}y)|^2 - \e u_{\parallel} w\right)\\
		&\qquad + \Big[\e^{4/3} |\nabla_x u_{\parallel}|^2 + 1 + 2 \left(\mu_1 \e^{4/3} + \mu_2 \e^2 y + \mu_3 \e^{4/3} |W^\e(\e^{2/3}y)|^2 - \e u_{\parallel} w\right)\\
		&\qquad	+ \left(\mu_1 \e^{4/3} + \mu_2 \e^2 y + \mu_3 \e^{4/3} |W^\e(\e^{2/3}y)|^2 - \e u_{\parallel} w\right)^2 \Big]^{r/2}\\
	&\geq - r\frac{\e^\beta}{2} \left(\mu_2 \e^2 + 2\mu_3 \e^{5/3} W^\e(\e^{2/3}y) w(y) - \e u_{\parallel} w_y - \e^{2/3}\Delta_x u_{\parallel}(x) W^\e(\e^{2/3}y) \right)\\
		&\qquad - r \e^{5/3} W^\e(\e^{2/3}y) u_{\perp} \cdot \nabla_x u_{\parallel}
		+ r \e u_{\parallel} w \left(1 + \mu_1 \e^{4/3} + \mu_2 \e^2 y + \mu_3 \e^{4/3} |W^\e(\e^{2/3}y)|^2 - \e u_{\parallel} w\right)\\
		&\qquad +\Big[\e^{4/3} |\nabla_x u_{\parallel}|^2 |W^\e(\e^{2/3}y)|^2 + 1 + 2 \left(\mu_1 \e^{4/3} + \mu_2 \e^2 y + \mu_3 \e^{4/3} |W^\e(\e^{2/3}y)|^2 - \e u_{\parallel} w\right) \Big]^{r/2}.
\end{split}
\end{equation}
In the proof of the lower bound, we used the concavity of $(1 + x)^{r/2}$; this will not work here.  Instead, we use Taylor's theorem, which implies that there exists $E_\e$ such that
\[
	|E_\e|
			\leq \left| 2\left(\mu_1 \e^{4/3} + \mu_2 \e^2 y + \mu_3 \e^{4/3} |W^\e(\e^{2/3}y)|^2 - \e u_{\parallel} w\right)
				 + \e^{4/3}|\nabla_x u_{\parallel}|^2 |W^\e(\e^{2/3}y)|^2\right|
\]
and
\[\begin{split}
	&\Big[\e^{4/3} |\nabla_x u_{\parallel}|^2 + 1 + 2 \left(\mu_1 \e^{4/3} + \mu_2 \e^2 y + \mu_3 \e^{4/3} |W^\e(\e^{2/3}y)|^2 - \e u_{\parallel} w\right) \Big]^{r/2}\\
&\quad	= 1
				+ r\left(\mu_1 \e^{4/3} + \mu_2 \e^2 y + \mu_3 \e^{4/3} |W^\e(\e^{2/3}y)|^2 - \e u_{\parallel} w\right)
				 + \frac{r}{2}\e^{4/3}|\nabla_x u_{\parallel}|^2|W^\e(\e^{2/3}y)|^2\\
&\qquad		- \frac{r(2-r)}{4(1 + E_\e)^{3/2}}
					\left(2\left(\mu_1 \e^{4/3} + \mu_2 \e^2 y + \mu_3 \e^{4/3} |W^\e(\e^{2/3}y)|^2 - \e u_{\parallel} w\right)
				 + \e^{4/3}|\nabla_x u_{\parallel}|^2|W^\e(\e^{2/3}y)|^2 \right)^2.
\end{split}\]
In view of $\e \|u\|_\infty \leq 1/4$, we find $|E_\e| \leq 1/2$.  Using this with the identity above, we find
\[\begin{split}
	&\Big[\e^{4/3} |\nabla_x u_{\parallel}|^2 + 1 + 2 \left(\mu_1 \e^{4/3} + \mu_2 \e^2 y + \mu_3 \e^{4/3} |W^\e(\e^{2/3}y)|^2 - \e u_{\parallel} w\right) \Big]^{r/2}\\
&\quad	 \geq 1
				+ r\left(\mu_1 \e^{4/3} + \mu_2 \e^2 y + \mu_3 \e^{4/3} |W^\e(\e^{2/3}y)|^2 - \e u_{\parallel} w\right)
				 + \frac{r}{2}\e^{4/3}|\nabla_x u_{\parallel}|^2|W^\e(\e^{2/3}y)|^2\\
&\qquad		- \frac{r(2-r)}{4(1/2)^{3/2}}
					\left(2\left(\mu_1 \e^{4/3} + \mu_2 \e^2 y + \mu_3 \e^{4/3} |W^\e(\e^{2/3}y)|^2 - \e u_{\parallel} w\right)
				 + \e^{4/3}|\nabla_x u_{\parallel}|^2|W^\e(\e^{2/3}y)|^2 \right)^2.
\end{split}\]
Inserting the last estimate into~\eqref{eq:c101} and using that $4 \cdot 2^{-3/2} \geq 1$, we find
\[\begin{split}
	- r&\frac{\e^\beta}{2} \Delta \underline \rho
		+ r\e u \cdot \nabla \underline \rho
		+ |\nabla \underline\rho|^r\\
	&\geq - r\frac{\e^\beta}{2} \left(\mu_2 \e^2 + 2\mu_3 \e^{5/3} W^\e(\e^{2/3}y) w(y) - \e u_{\parallel} w_y - \e^{2/3}\Delta_x u_{\parallel}(x) W^\e(\e^{2/3}y) \right)\\
	&\quad - r \e^{5/3} W^\e(\e^{2/3}y) u_{\perp} \cdot \nabla_x u_{\parallel}
		+ r \e u_{\parallel} w \left(1 + \mu_1 \e^{4/3} + \mu_2 \e^2 y + \mu_3 \e^{4/3} |W^\e(\e^{2/3}y)|^2 - \e u_{\parallel} w\right)\\
	&\quad +1
				+ r\left(\mu_1 \e^{4/3} + \mu_2 \e^2 y + \mu_3 \e^{4/3} |W^\e(\e^{2/3}y)|^2 - \e u_{\parallel} w\right)
				 + \frac{r}{2}\e^{4/3}|\nabla_x u_{\parallel}|^2\\
&\quad		- r(2-r)
					\left(2\left(\mu_1 \e^{4/3} + \mu_2 \e^2 y + \mu_3 \e^{4/3} |W^\e(\e^{2/3}y)|^2 - \e u_{\parallel} w\right)
				 + \e^{4/3}|\nabla_x u_{\parallel}|^2|W^\e(\e^{2/3}y)|^2 \right)^2.
\end{split}\]
As before, after rearranging terms, applying Young's inequality,  bounding terms involving $u$, and using the inequality $(a_1 + \cdots + a_k)^2 \leq k (a_1^2 + \cdots + a_k^2)$, we get, for some $C\geq 1$ depending only on $\|u\|_{C^2}$ and $\|w\|_{C^1}$,
\begin{equation}\label{eq:c23}
\begin{split}
	- r&\frac{\e^\beta}{2}  \Delta \underline \rho
		+ r\e u \cdot \nabla \underline \rho
		+ |\nabla \underline\rho|^r\\
	&\geq 1
		+ \e^{4/3} \Big[ r \mu_1
				- C(\e^{4/3} \mu_2 + 1  + \e \mu_1 + \e^{4/3} \mu_1^2)
			\Big]
		+ \e^2 y \Big[ r\mu_2
				- C(\e^{2/3}\mu_3
				+ 1
				+ \e \mu_2)
			\Big]\\
&\quad
		+ \e^{4/3} |W^\e(\e^{2/3}y)|^2 \Big[ r \mu_3
				- C(\e \mu_3 +1)
			\Big]
		- C \e^4 \mu_2^2 y^2
		- C \e^{8/3} (\mu_3^2 + 1) |W^\e(\e^{2/3}y)|^4.
\end{split}
\end{equation}

At this point, we notice that~\eqref{eq:c23} is analogous to~\eqref{eq:c22} in the proof of the lower bound.  As the rest of the proof proceeds in the exact same manner, we omit it.  We conclude that $\overline \rho \geq \rho$ in $\R^{n-1}\times\R_+$, finishing the proof.
\end{proof}

\bibliographystyle{abbrv}
\bibliography{refs}
\end{document}